\newtheorem{lemma}{Lemma}
\newtheorem*{lemma*}{Lemma}
\newtheorem{prop}{Proposition}
\newtheorem{theorem}{Theorem}
\newtheorem*{theorem*}{Theorem}
\newcommand{\dotminus}{\mathbin{\dot{-}}}
\newcommand{\dotoplus}{\mathbin{\dot{\oplus}}}
\DeclareMathOperator\unit{U}
\DeclareMathOperator\eunit{EU}
\DeclareMathOperator\stunit{StU}
\DeclareMathOperator\kunit{KU}
\DeclareMathOperator\stmap{st}
\DeclareMathOperator\diag{D}
\DeclareMathOperator\glin{GL}
\DeclareMathOperator\symp{Sp}
\DeclareMathOperator\orth{O}
\newcommand{\eps}{\varepsilon}
\newcommand{\leqt}{\trianglelefteq}
\newcommand{\inv}[1]{\!\;\overline{\!\!\:#1\vphantom !\!\!\:}\;\!}
\newcommand{\op}{{\mathrm{op}}}
\DeclareMathOperator{\id}{id}
\DeclareMathOperator{\Aut}{Aut}
\DeclareMathOperator{\Heis}{Heis}
\newcommand{\up}[2]{{^{#1}\!{#2}}}
\newcommand{\bigperp}{\mathop{\mathpalette\bigp@rp\relax}\displaylimits}
\newcommand{\bigp@rp}[2]{\vcenter{\m@th\hbox{\scalebox{\ifx#1\displaystyle2.1\else1.5\fi}{\(#1\perp\)}}}}
\newcommand{\Set}{\mathbf{Set}}
\newcommand{\Group}{\mathbf{Grp}}
\DeclareMathOperator{\Pro}{Pro}
\title{Centrality of odd unitary \(K_2\)-functor}
\author{
  Egor Voronetsky
  \thanks{Research is supported by the Russian Science Foundation grant 19-71-30002.} \\
  Chebyshev Laboratory, \\
  St. Petersburg State University, \\
  14th Line V.O., 29B, \\
  Saint Petersburg 199178 Russia \\
}
\begin{document}
\maketitle

\begin{abstract}
Let \((R, \Delta)\) be an odd form algebra. We show that the unitary Steinberg group \(\stunit(R, \Delta)\) is a crossed module over the odd unitary group \(\unit(R, \Delta)\) in two major cases: if the odd form algebra has a free orthogonal hyperbolic family satisfying a local stable rank condition and if the odd form algebra is sufficiently isotropic and quasi-finite. The proof uses only elementary localization techniques in terms of pro-groups.
\end{abstract}

\section{Introduction}

Centrality of \(\mathrm K_2\) for the general linear group over arbitrary commutative ring \(K\) was proved in \cite{AnotherPresentation} by W. van der Kallen. His proof actually shows that \(\mathrm{St}(n, K) \to \glin(n, K)\) is a crossed module for sufficiently large \(n\). In \cite{Tulenbaev} M.\,S. Tulenbaev generalized this result for all almost commutative \(K\).

In \cite{CentralityC, CentralityD, CentralityE} S. Sinchuk and A. Lavrenov proved similar results for the Chevalley groups of type \(\mathsf C_l\), \(\mathsf D_l\), and \(\mathsf E_l\). These proofs use the so-called ``another presentation'' of the Steinberg group in terms of non-elementary transvections or a reduction to the linear case. Also, S. B\"oge showed in \cite{AnotherPresentationOrth} that all sufficiently isotropic orthogonal Steinberg groups over fields not of characteristic \(2\) admit ``another presentation''.

For even unitary matrix groups in the sense of A. Bak centrality was announced in 1998 by Bak and G. Tang, but this result is unpublished. In the local case A. Stavrova proved centrality in \cite{Stavrova} for all sufficiently isotropic reductive groups, including the classical groups. Also centrality easily follows from surjective stability of \(\kunit_2\). See \cite{StabVor} and \cite{Weibo} for injective stability of \(\kunit_1\) in the odd unitary case, the proofs from these papers also give surjective stability of \(\kunit_2\).

In \cite{LinK2} we proved that \(\mathrm{St}(R) \to \glin(R)\) is a crossed module for any almost commutative ring \(R\) with a complete family of full orthogonal idempotents, i.e. for isotropic linear groups. Our proof also works for matrix algebras over rings with small local stable rank. In the isotropic case there is no natural notion of unimodular vectors, hence we cannot even formulate van der Kallen's approach in such generality. Together with Lavrenov and Sinchuk we generalized this result in \cite{ChevK2} for all simple simply connected Chevalley groups of all types with the exceptions \(\mathsf B_l\) and \(\mathsf C_l\).

The related result on normality of the elementary subgroup is known in all these cases: for Chevalley groups this was proved in \cite{Taddei} by G. Taddei, for odd unitary groups in \cite{OddDefPetrov} by V. Petrov, and for isotropic reductive groups in \cite{PetrovStavrova} by Petrov and Stavrova.

Odd unitary groups were defined in \cite{OddDefPetrov} by Petrov. These groups generalize even unitary groups and most classical groups. They are defined in terms of odd form parameters on modules with hermitian forms, so they have geometric nature but it is hard to apply algebraic constructions such as Stein's relativization or faithfully flat descent. We discovered odd form algebras in \cite{StabVor}. This objects are more flexible than Petrov's modules with odd form parameters and they still have a natural notion of unitary groups and elementary transvections.

Our goal is to generalize results from \cite{LinK2} and \cite{ChevK2} to odd unitary groups. The main result is
\begin{theorem*}
Let \(K\) be a commutative ring, \((R, \Delta)\) be an odd form \(K\)-algebra with an orthogonal hyperbolic family of rank \(n\). Suppose that \(n \geq 4\) or \(n \geq 3\) and the orthogonal hyperbolic family is strong. Suppose also that \(R\) is quasi-finite over \(K\) or \(\Lambda\mathrm{lsr}(\eta_1, R, \Delta) \leq n - 1\) and \(\mathrm{lsr}(R_{11}) \leq n - 2\). Then there is unique action of \(\unit(R, \Delta)\) on \(\stunit(R, \Delta)\) making \(\stmap\) a crossed module, it is consistent with the action of \(\stunit(R, \Delta) \rtimes \diag(R, \Delta)\).
\end{theorem*}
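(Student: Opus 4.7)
The plan is to construct the action of $\unit(R, \Delta)$ on $\stunit(R, \Delta)$ by a local-to-global argument in the category of pro-groups, in parallel with the treatment of the linear and Chevalley cases in \cite{LinK2,ChevK2}. Uniqueness comes for free from the crossed module axioms: once the action is fixed on the image of $\diag(R, \Delta)$ in $\unit(R, \Delta)$, the relation $\alpha_{\stmap(x)}(y) = xyx^{-1}$ forces the value on every element of $\stmap(\stunit(R, \Delta)) \cdot \diag(R, \Delta)$, and under the isotropy hypotheses this set generates $\unit(R, \Delta)$. So I would dispatch uniqueness by that formal observation and devote the bulk of the proof to existence.

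For existence, fix $g \in \unit(R, \Delta)$; I want a group automorphism $\alpha_g$ of $\stunit(R, \Delta)$ with $\stmap \circ \alpha_g = \mathrm{conj}_g \circ \stmap$. The orthogonal hyperbolic family of rank $n \geq 4$, or $n \geq 3$ in the strong case, decomposes both $\unit$ and $\stunit$ into ``column'' unipotent subgroups $U^\pm_i$ together with their Steinberg lifts $X^\pm_i$, so it suffices to define $\alpha_g$ on each $X^\pm_i$ and then check the Steinberg relations. Locally on a Zariski cover of $\Spec K$, the element $g$ factors through $\eunit(R, \Delta) \cdot \diag(R, \Delta)$, conjugation by $g$ on $U^\pm_i$ is computed by an explicit product of elementary transvections, and this product lifts tautologically to $\stunit$. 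Assembling these local lifts produces a canonical element of $\Pro \stunit(R, \Delta)$; this is the pro-group incarnation of the informal statement ``$\alpha_g(x)$ is locally determined by local factorisations of $g$''.

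The main obstacle is showing that the element thus defined in $\Pro \stunit(R, \Delta)$ in fact lies in $\stunit(R, \Delta)$ and is independent of the chosen local factorisation; this is the sole place where the alternative hypotheses enter. In the stable rank case, the bounds $\Lambda\mathrm{lsr}(\eta_1, R, \Delta) \leq n-1$ and $\mathrm{lsr}(R_{11}) \leq n-2$ provide explicit canonical factorisations of each $g$ into elementary unitary transvections, so the pro-element collapses to an honest element of $\stunit(R, \Delta)$, and well-definedness reduces to a finite Steinberg calculation. In the quasi-finite case one instead passes to Noetherian subalgebras of $R$ over finitely generated subalgebras of $K$, then localises at maximal ideals of the latter, and uses a faithfully flat descent argument for the functor $\stunit$ in the pro-group category to glue the local actions; the technical heart here is a commutator identity showing that two choices of local lift of $g$ differ, after conjugation, by an element that becomes trivial in every $\stunit(R_\mathfrak{m}, \Delta_\mathfrak{m})$. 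Granted this well-definedness, the crossed module axioms and the multiplicativity $\alpha_{gg'} = \alpha_g \alpha_{g'}$ each reduce to identities on Steinberg generators that follow from the defining relations, and consistency with the preexisting action of $\stunit(R, \Delta) \rtimes \diag(R, \Delta)$ is automatic because that action was used as input to the construction rather than constructed anew.
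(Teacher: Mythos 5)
Your proposal reproduces the general local--global philosophy of the paper, but both of its load-bearing steps are replaced by mechanisms that do not actually work, and the uniqueness argument is also off. On uniqueness: the crossed-module axiom only pins down \(\alpha_{\stmap(x)}\) as conjugation by \(x\), i.e.\ it determines the action of \(\eunit(R, \Delta)\); it says nothing about the action of \(\diag(R, \Delta)\), which in the theorem is a \emph{consequence} (the consistency clause), not an input. Moreover \(\eunit(R, \Delta) \cdot \diag(R, \Delta)\) need not generate \(\unit(R, \Delta)\) under these hypotheses (\(\kunit_1\) is not killed by the diagonal subgroup in general), so your generation claim fails. The correct and much simpler argument is that \(\stunit(R, \Delta) \to \eunit(R, \Delta)\) is a perfect central extension (perfectness is lemma \ref{perfect}; centrality of \(\kunit_2\) is itself the first nontrivial step of the existence proof), so an automorphism of \(\eunit(R, \Delta)\) admits at most one lift.

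On existence, two gaps. First, you assert that conjugation by a locally elementary--diagonal \(g\) on the unipotent subgroups ``lifts tautologically'' to \(\stunit\). It does not: one must show the lift is independent of the chosen factorisation of \(g\) into transvections, and this is exactly the content of the root-elimination machinery (propositions \ref{short-presentation} and \ref{ultrashort-presentation}, which show \(F_\alpha\) is an isomorphism of Steinberg \emph{pro}-groups) together with theorem \ref{local-action}, which in turn needs surjective stability of \(\kunit_1\) and \(\kunit_2\) (propositions \ref{surjective-k1}, \ref{surjective-k2}) or the Gauss decomposition (proposition \ref{gauss}). None of this is ``a finite Steinberg calculation.'' Second, your gluing step invokes ``a faithfully flat descent argument for the functor \(\stunit\) in the pro-group category''; the Steinberg group functor satisfies no such descent, and the paper deliberately avoids needing it. The actual mechanism is the formal projective limit of \emph{homotopes} \(\stunit^{(\infty, \mathfrak m)}(R, \Delta)\), which maps \emph{into} the global group \(\stunit(R, \Delta)\) (so nothing has to be descended), combined with lemma \ref{costalks} (a morphism out of \(M^{(\infty, S)}\) is determined by its restrictions to the \(M^{(\infty, \mathfrak m)}\)) and the presentation lemmas \ref{ring-presentation} and \ref{form-presentation}, which assemble the coset-valued maps \(\mathcal Y_{ij}(a) = \stmap^{-1}(\up g{T_{ij}(a)})\) into honest homomorphisms \(Y_{ij}, Y_j\) satisfying the Steinberg relations. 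Without replacing your two appeals (tautological lifting; descent for \(\stunit\)) by these arguments, the proof does not go through.
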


In particular, \(\mathrm{StO}(2l, K) \to \orth(2l, K)\), \(\mathrm{StSp}(2l, K) \to \symp(2l, K)\), and \(\mathrm{StO}(2l + 1, K) \to \orth(2l + 1, K)\) are crossed modules in a unique way for any commutative ring \(K\) and \(l \geq 3\). For \(l = 2\) there are counterexamples, see \cite{Wendt}.

Together with Lavrenov's result \cite{OddUnitaryLavrenov} that the odd unitary Steinberg group is universally closed our result gives an explicit presentation of the universal central extension of the elementary unitary group.

The paper is organized as follows. In section \(2\) we briefly recall definitions from \cite{StabVor} concerning odd form algebras. Unfortunately, there is not a natural notion for their homotopes, so we give a new definition of augmented odd form algebras in section \(3\) together with constructions of their localizations, homotopes, and odd form pro-algebras. In section \(4\) we consider hyperbolic pairs from \cite{StabVor} and corresponding subgroups of the unitary group: the opposite maximal parabolic subgroups, their common Levi subgroup, and their unipotent radicals. Section \(5\) contains definitions of odd unitary Steinberg groups and pro-groups. In section \(6\) we prove several technical lemmas in order to construct morphisms between our pro-groups. Sections \(7\)--\(9\) are devoted to the proof of ``root elimination'', i.e. that we may remove one root from the definition of the Steinberg group. In section \(10\) we construct actions of local Steinberg groups on the corresponding Steinberg pro-groups, and in the last section we prove the main result.

\section{Odd form algebras}

For group operations we use the conventions \(\up xy = x y x^{-1}\), \(x^y = y^{-1} x y\), and \([x, y] = xyx^{-1} y^{-1}\). A precrossed module is a group homomorphism \(d \colon H \to G\) with an action of \(G\) on \(H\) by automorphisms (denoted by \(\up gh\)) such that \(d(\up gh) = \up g{d(h)}\). A crossed module is a precrossed module with the property \(\up{d(h)}{h'} = \up h{h'}\). For every crossed module the kernel of \(d\) is a central subgroup of \(H\) and the image is a normal subgroup of \(G\).

We usually use the symbol \(\dotplus\) for the group operation in a \(2\)-step nilpotent group. In this case \(x \dotminus y = x \dotplus (\dotminus y)\), where \(\dotminus x\) is the inverse element, and \(\dot 0\) is the identity element. If \(\Delta_i\) are subgroups of such a group satisfying some obvious conditions, then \(\bigoplus^\cdot_i \Delta_i\) is their iterated inner semi-direct product.

All rings in this paper are associative, but not necessarily unital. All commutative rings as unital, as well as homomorphisms between them. An element \(x\) in a non-unital ring \(R\) is called quasi-invertible if it is invertible under the monoidal operation \(x \circ y = xy + x + y\). If \(R\) has an identity, then the group of quasi-invertible elements of \(R\) is isomorphic to the group \(R^*\) of invertible elements by \(x \mapsto x + 1\). By \(R^\bullet\) we denote the multiplicative semigroup of \(R\).

Recall the definitions from \cite{OddDefPetrov, StabVor}. Let \(S\) be a unital ring and \(\lambda \in S^*\). A map \(\inv{(-)} \colon S \to S\) is called a \(\lambda\)-involution if it is an anti-automorphism of \(S\) with \(\inv{\inv s} = \lambda s \lambda^{-1}\) and \(\inv \lambda = \lambda^{-1}\). An involution is the same as a \(1\)-involution, in this case we do not need that \(S\) is unital. Fix a ring \(S\) with a \(\lambda\)-involution. A hermitian form on a module \(M_S\) is a biadditive map \(B \colon M \times M \to S\) such that \(B(m, m's) = B(m, m') s\) and \(B(m', m) = \inv{B(m, m')} \lambda\).

Consider a module \(M_S\) with a hermitian form \(B\). Its Heisenberg group is the set \(\Heis(B) = M \times S\) with the group operation \((m, x) \dotplus (m', x') = (m + m', x - B(m, m') + x')\). This group has a right \(S^\bullet\)-action \((m, x) \cdot s = (ms, \inv s x s)\). An odd form parameter is an \(S^\bullet\)-subgroup \(\mathcal L \leq \Heis(B)\) such that \(\mathcal L_{\mathrm{min}} \leq \mathcal L \leq \mathcal L_{\mathrm{max}}\), where \(\mathcal L_{\mathrm{min}} = \{(0, x - \inv x \lambda) \mid x \in S\}\) and \(\mathcal L_{\mathrm{max}} = \{(m, x) \mid B(m, m) + x + \inv x \lambda = 0\}\). For any odd form parameter the map \(q \colon M \to \Heis(B) / \Lambda, m \mapsto (m, 0) \dotplus \Lambda\) is called a quadratic form. A unitary group of \(M\) consists of the automorphisms of \(M\) stabilizing both hermitian and quadratic forms. An \(S\)-module with fixed hermitian and quadratic forms is called a quadratic \(S\)-module.

It turns out that for any quadratic \(S\)-module there is a ring \(R\) with an involution and an odd form parameter \(\Delta \leq \Heis(B_R)\) (where \(B_R(a, b) = \inv ab\)) such that its unitary group is isomorphic to the unitary group of \(M\), see \cite{StabVor}. Such a pair \((R, \Delta)\) is the same as a special unital odd form ring according to the definition below if we consider \(\pi\) and \(\rho\) are the first and the second projections from \(\Delta\) to \(R\), and take \(\phi(a) = (0, a - \inv a)\).

An odd form ring is a pair \((R, \Delta)\), where \(R\) is a non-unital ring with an involution \(x \mapsto \inv x\), \(\Delta\) is a group (with the group operation \(\dotplus\)), the semi-group \(R^\bullet\) acts on \(\Delta\) from the right by endomorphisms (the action is denoted by \(u \cdot a\)), and there are fixed maps \(\phi \colon R \to \Delta\), \(\pi \colon \Delta \to R\), \(\rho \colon \Delta \to R\) such that for all \(u, v \in \Delta\) and \(a, b \in R\)
\begin{itemize}
\item \(\pi(u \dotplus v) = \pi(u) + \pi(v)\), \(\pi(u \cdot a) = \pi(u) a\);
\item \(\phi(a + b) = \phi(a) \dotplus \phi(b)\), \(\phi(b) \cdot a = \phi(\inv a b a)\);
\item \(\rho(u \dotplus v) = \rho(u) - \inv{\pi(u)} \pi(v) + \rho(v)\), \(\rho(u \cdot a) = \inv a \rho(u) a\);
\item \(\rho(u) + \inv{\rho(u)} + \inv{\pi(u)} \pi(u) = 0\);
\item \(\pi(\phi(a)) = 0\), \(\rho(\phi(a)) = a - \inv a\);
\item \([u, v] = \phi(-\inv{\pi(u)} \pi(v))\);
\item \(\phi(a) = \dot 0\) if \(a = \inv a\);
\item \(u \cdot (a + b) = u \cdot a \dotplus \phi(\inv{\,b\,} \rho(u) a) \dotplus u \cdot b\).
\end{itemize}
It follows that \(\Delta\) is \(2\)-step nilpotent (\(\phi(R) \leqt \Delta\) is a central subgroup), \(\rho(\dot 0) = 0\), \(\rho(\dotminus u) = \inv{\rho(u)}\), and \(u \cdot 0 = \dot 0\). An odd form ring is called unital if \(R\) is unital and \(u \cdot 1 = u\) for all \(u \in \Delta\). An odd form ring is called special if \((\pi, \rho) \colon \Delta \to R \times R\) is injective.

A unitary group of an odd form ring is a certain subset of \(R \times \Delta\). We denote the compotents of its elements \(g\) by \(\beta(g) \in R\) and \(\gamma(g) \in \Delta\), so \(g = (\beta(g), \gamma(g))\). Also let \(\alpha(g) = \beta(g) + 1 \in R \rtimes K\). A unitary group is
\[\unit(R, \Delta) = \{g \in R \times \Delta \mid \alpha(g)^{-1} = \inv{\alpha(g)}, \pi(\gamma(g)) = \beta(g), \rho(\gamma(g)) = \inv{\beta(g)}\}.\]
The group operation is given by \(\alpha(gg') = \alpha(g)\, \alpha(g')\) and \(\gamma(gg') = \gamma(g) \cdot \alpha(g') \dotplus \gamma(g')\). In the special unital case \(\unit(R, \Delta) \cong \{g \in R^* \mid g^{-1} = \inv g, (g - 1, \inv g - 1) \in \Delta\}\), so this definition is consistent with unitary groups of quadratic modules. The unitary group acts on \((R, \Delta)\) by automorphisms in the following way:
\[\up g a = \alpha(g)\, a\, \inv{\alpha(g)}, \quad \up g u = (\gamma(g) \cdot \pi(u) \dotplus u) \cdot \inv{\alpha(g)}.\]

A morphism \(f \colon (R, \Delta) \to (S, \Theta)\) of odd form rings consists of maps \(f \colon R \to S\) and \(f \colon \Delta \to \Theta\) preserving all operations. An odd form ideal in an odd form ring \((R, \Delta)\) is a pair \((I, \Gamma)\) such that \(I = \inv{\,I\,} \leqt R\) is an ideal, \(\Gamma \leqt \Delta\) is an \(R^\bullet\)-subgroup, and \(\Delta \cdot I \dotplus \phi(\{a \in R \mid a - \inv a \in I\}) \leq \Gamma \leq \{u \in \Delta \mid \pi(u), \rho(u) \in I\}\). If \((I, \Gamma) \leqt (R, \Delta)\), then \((R / I, \Delta / \Gamma)\) is also an odd form ring. We say that the sequence \((I, \Gamma) \to (R, \Delta) \to (R / I, \Delta / \Gamma)\) is short exact.

An odd form ring \((R, \Delta)\) acts on an odd form ring \((S, \Theta)\) if there are multiplication maps \(R \times S \to S\), \(S \times R \to S\), \(\Theta \times R \to \Theta\), and \(\Delta \times S \to \Theta\) such that for all \(a, a' \in R\); \(b, b' \in S\); \(u, u' \in \Delta\); and \(v, v' \in \Theta\)
\begin{itemize}
\item \(ab = \inv{\,b\,} \inv a\);
\item \((a + a') b = ab + a'b\), \(a(b + b') = ab + ab'\);
\item \((aa') b = a(a'b)\), \((ab) b' = a(bb')\), \((ab)a' = a(ba')\), \((ba)b' = b(ab')\);
\item \((u \dotplus u') \cdot b = u \cdot b \dotplus u' \cdot b\), \(u \cdot (b + b') = u \cdot b \dotplus \phi(\inv{\,b'\,} \rho(u) b) \dotplus u \cdot b'\);
\item \((v \dotplus v') \cdot a = v \cdot a \dotplus v' \cdot a\), \(v \cdot (a + a') = v \cdot a \dotplus \phi(\inv{a'} \rho(v) a) \dotplus v \cdot a'\);
\item \((u \cdot a) \cdot b = u \cdot ab\), \((u \cdot b) \cdot a = u \cdot ba\), \((u \cdot b) \cdot b' = u \cdot bb'\);
\item \((v \cdot a) \cdot b = v \cdot ab\), \((v \cdot b) \cdot a = v \cdot ba\), \((v \cdot a) \cdot a' = v \cdot aa'\);
\item \(\phi(a) \cdot b = \phi(\inv{\,b\,} a b)\), \(\phi(b) \cdot a = \phi(\inv a b a)\);
\item \(\pi(u \cdot b) = \pi(u) b\), \(\pi(v \cdot a) = \pi(v) a\);
\item \(\rho(u \cdot b) = \inv{\,b\,} \rho(u) b\), \(\rho(v \cdot a) = \inv a \rho(v) a\).
\end{itemize}
An action of a unital odd form ring \((R, \Delta)\) on an odd form ring \((S, \Theta)\) is called unital if \(b1 = b = 1b\) for all \(b \in S\) and \(v \cdot 1 = v\) for all \(v \in \Theta\).

Let \(K\) be a commutative ring. Odd form \(K\)-algebras from \cite{StabVor} are the same as odd form rings \((R, \Delta)\) with a unital action of the odd form ring \((K, \dot 0)\) (with the trivial involution on \(K\)) such that \(ak = ka\) for all \(a \in R\), \(k \in K\). Every odd form ring is an odd form \(\mathbb Z\)-algebra with a uniquely determined multiplication \(\Delta \times \mathbb Z \to \Delta\).

\begin{lemma}\label{ofa-semidirect}
If an odd form ring \((R, \Delta)\) acts on \((S, \Theta)\), then \((S \rtimes R, \Theta \rtimes \Delta)\) is also an odd form ring, where \(S \rtimes R = S \times R\) and \(\Theta \rtimes \Delta = \Theta \times \Delta\) as sets, the operations are
\begin{itemize}
\item \((b, a) + (b', a') = (b + b', a + a')\), \((b, a) (b', a') = (bb' + ab' + ba', aa')\), \(\inv{(b, a)} = (\inv{\,b\,}, \inv a)\);
\item \((v, u) \cdot (b, a) = \bigl(v \cdot b + v \cdot a + u \cdot b + \phi\bigl(\inv a \rho(v) b + \inv a \rho(u) b\bigr), u \cdot a\bigr)\);
\item \(\phi(b, a) = (\phi(b), \phi(a))\), \(\pi(v, u) = (\pi(v), \pi(u))\), \(\rho(v, u) = \bigl(\rho(v) - \inv{\pi(v)} \pi(u), \rho(u) \bigr)\)
\end{itemize}
for \(a, a' \in R\); \(b, b' \in S\); \(u, u' \in \Delta\); \(v, v' \in \Theta\). The sequence \((S, \Theta) \to (S \rtimes R, \Theta \rtimes \Delta) \to (R, \Delta)\) is short exact. If the action is unital, then the semi-direct product of odd form rings is also unital.

A short exact sequence \((I, \Gamma) \to (R, \Delta) \to (S, \Theta)\) splits (i.e. there is a section \((S, \Theta) \to (R, \Delta)\)) if and only if \((R, \Delta) \cong (I \rtimes S, \Gamma \rtimes \Theta)\) as an extension of odd form rings.
\end{lemma}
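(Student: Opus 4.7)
The plan is a direct verification of the eight odd form ring axioms for $(S \rtimes R, \Theta \rtimes \Delta)$, with the action data of $(R, \Delta)$ on $(S, \Theta)$ supplying every compatibility. First I would check the ring and involution structure on $S \rtimes R$. Associativity of the multiplication $(b, a)(b', a') = (bb' + ab' + ba', aa')$ reduces term by term to the four bimodule identities $(aa')b = a(a'b)$, $(ab)b' = a(bb')$, $(ab)a' = a(ba')$, $(ba)b' = b(ab')$; the formula $\inv{(b, a)} = (\inv{\,b\,}, \inv a)$ is an anti-involution thanks to $ab = \inv{\,b\,}\inv a$ and the given involutions on $R$ and $S$. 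In the unital case $(0, 1_R)$ is a two-sided identity.

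Next I would verify that $\Theta \rtimes \Delta$ is a $2$-step nilpotent group with the displayed $\pi$, $\rho$, $\phi$. Requiring that $\rho$ take the stated form and satisfy $\rho(w \dotplus w') = \rho(w) - \inv{\pi(w)}\pi(w') + \rho(w')$ essentially forces the group law
\[
(v, u) \dotplus (v', u') = \bigl(v \dotplus v' \dotplus \phi(-\inv{\pi(v)}\pi(u')),\; u \dotplus u'\bigr),
\]
and a componentwise check using the axioms for $\Delta$ and $\Theta$ yields associativity, the commutator identity, and centrality of $\phi(S \rtimes R)$. The right action of $(S \rtimes R)^\bullet$ on $\Theta \rtimes \Delta$ by the displayed formula is well-defined by the mixed identities $\pi(v \cdot a) = \pi(v) a$, $\pi(u \cdot b) = \pi(u) b$, $\rho(v \cdot a) = \inv a \rho(v) a$, $\rho(u \cdot b) = \inv{\,b\,}\rho(u) b$.

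I would then run through the eight axioms. Those involving only $\pi$, $\rho$, $\phi$, the additive structure, and the group law reduce to a componentwise check; the commutator formula and the identities $\pi \circ \phi = 0$, $\rho \circ \phi = \id - \inv{(\,\cdot\,)}$ transfer directly. The genuinely computational step is the bilinearity
\[
w \cdot (x + y) = w \cdot x \dotplus \phi\bigl(\inv y\, \rho(w)\, x\bigr) \dotplus w \cdot y
\]
for $w = (v, u)$, $x = (b, a)$, $y = (b', a')$. One expands both sides using the action formula, cancels matching $\phi$-terms, and matches the remainder using $(u \cdot a) \cdot b = u \cdot ab$, $(u \cdot b) \cdot a = u \cdot ba$, $(v \cdot a) \cdot b = v \cdot ab$, $(v \cdot b) \cdot a = v \cdot ba$, together with the bilinearity of $\cdot$ in $\Delta$ and in $\Theta$ separately. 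This bookkeeping is the main obstacle: no conceptual difficulty, but sign conventions and the placement of the $\phi$-corrections (from both the group law on $\Theta \rtimes \Delta$ and the action axiom) must be tracked carefully.

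Finally, the short exact sequence is verified by inspecting that inclusion into the first coordinate and projection onto the second preserve all structure maps, and that the kernel of the projection is precisely the image of the inclusion. For the splitting characterization, given a section $s \colon (S, \Theta) \to (R, \Delta)$ of $(I, \Gamma) \to (R, \Delta) \to (S, \Theta)$, the restrictions of the ring multiplication and $\Delta$-action of $(R, \Delta)$ to $(I, \Gamma)$ and to $s(S, \Theta)$ define an action of $(S, \Theta)$ on $(I, \Gamma)$ satisfying all required axioms; the map $(I \rtimes S, \Gamma \rtimes \Theta) \to (R, \Delta)$ sending $(i, b)$ to $i + s(b)$ and $(\gamma, v)$ to $\gamma \dotplus s(v)$ is then an isomorphism of extensions, and the converse direction is the first part of the lemma.
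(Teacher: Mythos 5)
Your plan is correct in outline but takes a genuinely different route from the paper. The paper does not verify the axioms for \((S \rtimes R, \Theta \rtimes \Delta)\) componentwise: it passes to the universal situation where \((R, \Delta)\) is a free odd form ring and \((S, \Theta)\) is the free odd form ring equipped with an action of \((R, \Delta)\), identifies the semi-direct product as a pair of sets with the free odd form ring on the combined generators via proposition 1 of \cite{StabVor}, and concludes that the axioms hold in general because they hold in the free object; only the exactness and splitting claims are left to direct computation. Your componentwise verification is self-contained and does not lean on that external presentation result, at the price of the bookkeeping you describe; your treatment of the splitting statement coincides with what the paper leaves to the reader and is fine.

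One concrete correction to the bookkeeping, since it is load-bearing for the rest of your verification: the twist in the group law on \(\Theta \rtimes \Delta\) is not \(\phi\bigl(-\inv{\pi(v)}\pi(u')\bigr)\). Forcing \(\rho(w \dotplus w') = \rho(w) - \inv{\pi(w)}\pi(w') + \rho(w')\) with \(\rho(v, u) = \bigl(\rho(v) - \inv{\pi(v)}\pi(u), \rho(u)\bigr)\) and the stated multiplication on \(S \rtimes R\) gives
\[
(v, u) \dotplus (v', u') = \bigl(v \dotplus v' \dotplus \phi\bigl(\inv{\pi(v')}\,\pi(u)\bigr),\; u \dotplus u'\bigr),
\]
that is, the correction pairs the \(\Theta\)-component of the \emph{second} summand with the \(\Delta\)-component of the \emph{first}. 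With your version the additivity axiom for \(\rho\) fails: the two candidate corrections differ by \(\inv{\pi(v')}\pi(u) + \inv{\pi(v)}\pi(u')\), which is not hermitian in general, so the corresponding \(\phi\)-values need not agree. This is exactly the kind of slip your own caveat anticipates and does not affect the viability of the approach, but it must be fixed before the remaining axioms (in particular the commutator identity and the biadditivity of the action) can be checked.
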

\begin{proof}
Consider a universal situation, where \((R, \Delta)\) is a free odd form ring and \((S, \Theta)\) is a free odd form ring with an action of \((R, \Delta)\) (it is not necessarily a free abstract odd form ring). The axioms on action ensure that the semi-direct product as a pair of sets is exactly the free odd form ring generated by all generators of \((R, \Delta)\) and \((S, \Theta)\), see proposition \(1\) in \cite{StabVor}. Hence the semi-direct product is always an odd form ring. All other claims follow from direct computations.
\end{proof}

\section{Localization and homotopes}

We use the notation from \cite{ChevK2, LinK2}. Let \(\mathcal C\) be a category. Objects in its pro-completion \(\Pro(\mathcal C)\) are written with an upper index \((\infty)\), such as \(X^{(\infty)}\). Projective limits in \(\Pro(\mathcal C)\) are denoted by \(\varprojlim^{\Pro}\). There is a ``forgetful'' functor from the category of pro-groups \(\Pro(\Group)\) to the category of group objects in the category of pro-sets \(\Pro(\Set)\). This functor is fully faithful, so we identify pro-groups with the corresponding pro-sets. But this functor is not an equivalence: it is easy to see that the ``local group'' \(I^{(\infty)} = \varprojlim_{n \in \mathbb N}^{\Pro} (-\frac 1n, \frac 1n)\), where \((-\frac 1n, \frac 1n) \subseteq \mathbb R\) are intervals and the group operation is given by
\[\textstyle(-\frac 1{2n}, \frac 1{2n}) \times (-\frac 1{2n}, \frac 1{2n}) \to (\frac 1n, \frac 1n), (a, b) \mapsto a + b,\]
is not isomorphic to any pro-group. Moreover, any morphism of group objects from a pro-group \(G^{(\infty)}\) to \(I^{(\infty)}\) is trivial.

The category of pro-sets \(\Pro(\Set)\) has all finite limits, so we may consider algebraic objects in \(\Pro(\Set)\) such as groups and odd form algebras. Every algebraic formula (say, the commutator) defines a morphism in \(\Pro(\Set)\) from a product of such algebraic objects to an algebraic objects. We denote the variable in such formulas with an upper index \((\infty)\). If \(a^{(\infty)}\) is a variable of such a formula, then \(a^{(\infty)} \in X^{(\infty)}\) means that \(X^{(\infty)}\) is the domain of \(a\). For example, \([g^{(\infty)}, h^{(\infty)}]\) for \(g^{(\infty)}, h^{(\infty)} \in G^{(\infty)}\) means the commutator morphism \(G^{(\infty)} \times G^{(\infty)} \to G^{(\infty)}\) for a group object \(G^{(\infty)}\) in \(\Pro(\Set)\).

If \(G^{(\infty)}\) is a group object in \(\Pro(\Set)\) and there is an action \(H \times G^{(\infty)} \to G^{(\infty)}\) by automorphisms (it is not the same as a homomorphism \(H \to \Aut(G^{(\infty)})\) of abstract groups), then their semi-direct product \(G^{(\infty)} \rtimes H\) is also a group object, as a pro-set it is just \(G^{(\infty)} \times H\). The same is true for actions of abstract odd form rings on odd form rings in \(\Pro(\Set)\).

It is relatively easy do define a localization of an odd form \(K\)-algebra \((R, \Delta)\) with respect to a multuplicative subset \(S\). But we also need a ``dual'' construction, the formal projective limit of homotopes of \((R, \Delta)\). In order to define homotopes of the odd form parameter, we need a new kind of objects.

Let \(K\) be a commutative ring. A \(2\)-step nilpotent \(K\)-module is a pair \((M, M_0)\), where \(M\) is a group with a right \(K^\bullet\)-action (the group operation is denoted by \(\dotplus\) and the action by \(m \cdot k\)), \(M_0\) is a subgroup of \(M\) and a left \(K\)-module, and there is a map \(\tau \colon M \to M_0\) such that
\begin{itemize}
\item \([M, M] \leq M_0\), \([M, M_0] = \dot 0\);
\item \([m \cdot k, m' \cdot k'] = kk' [m, m]\);
\item \(m \cdot (k + k') = m \cdot k \dotplus kk' \tau(m) \dotplus m \cdot k'\);
\item \(m \cdot k = k^2 m\) for \(m \in M_0\).
\end{itemize}
In every \(2\)-step nilpotent \(K\)-module \((M, M_0)\) there are identities \(\tau(m) = m \dotplus m \cdot (-1)\) (in particular, \(\tau(m) = 2m\) for \(m \in M_0\)), \(\tau(m \cdot k) = k^2 \tau(m)\), \(\tau(m \dotplus m') = \tau(m) + [m, m'] + \tau(m')\). Also, \(M / M_0\) is a right \(K\)-module. Morphisms of \(2\)-step nilpotent \(K\)-modules \(f \colon (M, M_0) \to (N, N_0)\) are the homomorphisms of groups \(f \colon M \to N\) mapping \(M_0\) to \(N_0\) and preserving all operations.

An augmented odd form \(K\)-algebra is a triple \((R, \Delta, \mathcal D)\) such that \((R, \Delta)\) is an odd form \(K\)-algebra and \(\mathcal D \leq \Delta\) is an \(R^\bullet\)-subgroup and a left \(K\)-module such that for all \(a \in R\), \(k \in K\), \(v \in \mathcal D\)
\begin{itemize}
\item \(\phi(a) \in \mathcal D\), \(\phi(ka) = k \phi(a)\);
\item \(\pi(v) = 0\), \(v \cdot k = k^2 v\), \(\rho(kv) = k \rho(v)\), \(kv \cdot a = k (v \cdot a)\).
\end{itemize}
Then \((\Delta, \mathcal D)\) is a \(2\)-step nilpotent \(K\)-module and the action of \(\unit(R, \Delta)\) preserves the augmentation. Every odd form \(K\)-algebra \((R, \Delta)\) has the smallest augmentation \(\mathcal D = \phi(R)\).

Let \(S \subseteq K\) be a multiplicative subset. We usually use the index category \(\mathcal S\) in formal projective limits, its objects are elements of \(S\), and \(\mathcal S(s, s') = \{s'' \in S \mid s' = s s''\}\). For every \(K\)-module \(M\) let \(M^{(s)} = \{m^{(s)} \mid m \in M\}\), it is isomorphic to \(M\) as a \(K\)-module. Let also \(M^{(\infty, S)}\) be their formal projective limit under the maps \(m^{(ss')} \mapsto (s'm)^{(s)}\). Clearly, \(M^{(\infty, S)}\) is a \(K\)-module in \(\Pro(\Set)\) and a pro-group under addition. Every bilinear map \(f \colon M \times N \to L\) gives a bilinear morphism of pro-sets \(f \colon M^{(\infty, S)} \times N^{(\infty, S)} \to L^{(\infty, S)}, (m^{(s)}, n^{(s)}) \mapsto (sf(m, n))^{(s)}\), this construction preserves various associativity conditions. For example, if \(R\) is a \(K\)-algebra, then \(R^{(\infty, S)}\) is a non-unital \(K\)-algebra in \(\Pro(\Set)\). It is the formal projective limit of algebras \(R^{(s)}\), they are called homotopes of \(R\).

Take a \(2\)-step nilpotent \(K\)-module \((M, M_0)\). Its localization \((S^{-1} M, S^{-1} M_0)\) is a \(2\)-step nilpotent \(S^{-1} K\)-module, where \(S^{-1} M\) is the factor-set of \(S \times M\) by the following equivalence relation: \((s, m) \sim (s', m')\) if and only if there is \(s'' \in S\) such that \(m \cdot s's'' = m' \cdot ss''\). We write a pair \((s, m)\) as \(m \cdot \frac 1s\). The embedding \(S^{-1} M_0 \to S^{-1} M\) is \(\frac ms \mapsto sm \cdot \frac 1s\), the operations on \(S^{-1} M\) are given by
\begin{itemize}
\item \(m \cdot \frac 1s \dotplus m' \cdot \frac 1{s'} = (m \cdot s' \dotplus m' \cdot s) \cdot \frac 1{ss'}\), \((m \cdot \frac 1s) \cdot \frac k{s'} = (m \cdot k) \cdot \frac 1{ss'}\);
\item \([m \cdot \frac 1s, m' \cdot \frac 1{s'}] = \frac{[m, m']}{ss'}\), \(\tau(m \cdot \frac 1s) = \frac{\tau(m)}{s^2}\).
\end{itemize}

Clearly, \((M, M_0) \to (S^{-1} M, S^{-1} M_0), m \mapsto m \cdot \frac 11\) is a morphism of \(2\)-step nilpotent \(K\)-modules and \(S^{-1} M / S^{-1} M_0 \cong S^{-1}(M / M_0)\) as \(S^{-1} K\)-modules.

Now we define homotopes of \((M, M_0)\). Let \(M^{(s)}\) be the group generated by the group \(M_0^{(s)}\) and the elements \(m^{(s)}\) for all \(m \in M\), the map \(M_0^{(s)} \to M^{(s)}\) is denoted by \(\iota\) (so we distinguish the generators \(\iota(m^{(s)})\) and \(m^{(s)}\) for \(m \in M_0\)). The relations are \(m^{(s)} \dotplus m'^{(s)} = (m \dotplus m')^{(s)}\) for all \(m, m'\) and \(m^{(s)} = \iota((sm)^{(s)})\) for \(m \in M_0\). It is easy to see that the sequence \(M_0^{(s)} \to M^{(s)} \to (M / M_0)^{(s)}\) is short exact, where the left map is \(\iota\) and the right map is \(m^{(s)} \mapsto (m + M_0)^{(s)}, \iota(m^{(s)}) \mapsto 0\). Also \((M^{(s)}, M_0^{(s)})\) is a \(2\)-step nilpotent \(K\)-module with
\begin{itemize}
\item \(m^{(s)} \cdot k = (mk)^{(s)}\), \(\iota({m'}^{(s)}) \cdot k = \iota((k^2 m')^{(s)})\);
\item \(\tau(m^{(s)}) = (s \tau(m))^{(s)}\), \(\tau(\iota({m'}^{(s)})) = (2m')^{(s)}\) as elements of \(M_0^{(s)}\);
\item \([m^{(s)}, {m'}^{(s)}] = (s [m, m])^{(s)}\) as elements of \(M_0^{(s)}\).
\end{itemize}

The pro-group \(M^{(\infty, S)}\) is the formal projective limit of \(M^{(s)}\) under the morphisms \(M^{(ss')} \to M^{(s)}, m^{(ss')} \mapsto (m \cdot s')^{(s)}, \iota(m^{(ss')}) \mapsto \iota((s'm)^{(s)})\) of \(2\)-step nilpotent \(K\)-modules. Hence \(M_0^{(\infty, S)} \to M^{(\infty, S)} \to (M / M_0)^{(\infty, S)}\) is a short exact sequence of pro-groups in the following sense: the left morphism is the kernel of the right one, the right morphism is an epimorphism of pro-sets, and any morphism \(f \colon M^{(\infty)} \to X^{(\infty)}\) of pro-sets factors through \((M / M_0)^{(\infty)}\) if and only if \(f\bigl(m^{(\infty)} \dotplus \iota({m'}^{(\infty)})\bigr) = f(m^{(\infty)})\) for \(m^{(\infty)} \in M^{(\infty)}\), \({m'}^{(\infty)} \in M_0^{(\infty)}\) (this is an equality between morphisms \(M^{(\infty)} \times M_0^{(\infty)} \to X^{(\infty)}\)). The morphisms \((-) \cdot (=) \colon M^{(\infty)} \times K \to M^{(\infty)}\), \([-, =] \colon (M / M_0)^{(\infty)} \times (M / M_0)^{(\infty)} \to M_0^{(\infty)}\), \(\tau \colon M^{(\infty)} \to M_0^{(\infty)}\) of pro-sets satisfying the axioms of \(2\)-step nilpotent \(K\)-modules.

Let \((R, \Delta, \mathcal D)\) be an augmented odd form \(K\)-algebra and \(S \subseteq K\) be a multiplicative subset. We have the triples \((S^{-1} R, S^{-1} \Delta, S^{-1} \mathcal D)\) and \((R^{(s)}, \Delta^{(s)}, \mathcal D^{(s)})\) for \(s \in S\) and \(s = \infty\) constructed using the corresponding operations for the \(2\)-step nilpotent \(K\)-module \((\Delta, \mathcal D)\). It is easy to see that the localization \((S^{-1} R, S^{-1} \Delta, S^{-1} \mathcal D)\) is an augmented odd form \(S^{-1} K\)-algebra with the operations \((u \cdot \frac 1s) \cdot \frac a{s'} = (u \cdot a) \cdot \frac 1{ss'}\), \(\phi(\frac as) = \frac{\phi(a)}s\), \(\pi(u \cdot \frac 1s) = \frac{\pi(u)}s\), and \(\rho(u \cdot \frac 1s) = \frac{\rho(u)}{s^2}\). Also, the odd form \(S^{-1} K\)-algebra \((S^{-1} R, S^{-1} \Delta)\) is independent on \(\mathcal D\).

\begin{lemma}\label{ofa-homotopes}
Each homotope \((R^{(s)}, \Delta^{(s)}, \mathcal D^{(s)})\) for \(s \in S\) is an augmented odd form \(K\)-algebra, the operations are given for \(a \in R\), \(u \in \Delta\), \(v \in \mathcal D\) by
\begin{itemize}
\item \(\inv{a^{(s)}} = \inv a^{(s)}\);
\item \(u^{(s)} \cdot a^{(s)} = (u \cdot sa)^{(s)} \in \Delta^{(s)}\), \(v^{(s)} \cdot a^{(s)} = (v \cdot sa)^{(s)} \in \mathcal D^{(s)}\);
\item \(\phi(a^{(s)}) = \phi(a)^{(s)} \in \mathcal D^{(s)}\), \(\pi(u^{(s)}) = \pi(u)^{(s)}\);
\item \(\rho(u^{(s)}) = (s\rho(u))^{(s)}\), \(\rho(\iota(v^{(s)})) = \rho(v)^{(s)}\).
\end{itemize}
The odd form pro-ring \((R^{(\infty, S)}, \Delta^{(\infty, S)})\) is independent on \(\mathcal D\) up to a canonical isomorphism. There is a well-defined action of the abstract odd form ring \((S^{-1} R \rtimes S^{-1} K, S^{-1} \Delta)\) on this odd form pro-ring given by
\begin{itemize}
\item \(a^{(ss')} \frac b{s'} = (ab)^{(s)}\), \(\frac b{s'} a^{(ss')} = (ba)^{(s)}\);
\item \(u^{(ss'^2)} \cdot \frac b{s'} = (u \cdot s'b)^{(s)} \in \Delta^{(s)}\), \(v^{(ss'^2)} \cdot \frac b{s'} = (v \cdot b)^{(s)} \in \mathcal D^{(s)}\);
\item \((w \cdot \frac 1{s'}) \cdot a^{(ss')} = (w \cdot a)^{(s)}\)
\end{itemize}
for \(a, b \in R\), \(u, w \in \Delta\), and \(v \in D\) in the following sense: every element of \(S^{-1} R\) and \(S^{-1} \Delta\) gives morphisms of pro-sets satisfying the axioms. Moreover, \((R \rtimes K, \Delta)\) acts on each \((R^{(s)}, \Delta^{(s)})\).
\end{lemma}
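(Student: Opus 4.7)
The lemma consists of three claims, each amounting to an axiom check. As in the proof of Lemma \ref{ofa-semidirect}, the overall strategy is to reduce every identity to one already known in $(R, \Delta, \mathcal D)$ by keeping careful track of the scaling factor that the notation $a^{(s)}$ introduces; one may formulate this uniformly via a universal free construction in which all the relevant operations are tautologies on generators. The arithmetic of the indices ($s$, $ss'$, $ss'^2$) is dictated by the fact that scaling is linear on $R$ and on $\Delta / \mathcal D$ but quadratic on $\mathcal D$, which also accounts for the two flavours of generator, $u^{(s)}$ versus $\iota(v^{(s)})$.

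For the first claim, the ring $R^{(s)}$ with product $a^{(s)} b^{(s)} = (sab)^{(s)}$ and inherited involution is standard, and the $2$-step nilpotent $K$-module structure on $(\Delta^{(s)}, \mathcal D^{(s)})$ is the homotope construction for $2$-step nilpotent $K$-modules already built earlier in this section. Each odd form ring axiom for $(R^{(s)}, \Delta^{(s)})$ translates, after substituting the displayed formulas on generators, to the corresponding axiom in $(R, \Delta)$ multiplied by $s$: for instance, $\rho(u^{(s)} \dotplus v^{(s)}) = \rho(u^{(s)}) - \inv{\pi(u^{(s)})} \pi(v^{(s)}) + \rho(v^{(s)})$ unwinds, via $u^{(s)} \dotplus v^{(s)} = (u \dotplus v)^{(s)}$ and $\inv a^{(s)} b^{(s)} = (s\inv a b)^{(s)}$, to the original axiom scaled by $s$. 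The mixed axiom $u \cdot (a+b) = u \cdot a \dotplus \phi(\inv b \rho(u) a) \dotplus u \cdot b$ is the only one with a non-trivial $\phi$-correction; splitting generators into $u^{(s)}$ and $\iota(v^{(s)})$ and using $\rho(u^{(s)}) = (s\rho(u))^{(s)}$ versus $\rho(\iota(v^{(s)})) = \rho(v)^{(s)}$, the correction terms balance on the nose. The augmentation axioms for $\mathcal D^{(s)}$ are immediate from $\pi(\mathcal D) = 0$ and the $K$-linearity of $\mathcal D$.

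For the second claim, given augmentations $\mathcal D \subseteq \mathcal D'$ the inclusion induces a natural morphism $\Delta_{\mathcal D}^{(s)} \to \Delta_{\mathcal D'}^{(s)}$ compatible with the transition maps; in the pro-limit the additional identifications that enlarging $\mathcal D$ imposes are already implemented by the transition maps, because the factor $s'$ in $m^{(ss')} \mapsto (m \cdot s')^{(s)}$ converts any new $\mathcal D'$-relation into an instance of an old relation in $\Delta_{\mathcal D}^{(s)}$. Since the odd form pro-ring operations $\phi$, $\pi$, $\rho$ and the $R^\bullet$-action on $\Delta^{(\infty, S)}$ are given by formulas that make no essential reference to $\mathcal D$, the resulting abstract odd form pro-ring is canonically $\mathcal D$-independent.

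For the third claim, each displayed formula must be shown to be a well-defined morphism in $\Pro(\Set)$---that is, independent of the representative $\frac{b}{s'}$ (replacing $s'$ by $s's''$ and $b$ by $s''b$ gives the same image after advancing by one step of the relevant transition map) and compatible with the transition maps of the target pro-object. Once well-definedness is settled, the action axioms are verified by the same scaling computation as in the first claim, and the closing statement about $(R \rtimes K, \Delta)$ acting on each $(R^{(s)}, \Delta^{(s)})$ is the special case $s' = 1$. The main obstacle throughout is purely combinatorial bookkeeping: tracking powers of $s$ and $s'$ and treating separately the generators $u^{(s)}$ and $\iota(v^{(s)})$ of $\Delta^{(s)}$; no conceptual difficulty arises.
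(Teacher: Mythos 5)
Your treatment of the first and third claims matches the paper's, which likewise disposes of them with the remark that all axioms may be checked by direct calculation; your extra detail on the scaling of \(\rho\) and on the \(\phi\)-correction term is consistent with what such a check produces. The problem is the second claim (\(\mathcal D\)-independence of \((R^{(\infty,S)},\Delta^{(\infty,S)})\)), where your route differs from the paper's and, as written, has a real gap. The paper introduces an auxiliary pro-set \(\widetilde\Delta^{(s)} = \{u^{(s)} \mid u \in \Delta\}\) with transition maps \(u^{(ss')} \mapsto (u \cdot s')^{(s)}\), which makes no reference to \(\mathcal D\), and then writes down an explicit isomorphism \(\Delta^{(\infty,S)} \cong \widetilde\Delta^{(\infty,S)}\) whose inverse is \(\iota(v^{(s^2)}) \mapsto v^{(s)}\), \(u^{(s^2)} \mapsto (u \cdot s)^{(s)}\). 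You instead compare nested augmentations \(\mathcal D \subseteq \mathcal D'\) and assert that the additional identifications imposed by enlarging \(\mathcal D\) are ``already implemented by the transition maps.'' Taken literally this fails: the new relation \(v^{(ss')} = \iota((ss'v)^{(ss')})\) for \(v \in \mathcal D' \setminus \mathcal D\) involves a generator \(\iota((ss'v)^{(ss')})\) that does not exist in \(\Delta^{(ss')}_{\mathcal D}\) at all, so no transition map of \(\Delta_{\mathcal D}\) can ``convert'' it into an old relation. What actually has to be done is to produce the inverse pro-morphism, i.e.\ a compatible family \(\Delta^{(s^2)}_{\mathcal D'} \to \Delta^{(s)}_{\mathcal D}\) sending \(u^{(s^2)} \mapsto (u \cdot s)^{(s)}\) and \(\iota(v^{(s^2)}) \mapsto v^{(s)}\) (the extra \(\iota\)-generators go to generators of \(\Delta\)-type at a lower level); its well-definedness rests on the identity \(v \cdot s = s^2 v\) for \(v \in \mathcal D'\) together with the specific index shift \(s \mapsto s^2\), neither of which appears in your sketch. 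This is exactly the content of the paper's \(\widetilde\Delta\) construction, so the missing computation is the heart of the claim rather than bookkeeping.

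Two smaller points in the same part of the argument. Arbitrary augmentations need not be nested, so your comparison has to be routed through the minimal augmentation \(\phi(R)\), which every \(\mathcal D\) contains. And the statement that the operations ``make no essential reference to \(\mathcal D\)'' is not automatic: \(\rho\) is given by different formulas on the two kinds of generators (\(\rho(u^{(s)}) = (s\rho(u))^{(s)}\) versus \(\rho(\iota(v^{(s)})) = \rho(v)^{(s)}\)), so one must also check that the comparison isomorphism intertwines them. With the explicit inverse above this is a short verification, but it is part of what the lemma asserts.
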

\begin{proof}
All axioms may be checked by direct calculations. To show that \((R^{(\infty, S)}, \Delta^{(\infty, S)})\) is independent on \(\mathcal D\), consider pro-sets \(\widetilde \Delta^{(s)} = \{u^{(s)} \mid u \in \Delta\}\) and their formal projective limit \(\widetilde \Delta^{(\infty, S)}\) under the maps
\[\widetilde \Delta^{(ss')} \to \widetilde \Delta^{(s)}, u^{(ss')} \mapsto (u \cdot s')^{(s)}.\]
The operations on \((R^{(\infty, S)}, \widetilde \Delta^{(\infty, S)})\) are given by
\begin{itemize}
\item \(u^{(s)} \dotplus v^{(s)} = (u \dotplus v)^{(s)}\), \(u^{(s)} \cdot a^{(s)} = (u \cdot a)^{(s)}\);
\item \(\rho(u^{(s)}) = (s \rho(u))^{(s)}\), \(\pi(u^{(s)}) = \pi(u)^{(s)}\), \(\phi(a^{(s^2)}) = \phi(a)^{(s)}\).
\end{itemize}
There is an isomorphism \(\widetilde \Delta^{(\infty, S)} \to \Delta^{(\infty, S)}\) of pro-groups given by \(u^{(s)} \mapsto u^{(s)}\), the inverse one is given by \(\iota(v^{(s^2)}) \mapsto v^{(s)}\) and \(u^{(s^2)} \mapsto (u \cdot s)^{(s)}\) for \(v \in \mathcal D\) and \(u \in \Delta\). So we have an isomorphism \((R^{(\infty, S)}, \Delta^{(\infty, S)}) \cong (R^{(\infty, S)}, \widetilde \Delta^{(\infty, S)})\) of odd form pro-rings in \(\Pro(\Set)\), even though \((R^{(s)}, \widetilde \Delta^{(s)})\) are not actual odd form rings.
\end{proof}

The previous lemma does not give a semi-direct product \((R^{(\infty, S)} \rtimes S^{-1} R \rtimes S^{-1} K, \Delta^{(\infty, S)} \rtimes S^{-1} \Delta)\), since the upper indices in the formulas of this action depend on the denominators. Instead we have morphisms \(R^{(\infty, S)} \times \frac Rs \to R^{(\infty, S)}\), \(\frac Rs \times R^{(\infty, S)} \to R^{(\infty, S)}\), \(\Delta^{(\infty, S)} \times \frac Rs \to \Delta^{(\infty, S)}\), \((\Delta \cdot \frac 1s) \times R^{(\infty, S)} \to \Delta^{(\infty, S)}\) of pro-sets for every \(s \in S\), where \(\frac Rs = \{\frac as \mid a \in R\}\) and \(\Delta \cdot \frac 1s = \{u \cdot \frac 1s \mid u \in \Delta\}\) are abstract sets. The localizations \(S^{-1} R\) and \(S^{-1} \Delta\) are their direct limits, these morphisms are coherent with each other and satisfy the axioms of action of odd form rings. On the other hand, there is a well-defined odd form pro-ring \((R^{(\infty, S)} \rtimes R \rtimes K, \Delta^{(\infty, S)} \rtimes \Delta)\).

\section{Hyperbolic pairs}

From now on let \((R, \Delta, \mathcal D)\) be an odd form \(K\)-algebra. A tuple \(\eta = (e_{-\eta}, e_\eta, q_{-\eta}, q_\eta)\) is called a hyperbolic pair if \(e_\eta, e_{-\eta}\) are orthogonal idempotents in \(R\) with the property \(e_{-\eta} = \inv{e_\eta}\) and \(q_\eta, q_{-\eta}\) are elements of \(\Delta\) such that \(\pi(q_\eta) = e_\eta\), \(\pi(q_{-\eta}) = e_{-\eta}\), \(\rho(q_\eta) = \rho(q_{-\eta}) = 0\), \(q_\eta = q_\eta \cdot e_\eta\), \(q_{-\eta} = q_{-\eta} \cdot e_{-\eta}\). We denote the idempotent \(e_\eta + e_{-\eta}\) by \(e_{|\eta|}\). Hyperbolic pairs \(\eta, \widetilde \eta\) are called orthogonal if \(e_{|\eta|}\) and \(e_{|\widetilde\eta|}\) are orthogonal idempotents, in this case 
\[\eta \oplus \widetilde \eta = (e_{-\eta} + e_{-\widetilde \eta}, e_\eta + e_{\widetilde\eta}, q_{-\eta} \dotplus q_{-\widetilde\eta}, q_\eta \dotplus q_{\widetilde\eta})\]
is a hyperbolic pair (note that \(q_\eta\) and \(q_{-\eta}\) commute with \(q_{\widetilde\eta}\) and \(q_{-\widetilde\eta}\)). Clearly, for any hyperbolic pair \(\eta\) the tuple \(-\eta = (e_\eta, e_{-\eta}, q_\eta, q_{-\eta})\) is also a hyperbolic pair. If \((R, \Delta)\) is constructed from a quadratic module \(M\), then hyperbolic pairs correspond to the so-called hyperbolic subspaces of \(M\) and orthogonal hyperbolic pairs correspond to orthogonal hyperbolic subspaces, see \cite{StabVor} for details.

Let \(e\) and \(\widetilde e\) be idempotents in a non-unital ring \(S\). We say that \(e\) and \(\widetilde e\) are Morita equivalent if \(e \in S\widetilde e S\) and \(\widetilde e \in SeS\) (then the unital subrings \(eSe\) and \(\widetilde e S \widetilde e\) are actually Morita equivalent). Hyperbolic pairs \(\eta\) and \(\widetilde \eta\) in \((R, \Delta, \mathcal D)\) are said to be Morita equivalent if the idempotents \(e_{|\eta|}\), \(e_{|\widetilde \eta|}\) are Morita equivalent in \(R\). For example, \(\eta\) and \(-\eta\) are Morita equivalent. If \(\eta\) and \(\widetilde \eta\) are orthogonal Morita equivalent hyperbolic pairs, then they both are Morita equivalent to \(\eta \oplus \widetilde \eta\).

For any hyperbolic pairs \(\eta\) and \(\widetilde \eta\) we use the notation \(R_{\eta \widetilde \eta} = e_\eta R e_{\widetilde \eta}\), \(\Delta_\eta = \Delta \cdot e_\eta\), \(\Delta^{|\eta|'}_\eta = \{u \in \Delta \cdot e_\eta \mid e_{|\eta|} \pi(u) = 0\}\), and \(\mathcal D_\eta = \mathcal D \cdot e_\eta\). Note that \((\Delta^{|\eta|'}_\eta, \mathcal D_\eta)\) is a \(2\)-step nilpotent \(K\)-module.

Let \(\eta\) be a hyperbolic pair. We associate to \(\eta\) several subgroups of \(\unit(R, \Delta)\). For any element \(u \in \Delta_\eta^{|\eta|'}\) a transvection \(T^\eta(u) \in \unit(R, \Delta)\) is given by
\[\beta(T^\eta(u)) = \rho(u) + \pi(u) - \inv{\pi(u)}, \quad \gamma(T^\eta(u)) = u \dotplus q_{-\eta} \cdot (\rho(u) - \inv{\pi(u)}) \dotminus \phi(\rho(u) + \pi(u)).\]
It is easy to see that \(T^\eta \colon \Delta_\eta^{|\eta|'} \to \unit(R, \Delta)\) is a well-defined injective group homomorphism. Its image is denoted by \(T^\eta(*)\). The elements \(T^\eta(u)\) are similar to the ESD-transvections from \cite{OddDefPetrov}.

For any element \(a \in (R_{\eta\eta})^*\) an elementary dilation \(D^\eta(a) \in \unit(R, \Delta)\) is given by
\[\beta(D^\eta(a)) = a + \inv a^{-1} - e_{|\eta|},\quad \gamma(D^\eta(a)) = q_{-\eta} \cdot (\inv a^{-1} - e_{-\eta}) \dotplus q_\eta \cdot (a - e_\eta) \dotminus \phi(a - e_\eta),\]
where \(a^{-1}\) is the inverse of \(a\) in \(R_{\eta\eta}\) and \(\inv a^{-1} = \inv{a^{-1}}\) is the inverse of \(\inv a\) in \(R_{-\eta, -\eta}\). Again, the map \(D^\eta \colon (R_{\eta\eta})^* \to \unit(R, \Delta)\) is a well-defined injective group homomorphism, its image is denoted by \(D^\eta(*)\). Clearly, \(D^{-\eta}(a) = D^\eta(\inv a^{-1})\).

Note that \((R_{|\eta|' |\eta|'}, \Delta^{|\eta|'}_{|\eta|'}, \mathcal D_{|\eta|'})\) is an augmented odd form \(K\)-subalgebra of \((R, \Delta, \mathcal D)\), where \(R_{|\eta|' |\eta|'} = \{a \in R \mid e_{|\eta|} a = 0 = a e_{|\eta|}\}\), \(\Delta^{|\eta|'}_{|\eta|'} = \{u \in \Delta \mid u \cdot e_{|\eta|} = \dot 0, e_{|\eta|} \pi(u) = 0\}\), and \(\mathcal D_{|\eta|'} = \{u \in \mathcal D \mid u \cdot e_{|\eta|} = 0\}\). Let
\[\unit_{|\eta|'} = \unit(R_{|\eta|' |\eta|'}, \Delta^{|\eta|'}_{|\eta|'}) = \{g \in \unit(R, \Delta) \mid \beta(g) e_{|\eta|} = 0 = e_{|\eta|} \beta(g), \gamma(g) \cdot e_{|\eta|} = \dot 0\},\]
it is a smaller unitary group.

A maximal parabolic subgroup of \(\eta\) is the group
\[P_\eta = \{g \in \unit(R, \Delta) \mid \beta(g) e_{-\eta} = e_{-\eta} \beta(g) e_{-\eta}, e_\eta \beta(g) = e_\eta \beta(g) e_\eta, \gamma(g) \cdot e_{-\eta} = q_{-\eta} \cdot \beta(g) e_{-\eta}\}.\]
Finally, a Levi subgroup of \(P_\eta\) is \(L_\eta = P_\eta \cap P_{-\eta}\).

\begin{lemma}\label{maximal-parabolic}
There are decompositions \(L_\eta = D^\eta(*) \times \unit_{|\eta|'}\) and \(P_\eta = T^\eta(*) \rtimes L_\eta\) (so \(T^\eta(*)\) is called a unipotent radical of \(P_\eta\)). There are identities
\[\up g{(T^\eta(u))} = T^\eta(\gamma(g) \cdot \pi(u) \dotplus u) \text{ for } g \in \unit_{|\eta|'}\]
and
\[T^\eta(u)^{D^\eta(a)} = T^\eta(u \cdot a).\]
\end{lemma}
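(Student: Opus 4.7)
My plan is to verify the four assertions by reducing everything to the block decomposition of \(R\) induced by the three orthogonal idempotents \(e_\eta\), \(1 - e_{|\eta|}\), and \(e_{-\eta}\) of \(R \rtimes K\), together with the corresponding splittings of \(\Delta\) under right multiplication by these idempotents. I first check the inclusions \(D^\eta(*), \unit_{|\eta|'} \leq L_\eta\) and \(T^\eta(*) \leq P_\eta\) by direct substitution into the defining parabolic relations, using \(\pi(q_{\pm\eta}) = e_{\pm\eta}\), \(q_{\pm\eta} \cdot e_{\mp\eta} = \dot 0\), and the explicit formulas for the \(\beta\)- and \(\gamma\)-components of dilations and transvections. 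The same inspection shows that every element of \(D^\eta(*)\) commutes with every element of \(\unit_{|\eta|'}\), since their \(\beta\)-components live in orthogonal blocks and their \(\gamma\)-components are supported on orthogonal idempotents, giving the direct product \(D^\eta(*) \times \unit_{|\eta|'}\).

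For the Levi decomposition, take \(g \in L_\eta\). The two parabolic conditions force \(\beta(g)\) to commute with \(e_{|\eta|}\), so \(\beta(g) = b + c + d\) with \(b \in R_{\eta\eta}\), \(c \in R_{-\eta, -\eta}\), and \(d \in R_{|\eta|'|\eta|'}\); the unitary identity \(\alpha(g)^{-1} = \inv{\alpha(g)}\) then pins down \(c\) in terms of \(b\), so that \(a := b + e_\eta\) lies in \(R_{\eta\eta}^*\). The element \(D^\eta(a)^{-1} g\) then has no \(e_{\pm\eta}\)-contribution to \(\beta\); the relations \(\gamma(g) \cdot e_{\pm\eta} = q_{\pm\eta} \cdot \beta(g) e_{\pm\eta}\) coming from \(P_{\pm\eta}\) determine \(\gamma(g) \cdot e_{|\eta|}\) in terms of \(\beta(g)\), and a direct comparison with \(\gamma(D^\eta(a))\) shows \(D^\eta(a)^{-1} g \in \unit_{|\eta|'}\), with uniqueness immediate. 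For \(P_\eta = T^\eta(*) \rtimes L_\eta\) I argue analogously: given \(g \in P_\eta\), I assemble \(u \in \Delta_\eta^{|\eta|'}\) from the \(e_\eta\)- and \((1 - e_{|\eta|})\)-components of \(\gamma(g)\), adjusted by the \(\phi\)-correction dictated by \(u \cdot (a + b) = u \cdot a \dotplus \phi(\inv b \rho(u) a) \dotplus u \cdot b\) and pinned down by \(\rho(u) + \inv{\rho(u)} + \inv{\pi(u)}\pi(u) = 0\), so that \(T^\eta(u)^{-1} g \in L_\eta\). Normality of \(T^\eta(*)\) in \(P_\eta\) then follows from the final two conjugation identities once they are verified.

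The two conjugation formulas are pure computation inside \(\unit(R, \Delta)\). Expanding \(\up g{T^\eta(u)}\) and \(T^\eta(u)^{D^\eta(a)}\) by the group laws \(\alpha(gg') = \alpha(g) \alpha(g')\), \(\gamma(gg') = \gamma(g) \cdot \alpha(g') \dotplus \gamma(g')\), and using \(\alpha(g) e_{|\eta|} = e_{|\eta|} \alpha(g) = e_{|\eta|}\) for \(g \in \unit_{|\eta|'}\) together with \(q_{\pm\eta} \cdot \beta(g) e_{\pm\eta} = \dot 0\) in that case, the \(\beta\)- and \(\gamma\)-parts reassemble into exactly the expressions defining \(T^\eta(\gamma(g) \cdot \pi(u) \dotplus u)\) and \(T^\eta(u \cdot a)\) respectively; for the dilation one additionally uses that \(\alpha(D^\eta(a))\) acts on \(\Delta \cdot e_\eta\) from the right by \(a\). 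The main obstacle throughout is bookkeeping: the \(2\)-step nilpotent structure of \(\Delta\) produces a \(\phi\)-correction each time \(u \cdot (a + b)\) is split, and these must be tracked carefully through every decomposition of \(\gamma(g)\) along the three idempotents. No new ideas are required beyond a systematic application of the odd-form-ring axioms, but the verification is lengthy.
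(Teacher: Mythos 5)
Your proposal is correct and follows essentially the same route as the paper: direct verification via the Peirce decomposition along \(e_\eta\), \(e_{-\eta}\), \(1 - e_{|\eta|}\), with the element \(a = e_\eta + e_\eta \beta(g) e_\eta\) you extract being exactly the paper's projection \(p_1\). The paper merely packages your ``peel off the transvection, then the dilation'' step as an explicit retraction \((p_1, p_2) \colon P_\eta \to L_\eta\) whose kernel is \(T^\eta(*)\); the underlying computations coincide.
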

\begin{proof}
The first decomposition and the identities are easy to check. Since \(T^\eta(*) \cap L_\eta = 1\), there is an inclusion \(T^\eta(*) \rtimes L_\eta \leq P_\eta\). There are well-defined group homomorphisms
\[p_1 \colon P_\eta \to (R_{\eta\eta})^*, g \mapsto e_\eta + e_\eta \beta(g) e_\eta\]
and
\[p_2 \colon P_\eta \to \unit_{|\eta|'}, \beta(p_2(g)) = (1 - e_{|\eta|}) \beta(g) (1 - e_{|\eta|}), \gamma(p_2(g)) = (\gamma(g) \dotminus q_{-\eta} \cdot \beta(g)) \cdot (1 - e_{|\eta|}).\]
From a direct computation it follows that \((p_1, p_2) \colon P_\eta \to L_\eta\) is a retraction of the inclusion \(L_\eta \leq P_\eta\) and its kernel equals to \(T^\eta(*)\). 
\end{proof}

\begin{lemma}\label{parabolic-extraction}
Suppose that \(g \in \unit(R, \Delta)\) satisfies \(a = e_\eta + e_\eta \beta(g) e_\eta \in R_{\eta\eta}^*\). Then there is unique \(u \in \Delta^{|\eta|'}_\eta\) such that \(T^\eta(\dotminus u)\, g \in P_{-\eta}\).
\end{lemma}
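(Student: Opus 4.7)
The plan is to derive the necessary values of $\pi(u)$ and $\rho(u)$ from the $P_{-\eta}$-conditions, verify consistency with the $(\pi, \rho)$-axiom using unitarity of $g$, produce $u$ explicitly with a $\phi$-correction pinning down the antisymmetric part of $\rho(u)$, and check the remaining defining conditions of $P_{-\eta}$. Uniqueness is immediate: if $T^\eta(\dotminus u)\, g = T^\eta(\dotminus u')\, g$, cancelling $g$ and applying the injectivity of $T^\eta$ (noted just before the lemma) gives $u = u'$.

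Decompose the $\eta$-column $\alpha(g) e_\eta = a + b + c$ according to the blocks $(\eta, \eta)$, $(|\eta|', \eta)$, $(-\eta, \eta)$, where $a$ is invertible by hypothesis, $b = (1 - e_{|\eta|}) \beta(g) e_\eta$ and $c = e_{-\eta} \beta(g) e_\eta$. Using the formula $\beta(T^\eta(\dotminus u)) = \inv{\rho(u)} - \pi(u) + \inv{\pi(u)}$ together with the incidences $\pi(u) \in R_{|\eta|', \eta}$, $\inv{\pi(u)} \in R_{-\eta, |\eta|'}$, $\inv{\rho(u)} \in R_{-\eta, \eta}$, the requirement $(1 - e_\eta)\, \alpha(T^\eta(\dotminus u)\, g)\, e_\eta = 0$ splits block-by-block into $\pi(u) = b a^{-1}$ and, after invoking the $(\pi, \rho)$-axiom, $\rho(u) = c a^{-1}$. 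Substituting these candidates back into the axiom reduces consistency to $\inv a c + \inv c a + \inv b b = 0$, which is exactly the $(-\eta, \eta)$-block of the unitarity identity $\inv{\alpha(g)}\, \alpha(g) = 1$ and therefore holds automatically.

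To realize these values as an element of $\Delta^{|\eta|'}_\eta$, put
\[
u = \bigl((\gamma(g) \cdot e_\eta \dotminus q_\eta \cdot (a - e_\eta)) \cdot a^{-1} \dotminus q_{-\eta} \cdot (c a^{-1})\bigr) \dotplus \phi(c a^{-1}).
\]
A direct bookkeeping with the odd form ring axioms shows that $u \in \Delta^{|\eta|'}_\eta$ with $\pi(u) = b a^{-1}$ and $\rho(u) = c a^{-1}$; the $\phi$-term converts the bracketed expression's $\rho$-value $\inv{a^{-1}} \inv c$ into the required $c a^{-1}$. The first $P_{-\eta}$-condition on $h := T^\eta(\dotminus u)\, g$ is now built into the construction; the opposite block vanishing on $\beta(h)$ follows from the remaining entries of $\inv{\alpha(h)}\, \alpha(h) = 1$ combined with the structure of $\alpha(h) e_\eta$; and the condition $\gamma(h) \cdot e_\eta = q_\eta \cdot \beta(h) e_\eta$ is checked by expanding $\gamma(h) = \gamma(T^\eta(\dotminus u)) \cdot \alpha(g) \dotplus \gamma(g)$, acting on the right by $e_\eta$, and comparing with $q_\eta \cdot (a - e_\eta)$. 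The main obstacle is this last verification: the bracketed part of $u$ naturally encodes $\gamma(g) \cdot e_\eta$, but the $\phi(c a^{-1})$ correction must also be tracked through the $2$-step nilpotent $\Delta$, and its cancellation in the $\gamma$-level equation is what ultimately makes the construction close up.
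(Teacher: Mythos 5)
Your existence argument is essentially the paper's own proof: the paper records exactly the same formulas \(\pi(u) = (1 - e_{|\eta|})\,\beta(g)\,a^{-1}\) and \(\rho(u) = e_{-\eta}\,\beta(g)\,a^{-1}\) together with an explicit expression for \(u\) (differing from yours only in how the central \(\phi\)-correction is packaged), and your reduction of the consistency check to the \((-\eta,\eta)\)-block of \(\inv{\alpha(g)}\,\alpha(g) = 1\) is correct.

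The one genuine flaw is the uniqueness argument. As written, you prove that the map \(u \mapsto T^\eta(\dotminus u)\, g\) is injective; but the lemma asserts that at most one \(u\) satisfies \(T^\eta(\dotminus u)\, g \in P_{-\eta}\), and a priori two distinct \(u, u'\) could yield two \emph{distinct} elements of \(P_{-\eta}\), so cancelling \(g\) from an equality you never established does not apply. The repair is short: if both products lie in the subgroup \(P_{-\eta}\), then so does
\[
T^\eta(\dotminus u)\, g\, \bigl(T^\eta(\dotminus u')\, g\bigr)^{-1} = T^\eta(\dotminus u)\, T^\eta(u') = T^\eta(\dotminus u \dotplus u'),
\]
and \(T^\eta(*) \cap P_{-\eta} = 1\): for \(v \in \Delta^{|\eta|'}_\eta\) the condition \(\beta(T^\eta(v))\, e_\eta = e_\eta\, \beta(T^\eta(v))\, e_\eta\) forces \(\pi(v) = \rho(v) = 0\), whence \(\gamma(T^\eta(v)) = v\), and the condition \(\gamma \cdot e_\eta = q_\eta \cdot \beta(T^\eta(v))\, e_\eta = \dot 0\) then gives \(v = v \cdot e_\eta = \dot 0\). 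Hence \(u = u'\). With that substitution the proof is complete to the same standard as the paper's (which likewise leaves the verification of the remaining \(P_{-\eta}\)-conditions to "direct calculation").
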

\begin{proof}
Indeed, direct calculation shows existence and uniqueness for \(u\). More explicitly,
\begin{align*}
\pi(u) &= (1 - e_{|\eta|}) \beta(g) a^{-1},\\
\rho(u) &= e_{-\eta} \beta(g) a^{-1},\\
u &= (\gamma(g) \dotminus q_{-\eta} \cdot \beta(g) \dotminus q_\eta \cdot \beta(g) \dotplus \phi(\beta(g)) \cdot a^{-1}.\qedhere
\end{align*}
\end{proof}

Now consider two special types of transvections. If \(\eta\) is a hyperbolic pair and \(u \in \mathcal D_\eta\), then
\[\beta(T^\eta(u)) = \rho(u),\enskip \gamma(T^\eta(u)) = q_{-\eta} \cdot \rho(u) \dotminus u.\]
If \(\eta\), \(\widetilde \eta\) are orthogonal hyperbolic pairs and \(a \in R_{\widetilde \eta \eta}\), then
\[\beta(T^\eta(q_{\widetilde \eta} \cdot a)) = a - \inv a,\enskip \gamma(T^\eta(q_{\widetilde \eta} \cdot a)) = q_{\widetilde \eta} \cdot a \dotminus q_{-\eta} \cdot \inv a \dotminus \phi(a).\]
In this case \(T^\eta(q_{\widetilde \eta} \cdot a) = T^{-\widetilde \eta}(q_{-\eta} \cdot (-\inv a))\) for all \(a \in R_{\widetilde \eta \eta}\) and all elements from the intersection \(T^\eta(*) \cap T^{-\widetilde \eta}(*)\) are of this form.

An orthogonal hyperbolic family of rank \(n\) in \((R, \Delta, \mathcal D)\) is a family \(\eta_1, \ldots, \eta_n\) of orthogonal Morita equivalent hyperbolic pairs. In this case we write \(i\) and \(-i\) as indices for the summands of \((R, \Delta, \mathcal D)\) instead of \(\eta_i\) and \(-\eta_i\). For example, \(R_{-2, 1}\) means \(R_{-\eta_2, \eta_1}\). An orthogonal hyperbolic family is called free if there are elements \(e_{ij} \in R\) for \(1 \leq i, j \leq n\) such that \(e_{ij} e_{jk} = e_{ik}\) and \(e_{ii} = e_i\), such families arise in matrix odd form algebras. We say that the orthogonal hyperbolic family is strong if all \(e_i\) for \(-n \leq i \leq n\) and \(i \neq 0\) are pairwise Morita equivalent, this is the case for the odd form algebras constructed from \(K\)-modules with split quadratic or symplectic forms.

From now on fix an orthogonal hyperbolic family \(\eta_1, \ldots, \eta_n\). Elementary transvections are \(T_{ij}(a) = T^{\eta_j}(q_i \cdot a)\) for \(a \in R_{ij}\) and \(T_i(u) = T^{\eta_i}(u)\) for \(u \in \Delta^0_i = \{u \in \Delta_i \mid (e_{|1|} + \ldots + e_{|n|}) \pi(u) = 0\}\). It is easy to see that
\[T^{\eta_i}(*) = T_i(*) \dotoplus \bigoplus^\cdot_{j \neq \pm i} T_{ji}(*),\]
where \(T_i(*)\) and \(T_{ji}(*)\) are the subgroups of all elementary transvections with given indices. An elementary unitary group \(\eunit(R, \Delta)\) is the subgroup of \(\unit(R, \Delta)\) generated by all elementary transvections, i.e.
\[\eunit(R, \Delta) = \langle T_{ij}(*), T_k(*) \rangle.\]

Elementary dilations are \(D_i(a) = D^{\eta_i}(a)\) for \(a \in (R_{ii})^*\) and \(D_0(g) = g\) for \(g \in \unit_{|\eta_1 \oplus \ldots \oplus \eta_n|'}\). Clearly, they satisfy
\begin{enumerate}[label = (D\arabic*), start = 0]
\item \(D_i(a) = D_{-i}(\inv a^{-1})\) for \(i \neq 0\);
\item \(D_i(a)\, D_i(b) = D_i(ab)\) for \(i \neq 0\);
\item \(D_0(g)\, D_0(h) = D_0(gh)\);
\item \([D_i(a), D_j(b)] = 1\) for \(0 \neq i \neq \pm j \neq 0\);
\item \([D_i(a), D_0(g)] = 1\) for \(i \neq 0\).
\end{enumerate}
The product \(\diag(R, \Delta) = \prod_{i = 0}^n D_i(*)\) is the diagonal subgroup of \(\unit(R, \Delta)\), its is the abstract group with the generators \(D_i(a)\) for \(-n \leq i \leq n\) and the relations (D0) -- (D4).

Let us explicitly construct an analog of Pierce decomposition for \((R, \Delta, \mathcal D)\) and its homotopes. First of all, consider any \(2\)-step nilpotent \(K\)-modules \((M_i, M_{i0})\) for \(1 \leq i \leq n\). Fix some bilinear maps \([-, =] \colon M_i / M_{i0} \times M_j / M_{j0} \to \bigoplus_k M_{k0}\) for \(i < j\). Then \((N, N_0) = \bigoplus_i^\cdot (M_i, M_{i0})\) is also \(2\)-step \(K\)-module, where \(N_0 = \bigoplus_i M_{i0}\) and \(N = \bigoplus_i^\cdot M_i\), the operations are defined in the obvious way. Moreover, \(N^{(\infty)} \cong \bigoplus_i^\cdot N_i^{(\infty)}\).

Now there is a decomposition \(R = \bigoplus_{-n \leq i, j \leq n} R_{ij}\), where \(R_{ij} = e_i R e_j\) and \(e_0 = 1 - \sum_{i \neq 0} e_i \in R \rtimes K\). Also \((\Delta, \mathcal D) = \bigoplus_i^\cdot (\Delta_i, \mathcal D_i) \dotoplus \bigoplus_{i + j > 0}^\cdot (\phi(R_{ij}), \phi(R_{ij}))\), where \(\mathcal D_i = \mathcal D \cdot e_i\) and \(\Delta_i = \Delta \cdot e_i\). Note that \(\phi(R_{ij}) \cong R_{ij}\) for \(i + j > 0\). Every \(2\)-step nilpotent \(K\)-module \((\Delta_i, \mathcal D_i)\) further decomposes as \(\bigoplus_{j \neq 0}^\cdot (q_j \cdot R_{ji}, 0) \dotoplus (\Delta_i^0, \mathcal D_i)\), where \(\Delta_i^0 = \{u \in \Delta_i \mid \pi(u) = e_0 \pi(u)\}\). By lemma \ref{ofa-homotopes}, for any \(s \in K\) the same decomposition holds for \((R^{(s)}, \Delta^{(s)}, \mathcal D^{(s)})\):
\begin{align*}
R^{(s)} &= \bigoplus_{-n \leq i, j \leq n} R^{(s)}_{ij},\\
(\Delta^{(s)}, \mathcal D^{(s)}) &= \bigoplus_i^\cdot (\Delta^{0, (s)}_i, \mathcal D^{(s)}_i) \dotoplus \bigoplus_{i \neq 0}^\cdot \bigoplus_j^\cdot (q_i \cdot R_{ij}^{(s)}, 0) \dotoplus \bigoplus_{i + j > 0}^\cdot (\phi(R_{ij}^{(s)}), \phi(R_{ij}^{(s)})),
\end{align*}
where \(R^{(s)}_{ij} = e_i R^{(s)} e_j\), \(\Delta^{0, (s)}_i = \{u^{(s)} \in \Delta^{(s)} \cdot e_i \mid \pi(u^{(s)}) = e_0 \pi(u^{(s)})\}\), \(\mathcal D^{(s)}_i = \mathcal D^{(s)} \cdot e_i\). Also if \(S \subseteq K\) is a multiplicative subset, then we have this decomposition for \((R^{(\infty, S)}, \Delta^{(\infty, S)}, \mathcal D^{(\infty, S)})\).

\section{Steinberg groups}

An odd unitary Steinberg group \(\stunit(R, \Delta)\) is generated by symbols \(X_{ij}(a)\) for \(0 < |i|, |j| \leq n\), \(i \neq \pm j\), \(a \in R_{ij}\) and by symbols \(X_i(u)\) for \(0 < |i| \leq n\), \(u \in \Delta^0_i\). The relations on these symbols are the following:
\begin{enumerate}[label = (St\arabic*), start = 0]
\item \(X_{ij}(a) = X_{-j, -i}(-\inv a)\);
\item \(X_{ij}(a)\, X_{ij}(b) = X_{ij}(a + b)\);
\item \(X_i(u)\, X_i(v) = X_i(u \dotplus v)\);
\item \([X_{ij}(a), X_{kl}(b)] = 1\) for \(i \neq l \neq -j \neq -k \neq i\);
\item \([X_{ij}(a), X_{jk}(b)] = X_{ik}(ab)\) for \(i \neq \pm k\);
\item \([X_{ij}(a), X_{j, -i}(b)] = X_{-i}(\phi(ab))\);
\item \([X_i(u), X_j(v)] = X_{-i, j}(-\inv{\pi(u)} \pi(v))\) for \(i \neq \pm j\);
\item \([X_i(u), X_{jk}(a)] = 1\) for \(j \neq i \neq -k\);
\item \([X_i(u), X_{ij}(a)] = X_{-i, j}(\rho(u) a)\, X_j(\dotminus u \cdot (-a))\).
\end{enumerate}

There is a canonical map \(\stmap \colon \stunit(R, \Delta) \to \unit(R, \Delta)\) given by \(X_i(u) \mapsto T_i(u)\) and \(X_{ij}(a) \mapsto T_{ij}(a)\). Indeed, the elementary transvections satisfy the relations (St0)--(St8) by lemma \(3\) from \cite{StabVor} or by direct computations using our lemma \ref{maximal-parabolic}. The image of \(\stmap\) is the elementary subgroup \(\eunit(R, \Delta)\), its kernel is denoted by \(\kunit_2(R, \Delta)\), and its cokernel is denoted by \(\kunit_1(R, \Delta)\) (it is the set of cosets in general).

The subgroups \(X_{ij}(R_{ij})\), \(X_j(\Delta^0_j)\), and \(X_j(\mathcal D_j)\) of \(\stunit(R, \Delta)\) are called root subgroups. Since \(\stmap\) maps Steinberg generators to elementary transvections, it follows that \(X_{ij}\) and \(X_j\) are injective group homomorphisms. Note that \(X_j(\mathcal D_j)\) depend on the augmentation, unlike the whole Steinberg group.

Let \(\Phi = \{\pm \mathrm e_i \pm \mathrm e_j, \pm \mathrm e_k, \pm 2 \mathrm e_k \mid 1 \leq i < j \leq n, 1 \leq k \leq n\} \subseteq \mathbb R^n\), it is a non-reduced crystallographic root system of type \(\mathsf{BC}_n\). Its elements are called long roots, short roots and ultrashort roots depending on their length (\(2\), \(\sqrt 2\), or \(1\)). Let also \(\mathrm e_{-i} = -\mathrm e_i\) for \(1 \leq i \leq n\). For any root \(\alpha\) we assign a root subgroup of \(\stunit(R, \Delta)\) in the following way:
\[
X_\alpha(*) = \begin{cases}
X_{ij}(R_{ij}), & \alpha = \mathrm e_j - \mathrm e_i, i \neq \pm j;\\
X_i(\Delta^0_i), & \alpha = \mathrm e_i;\\
X_i(\mathcal D_i), & \alpha = 2 \mathrm e_i.
\end{cases}
\]
The Steinberg relations imply that this definition is correct for short roots and
\[[X_\alpha(*), X_\beta(*)] \leq \prod_{\substack{i\alpha + j\beta \in \Phi\\ i, j > 0}} X_{i\alpha + j\beta}(*)\]
for all non-antiparallel roots \(\alpha, \beta \in \Phi\), where the right hand side is a nilpotent subgroup of \(\stunit(R, \Delta)\).

Now we a ready to define Steinberg pro-groups for multiplicative subseteq \(S \subseteq K\). Let \(\stunit^{(s)}(R, \Delta) = \stunit(R^{(s)}, \Delta^{(s)})\) be the group generated by symbols \(X_{ij}(a)\) and \(X_j(u)\) for \(a \in R_{ij}^{(s)}\) and \(u \in \Delta^{0, (s)}_j\), where \(s \in S\). The relations on this symbols are (St0)--(St8). There are obvious structure homomorphisms \(\stunit^{(ss')}(R, \Delta) \to \stunit^{(s)}(R, \Delta)\), and a Steinberg pro-group \(\stunit^{(\infty, S)}(R, \Delta)\) is the formal projective limit of all \(\stunit^{(s)}(R, \Delta)\) for \(s \in S\). We prove in the next section that it is independent on the augmentation. The generators may be considered as morphisms \(X_{ij} \colon R^{(\infty, S)}_{ij} \to \stunit^{(\infty, S)}(R, \Delta)\) and \(X_j \colon \Delta^{0, (\infty, S)}_j \to \stunit^{(\infty, S)}(R, \Delta)\) of pro-groups. Also there is a morphism \(\stmap \colon \stunit^{(\infty, S)}(R, \Delta) \to \unit(R^{(\infty, S)}, \Delta^{(\infty, S)})\) of pro-groups, where every \(\stmap \colon \stunit^{(s)}(R, \Delta) \to \unit(R^{(s)}, \Delta^{(s)})\) comes from the composition
\[\stunit^{(s)}(R, \Delta) \to \stunit(R^{(s)} \rtimes R, \Delta^{(s)} \rtimes \Delta) \to \unit(R^{(s)} \rtimes R, \Delta^{(s)} \rtimes \Delta).\]

Note that all constructions from \cite{LinK2} may be stated in terms of odd form algebra. If \(S\) is a \(K\)-algebra with a complete family of orthogonal idempotents \(\eps_1, \ldots, \eps_n\), then \(R = S^\op \times S\) is a \(K\)-algebra with the involution \(\inv{(a^\op, b)} = (b^\op, a)\), \((R, \Delta^{\max})\) is a special unital odd form \(K\)-algebra, and \(\unit(R, \Delta^{\max}) \cong S^*\). Moreover, \(e_{-i} = (\eps_i^\op, 0)\) and \(e_i = (0, \eps_i)\) are orthogonal idempotents for \(0 < i \leq n\), they determine orthogonal hyperbolic pairs \(\eta_i = (e_{-i}, e_i, q_{-i}, q_i)\) (here \(q_{-i}\) and \(q_i\) are uniquely determined since the odd form algebra is special). Also \(\eps_i\) are Morita equivalent if and only if \(\eta_i\) are Morita equivalent. The linear Steinberg group constructed by \(\eps_i\) is canonically isomorphic to the unitary Steinberg group constructed by \(\eta_i\).

The diagonal group \(\diag(R, \Delta)\) acts on \(\stunit(R, \Delta)\) (and on each root subgroup) by
\begin{enumerate}[label = (Ad\arabic*)]
\item \(\up{D_i(a)}{X_{jk}(b)} = X_{jk}(b)\) for \(j \neq \pm i \neq k\);
\item \(\up{D_0(g)}{X_{ij}(a)} = X_{ij}(a)\);
\item \(\up{D_i(a)}{X_{ij}(b)} = X_{ij}(ab)\);
\item \(\up{D_i(a)}{X_j(u)} = X_j(u)\) for \(i \neq \pm j\);
\item \(\up{D_0(g)}{X_i(u)} = X_i(\gamma(g) \cdot \pi(u) \dotplus u)\);
\item \(\up{D_i(a)}{X_i(u)} = X_i(u \cdot a^{-1})\);
\end{enumerate}
where \(a^{-1}\) is the inverse in the ring \(R_{ii}\).

Note that the Weyl group \(W = (\mathbb Z / 2 \mathbb Z)^n \rtimes \mathrm S_n\) of \(\Phi\) acts on the set of root subgroups by permutations of roots (and on the set of \(D_i(*)\) by permutations of indices). It is easy to see that \(W\) acts transitively on all non-zero indices, on all pairs \((i, j)\) of indices with \(0 \neq i \neq \pm j \neq 0\), and on all roots of given length. Also, \(W\) acts transitively on all pairs of non-collinear roots \((\alpha, \beta)\) with given lengths, given angle between them, and given Dynkin diagram of \(\Phi \cap (\mathbb R\alpha + \mathbb R\beta)\). This Dynkin diagram may be \(\mathsf A_1 \times \mathsf A_1\), \(\mathsf A_1 \times \mathsf{BC}_1\), \(\mathsf A_2\), or \(\mathsf{BC}_2\). It is not uniquely determined by the lengths and the angle only in the case of orthogonal short roots (where there is a choice between \(\mathsf{A}_1 \times \mathsf{A}_1\) and \(\mathsf{BC}_2\)).

Similarly to \cite{LinK2}, it is possible to define root systems \(\Phi / \alpha\) and corresponding orthogonal hyperbolic families for all short and ultrashort \(\alpha \in \Phi\). Namely, fix such a root \(\alpha \in \Phi\). The set \(\Phi / \alpha\) is the image of \(\Phi \setminus \mathbb R\alpha\) in \(\mathbb R^n / \mathbb R\alpha\), it consists of classes \([\beta]\) for all roots \(\beta \in \Phi \setminus \mathbb R\alpha\). Let \(\stunit(R, \Delta; \Phi) = \stunit(R, \Delta)\) and \(\diag(R, \Delta; \Phi) = \diag(R, \Delta)\). We construct the new orthogonal hyperbolic family case by case up to the action of the new Weyl group \((\mathbb Z / 2 \mathbb Z)^{n - 1} \rtimes \mathrm S_{n - 1}\).

If \(\alpha\) is short, then without loss of generality \(\alpha = \mathrm e_n - \mathrm e_{n - 1}\). Let \(\eta_\infty = \eta_{n - 1} \oplus \eta_n\) be the new hyperbolic pair, \(\stunit(R, \Delta; \Phi / \alpha)\) and \(\diag(R, \Delta; \Phi / \alpha)\) be the groups with respect to the orthogonal hyperbolic family \(\eta_1, \ldots, \eta_{n - 2}, \eta_\infty\). There is a well-defined homomorphism \(F_\alpha \colon \stunit(R, \Delta; \Phi / \alpha) \to \stunit(R, \Delta; \Phi)\) given by
\begin{itemize}
\item \(X_{ij}(a) \mapsto X_{ij}(a)\) for \(i, j \neq \pm \infty\);
\item \(X_{i \infty}(a) \mapsto X_{i, n - 1}(ae_{n - 1})\, X_{in}(ae_n)\) for \(i \neq \pm \infty\);
\item \(X_{\infty j}(a) \mapsto X_{n - 1, j}(e_{n - 1} a)\, X_{n j}(e_n a)\) for \(j \neq \pm \infty\);
\item \(X_j(u) \mapsto X_j(u)\) for \(j \neq \pm \infty\);
\item \(X_\infty(u) \mapsto X_{n - 1}(u \cdot e_{n - 1})\, X_n(u \cdot e_n)\, X_{-n, n - 1}(e_{-n} \rho(u) e_{n - 1})\);
\item \(X_{-\infty}(u) \mapsto X_{1 - n}(u \cdot e_{1 - n})\, X_{-n}(u \cdot e_{-n})\, X_{n, 1 - n}(e_n \rho(u) e_{1 - n})\).
\end{itemize}
On \(X_{i, -\infty}\) and \(X_{-\infty, i}\) this homomorphism is defined by (St0).

If \(\alpha\) is ultrashort, then without loss of generality \(\alpha = \mathrm e_1\). Let \(\stunit(R, \Delta; \Phi / \alpha)\) and \(\diag(R, \Delta; \Phi / \alpha)\) be the groups with respect to the orthogonal hyperbolic family \(\eta_2, \ldots, \eta_n\). There is a well-defined homomorphism \(F_\alpha \colon \stunit(R, \Delta; \Phi / \alpha) \to \stunit(R, \Delta; \Phi)\) given by
\begin{itemize}
\item \(X_{ij}(a) \mapsto X_{ij}(a)\);
\item \(X_i(u) \mapsto X_i(u \dotminus q_1 \cdot \pi(u) \dotminus q_{-1} \cdot \pi(u))\, X_{-1, i}(e_{-1} \pi(u))\, X_{1, i}(e_1 \pi(u))\).
\end{itemize}
Note that in this case \(X_i\) are defined on different sets in \(\stunit(R, \Delta; \Phi / \alpha)\) and \(\stunit(R, \Delta; \Phi)\).

In all cases \(\stmap_{\Phi} \circ F_\alpha = \stmap_{\Phi / \alpha} \colon \stunit(R, \Delta; \Phi / \alpha) \to \unit(R, \Delta)\). The set \(\Phi / \alpha\) is a root system of type \(\mathsf{BC}_{n - 1}\), it parametrizes the root subgroups of \(\stunit(R, \Delta; \Phi / \alpha)\). We have \(F_\alpha(X_{[\beta]}(*)) = \prod_{\beta + i\alpha \in \Phi} X_{\beta + i\alpha}(*)\) and \(\langle \diag(R, \Delta; \Phi), T_\alpha(*), T_{-\alpha}(*) \rangle \leq \diag(R, \Delta; \Phi / \alpha)\), where \(T_\alpha(*) = \stmap(X_\alpha(*))\). Clearly, \(\Phi / \alpha = \Phi / (-\alpha)\) (i.e. there is a canonical bijection between these sets, the corresponding Steinberg and diagonal groups are also canonically isomorphic) and \((\Phi / \alpha) / [\beta] \cong (\Phi / \beta) / [\alpha]\) for non-collinear \(\alpha\) and \(\beta\) (here both sides are defined or not simultaneously). We denote \((\Phi / \alpha) / [\beta]\) by \(\Phi / \{\alpha, \beta\}\), it is defined if and only if neither \(\alpha\) nor \(\beta\) is long, \(\alpha\) and \(\beta\) are non-collinear, and they are not orthogonal short roots inside some root subsystem of type \(\mathsf{BC}_2\). Note that \(\Phi / \{\alpha, \beta\}\) depends only on the span of \(\alpha\) and \(\beta\). We say that \(\Phi / \alpha\) is obtained from \(\Phi\) by elimination of \(\alpha\), similarly for the orthogonal hyperbolic family and the Steinberg group.

We also need the subgroups
\begin{align*}
U^+(R, \Delta; \Phi) &= \langle X_{ij}(R_{ij}), T_k(\Delta^0_k) \mid i < j \text{ and } k > 0 \rangle,\\
U^-(R, \Delta; \Phi) &= \langle X_{ij}(R_{ij}), T_k(\Delta^0_k) \mid i > j \text{ and } k < 0 \rangle
\end{align*}
of the Steinberg group \(\stunit(R, \Delta; \Phi)\). Clearly, these groups are nilpotent and \(\stmap\) is injective on them. If \(\alpha\) lies in the basis of \(\mathsf{BC}_n\) (i.e. it is \(\mathrm e_1\) or \(\mathrm e_{i + 1} - \mathrm e_i\) for \(1 \leq i \leq n - 1\)), then we may similarly define \(\unit^{\pm}(R, \Delta; \Phi / \alpha)\). It is easy to see that \(U^{\pm}(R, \Delta; \Phi) = F_\alpha(\unit^{\pm}(R, \Delta; \Phi / \alpha)) \rtimes X_{\pm \alpha}(*)\) and \(F_\alpha(U^{\pm}(R, \Delta; \Phi / \alpha))\) are normalized by both \(X_\alpha(*)\) and \(X_{-\alpha}(*)\).

We say that \(P^{\pm}(R, \Delta; \Phi) = \stmap(U^{\pm}(R, \Delta; \Phi)) \rtimes \diag(R, \Delta; \Phi)\) are opposite parabolic subgroups of \(\unit(R, \Delta)\), \(\diag(R, \Delta; \Phi)\) is their common Levi subgroup, and \(U^{\pm}(R, \Delta; \Phi)\) are their unipotent radicals. If the orthogonal hyperbolic family has rank \(1\), then we get the maximal parabolic subgroups from lemma \ref{maximal-parabolic}. Clearly, \(\stunit(R, \Delta; \Phi)\) is generated by \(U^+(R, \Delta; \Phi)\) and \(U^-(R, \Delta; \Phi)\).

\section{Presentations of pro-groups}

We need several simple lemmas about various pro-groups. Let \(S \subseteq K\) be a multiplicative subset.

\begin{lemma}\label{steinberg-presentation}
Let \(G^{(\infty)}\) be a pro-group. Then every morphism \(\stunit^{(\infty, S)}(R, \Delta) \to G^{(\infty)}\) of pro-groups is uniquely determined by its compositions with the generators \(X_{ij}\) and \(X_j\). Morphisms \(f_{ij} \colon R_{ij}^{(\infty, S)} \to G^{(\infty)}\) and \(f_k \colon \Delta^{0, (\infty, S)}_k \to G^{(\infty)}\) of pro-sets are restrictions of a morphism \(\stunit^{(\infty, S)}(R, \Delta) \to G^{(\infty)}\) of pro-groups if and only if they satisfy (St0)--(St8) in \(\Pro(\Set)\).
\end{lemma}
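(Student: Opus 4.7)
The proof splits naturally into uniqueness and existence, and uniqueness is immediate: each ordinary group \(\stunit^{(s)}(R, \Delta)\) is by construction generated by the elements \(X_{ij}(a)\) and \(X_k(u)\), so any ordinary homomorphism out of it is determined by its values on these generators. Since \(\stunit^{(\infty, S)}(R, \Delta)\) is the formal projective limit of the \(\stunit^{(s)}(R, \Delta)\), any morphism of pro-groups out of it is determined by its restrictions to these generating pro-sets, which are precisely the data \(f \circ X_{ij}\) and \(f \circ X_k\).

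For existence, given morphisms of pro-sets \(f_{ij}\), \(f_k\) satisfying (St0)--(St8) in \(\Pro(\Set)\), the plan is to build the target morphism of pro-groups componentwise in \(G^{(\infty)} = \varprojlim^{\Pro}_{t \in T} G_t\). Fix \(t \in T\) and consider the compositions \(\pi_t \circ f_{ij}\) and \(\pi_t \circ f_k\). By the universal property of the formal projective limit \(R_{ij}^{(\infty, S)} = \varprojlim^{\Pro}_s R_{ij}^{(s)}\), each such composition to the ordinary set \(G_t\) factors through some finite level: there exist ordinary set maps \(\tilde f_{ij}^{(s)} \colon R_{ij}^{(s)} \to G_t\) and \(\tilde f_k^{(s)} \colon \Delta^{0,(s)}_k \to G_t\) representing them. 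Because the index category \(\mathcal S\) is filtered and only finitely many types of root data appear for rank \(n\), these factorizations can be chosen at a common level \(s = s(t)\).

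The crucial observation is that for two morphisms in \(\Pro(\Set)\) with source a formal projective limit, the statement that they are equal amounts, after possibly enlarging the source index, to an honest equality of set maps. Applying this to each of the finitely many identities (St0)--(St8), all of which are by hypothesis equalities of morphisms \(X^{(\infty)} \rightrightarrows G^{(\infty)}\) between suitable products of the \(R_{ij}^{(\infty, S)}\) and \(\Delta^{0,(\infty, S)}_k\), we may replace \(s(t)\) by a larger element of \(\mathcal S\) at which all the \(\tilde f_{ij}^{(s(t))}\) and \(\tilde f_k^{(s(t))}\) satisfy (St0)--(St8) as set maps into \(G_t\). By the abstract presentation of \(\stunit^{(s(t))}(R, \Delta)\) via generators and the Steinberg relations, these set maps therefore extend to an ordinary group homomorphism \(\stunit^{(s(t))}(R, \Delta) \to G_t\); coherence across \(t \in T\) is inherited from the coherence built into \(f_{ij}\) and \(f_k\), so these homomorphisms assemble into a morphism of pro-groups \(\stunit^{(\infty, S)}(R, \Delta) \to G^{(\infty)}\) whose restrictions recover \(f_{ij}\) and \(f_k\).

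The main obstacle is the bookkeeping in that crucial observation: promoting an equation of morphisms in \(\Pro(\Set)\) to a levelwise set-theoretic equation, uniformly across all (St\(i\)), while keeping the factorizations of the \(f_{ij}\), \(f_k\) coherent as \(t\) varies. This is purely a matter of the filteredness of \(\mathcal S\) and the standard formula for hom-sets in a pro-category; once it is done, no genuine group-theoretic content remains beyond the abstract presentation of each \(\stunit^{(s)}(R, \Delta)\).
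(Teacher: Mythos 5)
Your proposal is correct and follows essentially the same route as the paper, whose proof is just the terse remark that the claim follows from the definition of \(\stunit^{(\infty, S)}(R, \Delta)\) as a formal projective limit of groups given by generators and relations, using that there are only finitely many generating pro-sets and relation instances and that \(G^{(\infty)}\) is an actual pro-group (so the hom-set formula \(\varprojlim_t \varinjlim_s \operatorname{Hom}(\stunit^{(s)}, G_t)\) applies). You have simply spelled out the levelwise bookkeeping that the paper leaves implicit.
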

\begin{proof}
This follows directly from the definition of \(\stunit^{(\infty, S)}(R, \Delta)\). Here we use that there is only a finite number of these generators and that \(G^{(\infty)}\) is an actual pro-group, not a group object in \(\Pro(\Set)\).
\end{proof}

The previous lemma shows, in particular, that \(\stunit^{(\infty, S)}(R, \Delta)\) is independent on the augmentation up to a canonical isomorphism. The formulas \((\mathrm{Ad}1)\)--\((\mathrm{Ad}6)\) define a homomorphism \(\diag(S^{-1} R, S^{-1} \Delta) \to \Aut(\stunit^{(\infty, S)}(R, \Delta))\). Also there is a canonical morphism \(F_\alpha \colon \stunit^{(\infty, S)}(R, \Delta; \Phi / \alpha) \to \stunit^{(\infty, S)}(R, \Delta; \Phi)\) of pro-groups for each non-long root \(\alpha \in \Phi\) defined as for the ordinary Steinberg groups.

Recall that the orthogonal hyperbolic family is strong if all \(e_i\) are Morita equivalent. A morphism \(f \colon X \to Y\) in any category is called a split epimorphism if it admits a section (then it is an epimorphism in the usual sense). Split epimorphisms are preserved under pullbacks.

\begin{lemma}\label{ring-generation}
Let \(i, j, k\) be non-zero indices from \(\{-n, \ldots, n\}\). Then there is \(N \geq 0\) such that the morphism 
\[\sum_{p = 1}^N a_p^{(\infty)} b_p^{(\infty)} + \sum_{p = 1}^N c_p^{(\infty)} d_p^{(\infty)} \colon (R_{ij}^{(\infty, S)} \times R_{jk}^{(\infty, S)})^N \times (R_{i, -j}^{(\infty, S)} \times R_{-j, k}^{(\infty, S)})^N \to R_{ik}^{(\infty, S)}\]
is a split epimorphism in \(\Pro(\Set)\). If the orthogonal hyperbolic family is strong, then the same is true for
\[\sum_{p = 1}^N a_p^{(\infty)} b_p^{(\infty)} \colon (R_{ij}^{(\infty, S)} \times R_{jk}^{(\infty, S)})^N \to R_{ik}^{(\infty, S)}.\]
\end{lemma}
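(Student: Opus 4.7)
The plan is to extract $N$ and the section from ring-theoretic Morita equivalence of $e_{|\eta_j|}$ and $e_{|\eta_k|}$, and then upgrade the pointwise decomposition to a morphism of pro-sets by shifting the source level.

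First I would unpack the Morita equivalence of $\eta_j$ and $\eta_k$: by definition this gives a finite expression $e_{|\eta_k|} = \sum_{p = 1}^N \alpha_p\, e_{|\eta_j|}\, \beta_p$ with $\alpha_p, \beta_p \in R$. Sandwiching between two copies of $e_k$ and splitting $e_{|\eta_j|} = e_j + e_{-j}$ yields
\[
e_k \;=\; \sum_{p = 1}^N X_p Y_p \;+\; \sum_{p = 1}^N X'_p Y'_p
\]
where $X_p = e_k \alpha_p e_j \in R_{kj}$, $Y_p = e_j \beta_p e_k \in R_{jk}$, $X'_p = e_k \alpha_p e_{-j} \in R_{k, -j}$, $Y'_p = e_{-j} \beta_p e_k \in R_{-j, k}$. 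For $r \in R_{ik}$, left multiplication by $r$ produces $r = \sum_p (r X_p) Y_p + \sum_p (r X'_p) Y'_p$ with $r X_p \in R_{ij}$ and $r X'_p \in R_{i, -j}$, which realizes the required decomposition pointwise with $r X_p$, $r X'_p$ depending linearly on $r$ and $Y_p$, $Y'_p$ fixed.

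To promote this to a split epimorphism in $\Pro(\Set)$, I define the section so that its representative at target level $s$ has source level $s^2$, sending $r^{(s^2)} \in R_{ik}^{(s^2)}$ to the tuple with components $(r X_p)^{(s)} \in R_{ij}^{(s)}$, $Y_p^{(s)} \in R_{jk}^{(s)}$, $(r X'_p)^{(s)} \in R_{i, -j}^{(s)}$, $Y'_p^{(s)} \in R_{-j, k}^{(s)}$. Since the bilinear multiplication on homotopes is $(a^{(s)}, b^{(s)}) \mapsto (s a b)^{(s)}$, the composition with the morphism from the lemma evaluates to
\[
\Bigl( s \sum_p (r X_p) Y_p + s \sum_p (r X'_p) Y'_p \Bigr)^{(s)} \;=\; (s r)^{(s)},
\]
which is precisely the canonical structure map $R_{ik}^{(s^2)} \to R_{ik}^{(s)}$ representing the identity of $R_{ik}^{(\infty, S)}$. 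Verifying compatibility across $s$ is immediate since the $X_p, Y_p, X'_p, Y'_p$ do not depend on $s$.

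For the strong case, the individual idempotents $e_j$ and $e_k$ are already Morita equivalent, so the decomposition of $e_k$ has only one sum, $e_k = \sum_p X_p Y_p$ with $X_p \in R_{kj}$ and $Y_p \in R_{jk}$; the same shift-by-$s^2$ recipe gives a section of the single-sum map. The only real obstacle is bookkeeping the pro-levels: the factor of $s$ introduced by the bilinear multiplication on homotopes prevents a naive level-preserving section from working, which is why one has to work at source level $s^2$. Once this observation is made, the argument is essentially Morita equivalence.
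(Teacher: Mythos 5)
Your proof is correct and follows essentially the same route as the paper: extract a finite decomposition of an idempotent from the Morita equivalence of the hyperbolic pairs, split it through $e_j + e_{-j}$, and define the section at source level $s^2$ to absorb the extra factor of $s$ coming from the homotope multiplication. The only cosmetic difference is that you decompose $e_k$ and act on the right (putting the variable in the first factors $rX_p$), whereas the paper decomposes $e_i$ and acts on the left; both yield the same structure map $r^{(s^2)} \mapsto (sr)^{(s)}$.
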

\begin{proof}
Let \(e_i = \sum_{l = \pm j} \sum_p x_{lp} y_{lp}\) for some \(x_{lp} \in R_{il}\) and \(y_{lp} \in R_{li}\), such elements exist since \(\eta_i\) and \(\eta_p\) are Morita equivalent. Then the maps \(a^{(s^2)} \mapsto \bigl(x_{lp}^{(s)}, y_{lp} a^{(s)}\bigr)_{l, p}\) give a required section. The proof of the second claim is similar.
\end{proof}

\begin{lemma}\label{form-generation}
Let \(i, j\) be non-zero indices from \(\{-n, \ldots, n\}\). Then there is \(N \geq 0\) such that the morphism
\begin{align*}
\sum_{1 \leq p \leq N}^\cdot u_p^{(\infty)} \cdot a_p^{(\infty)} &\dotplus \sum_{1 \leq p \leq N}^\cdot v_p^{(\infty)} \cdot b_p^{(\infty)} \dotplus \phi(c^{(\infty)}) \colon\\
&\bigl(\Delta^{0, (\infty, S)}_j \times R_{ji}^{(\infty, S)}\bigr)^N \times \bigl(\Delta^{0, (\infty, S)}_{-j} \times R_{-j, i}^{(\infty, S)}\bigr)^N \times R_{-i, i}^{(\infty, S)} \to \Delta^{0, (\infty, S)}_i
\end{align*}
is a split epimorphism in \(\Pro(\Set)\). If the orthogonal hyperbolic family is strong, then the same is true for
\[\sum_{1 \leq p \leq N}^\cdot u_p^{(\infty)} \cdot a_p^{(\infty)} \dotplus \phi(b^{(\infty)}) \colon \bigl(\Delta^{0, (\infty, S)}_j \times R_{ji}^{(\infty, S)}\bigr)^N \times R_{-i, i}^{(\infty, S)} \to \Delta^{0, (\infty, S)}_i.\]
\end{lemma}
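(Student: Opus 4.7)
The plan is to emulate the proof of lemma \ref{ring-generation}: use Morita equivalence of the hyperbolic pairs to split $e_i$ into a sum passing through the indices $\pm j$, and decompose $w \cdot e_i$ using the distributivity axiom of the odd form ring action. First, since $\eta_i$ is Morita equivalent to $\eta_j$ (both belonging to the same orthogonal hyperbolic family), I write $e_i = \sum_{l = \pm j} \sum_{p=1}^N x_{lp} y_{lp}$ with $x_{lp} \in R_{il}$ and $y_{lp} \in R_{li}$.

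Next, for $w \in \Delta_i^0$, I apply the axiom $u \cdot (a + b) = u \cdot a \dotplus \phi(\inv b \rho(u) a) \dotplus u \cdot b$ repeatedly, in a fixed linear order on the pairs $(l, p)$, to obtain an identity of the shape
\[ w = w \cdot e_i = \sum^{\cdot}_{l,p} (w \cdot x_{lp}) \cdot y_{lp} \dotplus \phi(c_0(w)), \]
where $c_0(w)$ is a bilinear expression in the Morita data and $\rho(w)$, of the form $\sum_{(l,p) > (l',p')} \inv{y_{lp}} \inv{x_{lp}} \rho(w) x_{l'p'} y_{l'p'}$. Since $w = w \cdot e_i$ forces $\rho(w) \in R_{-i, i}$, and each summand of $c_0(w)$ sandwiches $\rho(w)$ between factors in $R_{-i, ?}$ and $R_{?, i}$, it follows that $c_0(w) \in R_{-i, i}$. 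Setting $\tilde u_{lp} = w \cdot x_{lp}$, the formula $\pi(\tilde u_{lp}) = \pi(w) x_{lp} \in R_{0i} \cdot R_{il} \subseteq R_{0l}$ places $\tilde u_{lp}$ in $\Delta^0_l$, so the decomposition already uses only elements from the source pro-set of the claimed section.

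The section is then given by $w^{(s^2)} \mapsto \bigl((\tilde u_{lp}^{(s)}, y_{lp}^{(s)})_{l, p}, (s^2 c_0(w))^{(s)}\bigr)$; composing with the forward morphism, using $\tilde u_{lp}^{(s)} \cdot y_{lp}^{(s)} = (\tilde u_{lp} \cdot s y_{lp})^{(s)}$ and $\phi(c^{(s)}) = \phi(c)^{(s)}$ from lemma \ref{ofa-homotopes}, reproduces $(w \cdot s)^{(s)}$, which is the image of $w^{(s^2)}$ under the structural projection $\Delta^{0, (s^2)}_i \to \Delta^{0, (s)}_i$, so the composite is the identity pro-morphism. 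The strong case is handled identically, except that $e_i$ can already be written using only the intermediate index $j$, so the $-j$ block disappears and only the first summation survives. The principal technical obstacle is the bookkeeping for $c_0(w)$: tracking how the iterated application of distributivity produces the precise ordered sum, verifying that the resulting correction lies in $R_{-i, i}$ rather than a larger subring, and confirming that its representative in the $s$-homotope must scale by $s^2$ in order to match the factor introduced by the homotope action $u^{(s)} \cdot a^{(s)} = (u \cdot sa)^{(s)}$ when the forward morphism is applied.
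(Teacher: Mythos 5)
Your overall strategy is exactly the paper's: decompose \(e_i = \sum_{l = \pm j} \sum_p x_{lp} y_{lp}\) via Morita equivalence, peel \(w \cdot e_i\) apart with the axiom \(u \cdot (a + b) = u \cdot a \dotplus \phi(\inv{\,b\,} \rho(u) a) \dotplus u \cdot b\), and write down an explicit section whose \(\phi\)-correction collects the cross terms. The identification of \(c_0(w)\) as an ordered sum of terms \(\inv{y_{l'p'}}\inv{x_{l'p'}}\rho(w)x_{lp}y_{lp}\) lying in \(R_{-i,i}\), and the observation \(\pi(w \cdot x_{lp}) = \pi(w)x_{lp}\) placing \(w \cdot x_{lp}\) in \(\Delta^0_l\), both match the paper.

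There is, however, a concrete gap: your section is only defined on elements of the form \(w^{(s^2)}\), but these do not exhaust \(\Delta^{0,(s^2)}_i\). By construction the homotope of the \(2\)-step nilpotent module \((\Delta_i^0, \mathcal D_i)\) is generated by the symbols \(w^{(s)}\) \emph{together with} \(\iota(\mathcal D_i^{(s)})\), subject only to \(w^{(s)} = \iota((sw)^{(s)})\) for \(w \in \mathcal D_i\); a general element is \(u^{(s)} \dotplus \iota(v^{(s)})\) and the \(\iota\)-part is not in the image of \(w \mapsto w^{(s)}\). A split epimorphism in \(\Pro(\Set)\) requires a section defined on the whole source, and this is precisely why the paper's section takes the two-component form \(u^{(s^3)} \dotplus \iota(v^{(s^3)}) \mapsto \bigl(\bigl(u^{(s)} \cdot sx_{lp} \dotplus \iota(v^{(s)}) \cdot x_{lp},\, y_{lp}^{(s)}\bigr)_{l,p},\, \sum_{(l,p)<(l',p')} \inv{y_{l'p'}}\inv{x_{l'p'}}(s^5\rho(u) + s^2\rho(v))^{(s)} x_{lp}y_{lp}\bigr)\), with \emph{different} powers of \(s\) attached to \(\rho(u)\) and \(\rho(v)\) so that the formula respects the relation identifying \(w^{(s^3)}\) with \(\iota((s^3 w)^{(s^3)})\). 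Relatedly, your scaling of the correction by \(s^2\) does not come out right once the \(\iota\)-bookkeeping is done: the relation \(m^{(s)} = \iota((sm)^{(s)})\) for \(m \in \mathcal D\) inserts an extra factor of \(s\) when the correction \(\phi(s^2 c_0(w))\) produced inside \(\Delta^{(s)}\) is rewritten through \(\iota\), which is why the paper works at level \(s^3\) rather than \(s^2\). You could instead invoke the isomorphism \(\Delta^{(\infty,S)} \cong \widetilde\Delta^{(\infty,S)}\) from lemma \ref{ofa-homotopes} to reduce to pure elements \(w^{(s)}\), but then the operations change (\(\phi(a^{(s^2)}) = \phi(a)^{(s)}\), no extra \(s\) in the action) and the exponents must be recomputed accordingly; as written, neither justification is present.
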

\begin{proof}
Let \(e_i = \sum_{l = \pm j} \sum_p x_{lp} y_{lp}\) for some \(x_{lp} \in R_{il}\) and \(y_{lp} \in R_{li}\). Then the maps
\begin{align*}
u^{(s^3)} \dotplus \iota(v^{(s^3)}) &\mapsto \Bigl(\bigl(u^{(s)} \cdot sx_{lp} \dotplus \iota(v^{(s)}) \cdot x_{lp}, y_{lp}^{(s)}\bigr)_{l, p},\\
& \quad \sum_{(l, p) < (l', p')} \inv{y_{l'p'}} \inv{x_{l'p'}}\, (s^5 \rho(u) + s^2 \rho(v))^{(s)}\, x_{lp} y_{lp} \Bigr)
\end{align*}
give a required section. The strong case is similar.
\end{proof}

\begin{lemma}\label{perfect}
If \(n \geq 3\), then the Steinberg group \(\stunit(R, \Delta)\) is perfect.
\end{lemma}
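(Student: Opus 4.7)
The plan is to show that every Steinberg generator $X_{ij}(a)$ and $X_k(u)$ lies in the commutator subgroup $[\stunit(R, \Delta), \stunit(R, \Delta)]$, in three reduction steps relying on Morita equivalence of the hyperbolic pairs and the relations (St4), (St5), (St8). The hypothesis $n \geq 3$ enters only in the first step, where an auxiliary index disjoint from $\{\pm i, \pm j\}$ is needed; the other two steps already work for $n \geq 2$.

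\textbf{Short-root generators.} For $a \in R_{ij}$ pick an index $k$ with $0 < |k| \leq n$ and $k \notin \{\pm i, \pm j\}$. Morita equivalence of $e_{|i|}$ and $e_{|k|}$ yields a decomposition $e_i = \sum_p x_p y_p + \sum_p x'_p y'_p$ with $x_p \in R_{ik}$, $y_p \in R_{ki}$, $x'_p \in R_{i, -k}$, $y'_p \in R_{-k, i}$. Then $a = e_i a$ is a sum of products, and (St4) expresses $X_{ij}(a)$ as a product of the commutators $[X_{ik}(x_p), X_{kj}(y_p a)]$ and $[X_{i, -k}(x'_p), X_{-k, j}(y'_p a)]$.

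\textbf{The $\phi$-part of the ultrashort generators.} For $c \in R_{-i, i}$ choose any $j \neq \pm i$. Morita equivalence gives $c = \sum_p a_p b_p + \sum_p a'_p b'_p$ with $a_p \in R_{-i, j}$, $b_p \in R_{j, i}$ and primed analogues using $-j$. Since $\phi$ is additive and $X_i \colon \Delta^0_i \to \stunit(R, \Delta)$ is a homomorphism, $X_i(\phi(c))$ factors as a product of the terms $X_i(\phi(a_p b_p))$ and $X_i(\phi(a'_p b'_p))$, each a commutator by (St5).

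\textbf{General $X_i(u)$.} Given $u \in \Delta^0_i$ and $j \neq \pm i$, Morita equivalence of $e_{|i|}$ and $e_{|j|}$ provides $e_i = \sum_p x_p y_p + \sum_p x'_p y'_p$ with $x_p \in R_{ij}$, $y_p \in R_{ji}$, $x'_p \in R_{i, -j}$, $y'_p \in R_{-j, i}$. Applying the axioms $u \cdot (a + b) = u \cdot a \dotplus \phi(\inv b \rho(u) a) \dotplus u \cdot b$ and $(u \cdot a) \cdot b = u \cdot ab$ to $u = u \cdot e_i$ rewrites $u$ as a $\dotplus$-sum of main terms $v \cdot y$, where $v = u \cdot x_p \in \Delta^0_{\pm j}$ and $y \in \{y_p, y'_p\} \subseteq R_{\pm j, i}$, together with $\phi$-correction terms lying in $\phi(R_{-i, i}) \subseteq \mathcal D_i$, which are absorbed by the previous step. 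For each main term, (St8) with indices $j$ and $i$ reads
\[[X_j(v), X_{ji}(y)] = X_{-j, i}(\rho(v) y)\, X_i(\dotminus v \cdot (-y)),\]
while $\dot 0 = v \cdot (y + (-y))$ gives $\dotminus v \cdot (-y) = v \cdot y \dotplus \phi(-\inv y \rho(v) y)$, so
\[X_i(v \cdot y) = X_{-j, i}(-\rho(v) y)\, [X_j(v), X_{ji}(y)]\, X_i(\phi(\inv y \rho(v) y)),\]
and all three factors lie in the commutator subgroup by the first two steps. The main obstacle is the bookkeeping required to verify that every $\phi$-correction arising from the expansion of $u$ and from the rearrangement of (St8) falls into $\phi(R_{-i, i})$, so that the second step can absorb it.
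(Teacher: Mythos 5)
Your proof is correct and follows essentially the same route as the paper: the paper's argument also writes each short-root generator as a product of commutators via (St4) and the Morita decomposition of $e_i$ (packaged there as Lemma \ref{ring-generation} with $S = \{1\}$), and each $X_i(u)$ via (St2), (St5), (St8) and the decomposition of $\Delta^0_i$ into $\Delta^0_{\pm j} \cdot R_{\pm j, i}$ plus $\phi(R_{-i,i})$ (Lemma \ref{form-generation} with $S = \{1\}$). You have merely unpacked those two lemmas inline, including the $\phi$-correction bookkeeping that the paper leaves implicit.
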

\begin{proof}
Indeed, let \(0 \neq i \neq k \neq 0\) and choose an index \(j\) different from \(0\), \(\pm i\), \(\pm j\). Then \(X_{ik}(*)\) lies in the subgroup generated by \([X_{i, \pm k}(*), X_{\pm k, j}(*)]\) by (St1), (St4), and lemma \ref{ring-generation} applied to \(S = \{1\}\). Similarly, let \(0 \neq i\) and choose an index \(j\) different from \(0\) and \(\pm i\). Then \(X_i(*)\) lies in the subgroup generated by \([T_{\pm j}(*), T_{\pm j, i}(*)]\), \(T_{\pm j, i}(*)\), \([T_{-i, \pm j}(*), T_{\pm j, i}(*)]\) by (St2), (St5), (St8), and lemmas \ref{ring-generation}, \ref{form-generation} applied to \(S = \{1\}\).
\end{proof}

Lemmas \ref{ring-generation} and \ref{form-generation} say that \(R^{(\infty, S)}_{ij}\) and \(\Delta^{0, (\infty, S)}_j\) are ``generated'' by the same objects with other indices. Now we give the presentations of these objects.

\begin{lemma}\label{ring-presentation}
Let \(i, j, k\) be non-zero indices from \(\{-n, \ldots, n\}\), \(G^{(\infty)}\) be a pro-group, \(f_{\pm j} \colon R_{i, \pm j}^{(\infty, S)} \times R_{\pm j, k}^{(\infty, S)} \to G^{(\infty)}\) be morphisms of pro-sets. Then \(f_{\pm j}\) factor through a morphism \(g \colon R_{ik}^{(\infty, S)} \to G^{(\infty)}\) of pro-groups if and only if they satisfy
\begin{itemize}
\item \(\bigl[f_l\bigl(a^{(\infty)}, b^{(\infty)}\bigr), f_{l'}\bigl({a'}^{(\infty)}, {b'}^{(\infty)}\bigr)\bigr] = 1\) for \(l, l' = \pm j\);
\item \(f_l\bigl(a^{(\infty)} + {a'}^{(\infty)}, b^{(\infty)}\bigr) = f_l\bigl(a^{(\infty)}, b^{(\infty)}\bigr)\, f_l\bigl({a'}^{(\infty)}, b^{(\infty)}\bigr)\) for \(l = \pm j\);
\item \(f_l\bigl(a^{(\infty)}, b^{(\infty)} + {b'}^{(\infty)}\bigr) = f_l\big(a^{(\infty)}, b^{(\infty)}\bigr)\, f_l\bigl(a^{(\infty)}, {b'}^{(\infty)}\bigr)\) for \(l = \pm j\);
\item \(f_l\bigl(a^{(\infty)}, b^{(\infty)}c^{(\infty)}\bigr) = f_{l'}\bigl(a^{(\infty)}b^{(\infty)}, c^{(\infty)}\bigr)\) for \(l, l' = \pm j\), \(a^{(\infty)} \in R_{il}^{(\infty, S)}\), \(b^{(\infty)} \in R_{ll'}^{(\infty, S)}\), \(c^{(\infty)} \in R_{l'k}^{(\infty, S)}\).
\end{itemize}
If the orthogonal hyperbolic family is strong, then the same is true only for \(f_j\) without \(f_{-j}\) and for \(l = l' = j\) in all the formulas.
\end{lemma}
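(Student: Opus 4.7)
The plan is to handle the ``only if'' direction by inspection and to construct $g$ for ``if'' using the Morita decomposition underlying Lemma \ref{ring-generation}. For ``only if'': if $g$ exists with $f_l(a^{(\infty)}, b^{(\infty)}) = g(a^{(\infty)} b^{(\infty)})$, then the image of $g$ is abelian, being the image of the abelian pro-group $R_{ik}^{(\infty, S)}$, which forces the commutation identity, and the other three identities follow directly from the corresponding properties of multiplication in $R^{(\infty, S)}$.

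For ``if'', I would fix elements $x_{lp} \in R_{il}$ and $y_{lp} \in R_{li}$ (for $l = \pm j$, $1 \le p \le N$) with $\sum_{l, p} x_{lp} y_{lp} = e_i$, provided by Morita equivalence of $\eta_i$ and $\eta_j$. The proof of Lemma \ref{ring-generation} gives a section $\sigma$ of the sum-of-products split epimorphism built from these elements. I would then define
$$g = h \circ \sigma, \qquad h\bigl((a_{lp}, b_{lp})_{l, p}\bigr) = \prod_{l, p} f_l(a_{lp}, b_{lp}),$$
with the commutation hypothesis making the product well-defined independently of the order. Bilinearity of $f_l$ in the second slot combined with commutation shows that $g$ is a morphism of pro-groups from the abelian pro-group $R_{ik}^{(\infty, S)}$ to $G^{(\infty)}$.

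The key computation is that $g$ recovers each $f_{l_0}$: for $a^{(\infty)} \in R_{i l_0}^{(\infty, S)}$ and $b^{(\infty)} \in R_{l_0 k}^{(\infty, S)}$ with $l_0 = \pm j$,
$$g\bigl(a^{(\infty)} b^{(\infty)}\bigr) = \prod_{l, p} f_l\bigl(x_{lp}, y_{lp}\, a^{(\infty)}\, b^{(\infty)}\bigr) = \prod_{l, p} f_{l_0}\bigl(x_{lp}\, y_{lp}\, a^{(\infty)}, b^{(\infty)}\bigr),$$
where the second equality is the associativity hypothesis applied to the factorization $y_{lp}(a b) = (y_{lp} a) b$, with middle indices $l$ on the left and $l_0$ on the right. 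Bilinearity in the first slot together with $\sum_{l, p} x_{lp} y_{lp} = e_i$ then collapses the expression to $f_{l_0}(a^{(\infty)}, b^{(\infty)})$. Uniqueness of $g$ follows from Lemma \ref{ring-generation}: two pro-group morphisms whose compositions with the sum-of-products split epimorphism coincide must themselves coincide. The strong case is identical with only $l = j$ and the strong variant of Lemma \ref{ring-generation}.

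The main obstacle is purely bookkeeping: correctly matching the associativity hypothesis to the Morita decomposition so that the shift from $f_l$ to $f_{l_0}$ leaves a product that sums back to $e_i a^{(\infty)}$. Once the indices line up, no delicate pro-set arguments (such as cofinality or comparisons of formal limits) are required.
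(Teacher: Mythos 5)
Your proposal is correct and follows essentially the same route as the paper: the same Morita decomposition \(e_i = \sum_{l,p} x_{lp} y_{lp}\), the same definition \(g(c) = \prod_{l,p} f_l(x_{lp}, y_{lp}c)\) (which is exactly \(h\circ\sigma\) for the section of Lemma \ref{ring-generation}), and the same key computation relating \(g(ab)\) to \(f_{l_0}(a,b)\) via the balancing axiom, merely written in the reverse direction. The only point handled more explicitly in the paper is the reduction to an ordinary group \(G\), which is justified precisely by the uniqueness you derive from Lemma \ref{ring-generation}.
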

\begin{proof}
Suppose that \(f_{\pm j}\) satisfy the formulas (in the other direction the claim is obvious). Note that \(g\) is unique by lemma \ref{ring-generation}, hence we may assume that \(G\) is a group. Let \(e_i = \sum_{l = \pm j} \sum_p x_{lp} y_{lp}\) for some \(x_{lp} \in R_{il}\) and \(y_{lp} \in R_{li}\). Define \(g\) by
\[g\bigl(c^{(s^2)}\bigr) = \prod_{l = \pm j} \prod_p f_l\bigl(x_{lp}^{(s)}, y_{lp} c^{(s)}\bigr)\]
for sufficiently large \(s\). It is easy to see that \(g\) is a homomorphism for large \(s\). Moreover,
\begin{align*}
f_{\pm j}\bigl(a^{(s^2)}, b^{(s^2)}) &= \prod_{l, p} f_{\pm j}\bigl(s x_{lp} y_{lp} a^{(s)}, sb^{(s)}\bigr)\\
&= \prod_{l, p} f_{\pm j}\bigl(x_{lp}^{(s)}, (s^2 y_{lp} a b)^{(s)}\bigr) = g\bigl((s^2 ab)^{(s^2)}\bigr)
\end{align*}
for large \(s\). In the strong case the proof is similar.
\end{proof}

\begin{lemma}\label{form-presentation}
Let \(i, j\) be non-zero indices from \(\{-n, \ldots, n\}\), \(G^{(\infty)}\) be a pro-group, \(f_{\pm j} \colon \Delta^{0, (\infty, S)}_{\pm j} \times R_{\pm j, i}^{(\infty, S)} \to G^{(\infty)}\) and \(g \colon R_{-i, i}^{(\infty, S)} \to G^{(\infty)}\) be morphisms of pro-sets. Then \(f_{\pm j}\) and \(g\) factor through a morphism \(h \colon \Delta^{0, (\infty, S)}_i \to G^{(\infty)}\) of pro-groups if and only if they satisfy
\begin{itemize}
\item \(\bigl[f_l\bigl(u^{(\infty)}, a^{(\infty)}\bigr), g\bigl(b^{(\infty)}\bigr)\bigr] = 1\) for \(l = \pm j\);
\item \(\bigl[f_l\bigl(u^{(\infty)}, a^{(\infty)}\bigr), f_{l'}\bigl(v^{(\infty)}, b^{(\infty)}\bigr)\bigr] = g\bigl(-\inv{a^{(\infty)}} \inv{\pi(u^{(\infty)})} \pi(v^{(\infty)}) b^{(\infty)}\bigr)\) for \(l, l' = \pm j\);
\item \(f_l\bigl(u^{(\infty)} \dotplus {u'}^{(\infty)}, a^{(\infty)}\bigr) = f_l\bigl(u^{(\infty)}, a^{(\infty)}\bigr)\, f_l\bigl({u'}^{(\infty)}, a^{(\infty)}\bigr)\) for \(l = \pm j\);
\item \(f_l\bigl(u^{(\infty)}, a^{(\infty)} + {a'}^{(\infty)}\bigr) = f_l\bigl(u^{(\infty)}, a^{(\infty)}\bigr)\, g\bigl(\inv{{a'}^{(\infty)}} \rho(u^{(\infty)}) a^{(\infty)}\bigr)\, f_l\bigl(u^{(\infty)}, {a'}^{(\infty)}\bigr)\) for \(l = \pm j\);
\item \(f_l\bigl(u^{(\infty)}, a^{(\infty)} b^{(\infty)}\bigr) = f_{l'}\bigl(u^{(\infty)} \cdot a^{(\infty)}, b^{(\infty)}\bigr)\) for \(l, l' = \pm j\), \(u^{(\infty)} \in \Delta^{0, (\infty)}_l\), \(a^{(\infty)} \in R^{(\infty)}_{ll'}\), \(b^{(\infty)} \in R^{(\infty)}_{l'i}\);
\item \(g\bigl(a^{(\infty)} + {a'}^{(\infty)}\bigr) = g\bigl(a^{(\infty)}\bigr)\, g\bigl({a'}^{(\infty)}\bigr)\);
\item \(g\bigl(\inv{a^{(\infty)}}\bigr) = g\bigl(a^{(\infty)}\bigr)^{-1}\);
\item \(f_l\bigl(\phi(a^{(\infty)}), b^{(\infty)}\bigr) = g\bigl(\inv{b^{(\infty)}} a^{(\infty)} b^{(\infty)}\bigr)\) for \(l = \pm j\), \(a^{(\infty)} \in R_{-l, l}^{(\infty)}\), \(b^{(\infty)} \in R^{(\infty)}_{li}\).
\end{itemize}
If the orthogonal hyperbolic family is strong, then the same is true without \(f_{-j}\) and for \(l = l' = j\) in all the formulas.
\end{lemma}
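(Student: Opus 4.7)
The plan is to mirror the proof of Lemma \ref{ring-presentation}, replacing Lemma \ref{ring-generation} by Lemma \ref{form-generation} and carrying substantially more bookkeeping because $(\Delta, \mathcal D)$ is $2$-step nilpotent rather than abelian. Uniqueness of $h$ is immediate: by Lemma \ref{form-generation} the sources of $f_{\pm j}$ and $g$ jointly generate $\Delta^{0,(\infty,S)}_i$ as a split epimorphic image in $\Pro(\Set)$, so two morphisms of pro-groups with the prescribed restrictions must coincide. In view of this we may assume $G^{(\infty)}$ is an abstract group $G$ and construct $h$ one homotope at a time, producing compatible group homomorphisms $h \colon \Delta^{0,(s^N)}_i \to G$ for sufficiently large $N$ as $s$ ranges over a cofinal subset of $S$.

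Using that $\eta_i$ and $\eta_{\pm j}$ are Morita equivalent, choose a decomposition $e_i = \sum_{l = \pm j} \sum_p x_{lp} y_{lp}$ with $x_{lp} \in R_{il}$, $y_{lp} \in R_{li}$, and fix a total order on the index pairs $(l,p)$. Imitating the explicit section from the proof of Lemma \ref{form-generation}, define
\[ h\bigl(u^{(s^3)}\bigr) = \Bigl(\prod_{(l,p)} f_l\bigl((u \cdot s x_{lp})^{(s)}, y_{lp}^{(s)}\bigr)\Bigr) \cdot \prod_{(l,p) < (l',p')} g\bigl((\inv{y_{l'p'}}\, \inv{x_{l'p'}}\, s^5 \rho(u)\, x_{lp} y_{lp})^{(s)}\bigr), \]
and extend the definition to the $\iota(\mathcal D^{(s^3)}_i)$-part in the analogous way, using the hypothesis $f_l(\phi(a), b) = g(\inv b a b)$ to identify the value of $h$ on $\iota$-elements with an element in the image of $g$. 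Compatibility between different values of $s$ is routine from the structure formulas of Lemma \ref{ofa-homotopes} together with the additivity and bilinearity hypotheses on $f_{\pm j}$ and $g$.

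The verification splits into three parts: (a) that $h$ restricts to $f_{\pm j}$ and to $g$ on the appropriate subobjects, (b) that $h$ is compatible with the transition morphisms between different homotopes, and (c) that $h$ is a group homomorphism for $\dotplus$. Steps (a) and (b) are short direct computations using the hypothesized identities $f_l(u, ab) = f_{l'}(u \cdot a, b)$ and $f_l(\phi(a), b) = g(\inv b a b)$ together with additivity of $g$. Step (c) is the main obstacle and is where every one of the eight relations on $f_{\pm j}, g$ enters. To evaluate $h(u \dotplus v)$ one must expand $(u \dotplus v) \cdot s x_{lp}$ via the odd form ring axioms $u \cdot (a + b) = u \cdot a \dotplus \phi(\inv b \rho(u) a) \dotplus u \cdot b$ and $[u, v] = \phi(-\inv{\pi(u)} \pi(v))$, producing a multitude of $\phi$-corrections, and then reorder the resulting product to separate the $u$-factors from the $v$-factors. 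Each such swap introduces a commutator $[f_l(\cdot), f_{l'}(\cdot)]$ that, by the bilinearity hypothesis, lies in $g(R_{-i,i}^{(\infty,S)})$; each $\phi$-correction from expanding $u \cdot (a + b)$ is absorbed by the quadratic hypothesis $f_l(u, a + a') = f_l(u, a)\, g(\inv{a'} \rho(u) a)\, f_l(u, a')$ and by $f_l(\phi(a), b) = g(\inv b a b)$. The careful accounting to show that all the $g$-terms so produced precisely cancel against those already present in $h(u)\,h(v)$ is the heart of the proof. The strong case is entirely analogous, using the simpler section of Lemma \ref{form-generation} with $l = j$ only and invoking only the $l = l' = j$ hypotheses.
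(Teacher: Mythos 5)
Your proposal is correct and follows essentially the same route as the paper: uniqueness via Lemma \ref{form-generation}, reduction to an abstract group, the same Morita decomposition \(e_i = \sum_{l,p} x_{lp} y_{lp}\), and the same explicit formula for \(h\) (the paper merely writes the \(u\)- and \(\iota(v)\)-parts in one combined expression with correction term \(s^5\rho(u) + s^2\rho(v)\), and likewise treats the homomorphism check tersely while spelling out the restriction identities). The only cosmetic quibble is that a morphism out of the pro-object is represented by a single homomorphism at one sufficiently large \(s\), so no separate compatibility check across the cofinal family is needed.
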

\begin{proof}
Suppose that \(f_{\pm j}\) and \(g\) satisfy the formulas (in the other direction the claim is obvious). Note that \(h\) is uniquely determined by lemma \ref{form-generation}, hence we may assume that \(G\) is a group. Let \(e_i = \sum_{l = \pm j} \sum_p x_{lp} y_{lp}\) for some \(x_{lp} \in R_{il}\) and \(y_{lp} \in R_{li}\). Define \(h\) by
\begin{align*}
h\bigl(u^{(s^3)} \dotplus \iota(v^{(s^3)})\bigr) &= \prod_{l, p} f_l\bigl(u^{(s)} \cdot sx_{lp} \dotplus \iota(v^{(s)}) \cdot x_{lp}, y_{lp}^{(s)}\bigr)\\
&\quad g\Bigl(\sum_{(l, p) < (l', p')} \inv{y_{l'p'}} \inv{x_{l'p'}}\, (s^5 \rho(u) + s^2 \rho(v))^{(s)}\, x_{lp} y_{lp}\Bigr)
\end{align*}
for sufficiently large \(s\), where \((l, p)\) runs over \(\{j, -j\} \times \{1, \ldots, N\}\) at the lexicographic order. It is easy to see that \(h\) is a homomorphism for large \(s\). Moreover,
\begin{align*}
f_{\pm j}\bigl(u^{(s^3)} \dotplus \iota(v^{(s^3)}), a^{(s^3)}\bigr) &= \prod_{l, p} f_{\pm j} \bigl(u^{(s^3)} \dotplus \iota(v^{(s^3)}), (a x_{lp} y_{lp})^{(s^3)}\bigr)\\
&\quad g\Bigl(\sum_{(l, p) < (l', p')} \inv{(a x_{l'p'} y_{l'p'})^{(s^3)}}\, \rho(u^{(s^3)} \dotplus \iota(v^{(s^3)}))\, (a x_{lp} y_{lp})^{(s^3)}\Bigr)\\
&= \prod_{l, p} f_l\bigl((u^{(s^3)} \dotplus \iota(v^{(s^3)})) \cdot (s a x_{lp})^{(s)}, y^{(s)}_{lp}\bigr)\\
&\quad g\Bigl(\sum_{(l, p) < (l', p')} \inv{(a x_{l'p'} y_{l'p'})^{(s^3)}}\, \rho(u^{(s^3)} \dotplus \iota(v^{(s^3)}))\, (a x_{lp} y_{lp})^{(s^3)}\Bigr)\\
&= h\bigl((u \cdot s^3 a)^{(s^3)} \dotplus \iota((v \cdot s^3 a)^{(s^3)})\bigr) 
\end{align*}
and
\begin{align*}
g\bigl(a^{(s^3)}\bigr) &= g\Bigl(\sum_{(l, p)} \sum_{(l', p')} \inv{(x_{l'p'} y_{l'p'})^{(s)}}\, a^{(s)}\, (x_{lp} y_{lp})^{(s)}\Bigr)\\
&= \prod_{l, p} f_l\bigl(\iota(\phi(\inv{x_{lp}} a x_{lp})^{(s)}), y_{lp}^{(s)}\bigr)\\
&\quad g\Bigl(\sum_{(l, p) < (l', p')} \inv{(x_{l'p'} y_{l'p'})^{(s)}}\, (a - \inv a)^{(s)}\, (x_{lp} y_{lp})^{(s)}\Bigr)\\
&= h\bigl(\iota(\phi(a)^{(s^3)})\bigr)
\end{align*}
for large \(s\). The strong case is similar.
\end{proof}

\section{Elimination of roots: surjectivity}

Let \(S \subseteq K\) be a multiplicative subset. We are ready to prove that the morphism \(F_\alpha \colon \stunit^{(\infty, S)}(R, \Delta; \Phi / \alpha) \to \stunit^{(\infty, S)}(R, \Delta; \Phi)\) of pro-groups is an isomorphism for any \(\alpha\) if \(n\) is sufficiently large.

In the proofs we often implicitly use the group-theoretic identities \([xy, z] = \up x{[y, z]}\, [x, z]\), \([x, yz] = [x, y]\, \up y{[x, z]}\), \(\up x{[y, z]} = [\up xy, \up xz]\), \([x, y] = \up xy\, y^{-1} = x\, (\up yx)^{-1}\), and \([x, y]^{-1} = [y, x]\).

\begin{lemma}\label{root-generation}
Suppose that \(n \geq 3\) and \(\alpha \in \Phi\) is a non-long root. Then \(F_\alpha \colon \stunit^{(\infty, S)}(R, \Delta; \Phi / \alpha) \to \stunit^{(\infty, S)}(R, \Delta; \Phi)\) is an epimorphism of pro-groups. If \(\Phi / \{\alpha, \beta\}\) is defined and in addition the orthogonal hyperbolic family is strong or \(\beta\) is ultrashort, then \(F_{\{\alpha, \beta\}} \colon \stunit^{(\infty, S)}(R, \Delta; \Phi / \{\alpha, \beta\}) \to \stunit^{(\infty, S)}(R, \Delta; \Phi)\) is also an epimorphism of pro-groups.
\end{lemma}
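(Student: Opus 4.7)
The plan is to show that every root subgroup $X_\gamma(*)$ of the target pro-group $\stunit^{(\infty, S)}(R, \Delta; \Phi)$ arises, as a morphism of pro-sets, through the image of $F_\alpha$. Since $\stunit^{(\infty, S)}(R, \Delta; \Phi)$ is generated by its root subgroups, this suffices for $F_\alpha$ to be a pro-group epimorphism. Depending on the position of $\gamma$ relative to $\alpha$, the argument splits into a ``generic'' case and an ``eliminated root'' case.

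For a root $\gamma \notin \mathbb{R} \alpha$, the explicit formula for $F_\alpha$ expresses $F_\alpha(X_{[\gamma]})$ as a product over $\{\gamma + i\alpha\} \cap \Phi$, and a judicious choice of parameter collapses this product to $X_\gamma$ alone. For $\alpha = \mathrm{e}_n - \mathrm{e}_{n-1}$ short, $X_{i, n-1}(*)$ comes from $F_\alpha(X_{i\infty}(*))$ restricted to parameters in $e_i R e_{n-1} \subseteq e_i R e_\infty$, and $X_{n-1}(*)$ from $F_\alpha(X_\infty(*))$ applied to $v \in \Delta_{n-1}^{0, (\infty)}$ viewed inside $\Delta_\infty^{0, (\infty)}$: the identities $v \cdot e_n = \dot 0$ and $e_{-n} \rho(v) e_{n-1} = 0$ kill the two unwanted factors. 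For $\alpha = \mathrm{e}_1$ ultrashort, $X_{\pm 1, k}(a) = F_\alpha(X_k(q_{\pm 1} \cdot a))$ because the correction terms cancel $q_{\pm 1} \cdot a$ entirely and only one of the two $X_{\pm 1, k}$ factors survives.

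The only remaining root subgroups are $X_{\pm \alpha}(*)$. For $\alpha$ short I pick an auxiliary index $j$ with $0 < |j| \leq n - 2$, which exists since $n \geq 3$, and use (St4) in the form $[X_{n-1, j}(a), X_{j, n}(b)] = X_{n-1, n}(ab)$; both commutator arguments are in the image by the previous step. Lemma~\ref{ring-generation} supplies a split epimorphism in $\Pro(\Set)$ onto $R_{n-1, n}^{(\infty, S)}$ from a finite power of $R_{n-1, \pm j}^{(\infty, S)} \times R_{\pm j, n}^{(\infty, S)}$, so composing the commutator morphism with this section realises $X_{n-1, n}$ through $F_\alpha$. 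For $\alpha$ ultrashort, I rewrite (St8) as $X_1(u \cdot a) = X_{-2, 1}(\rho(u) a)^{-1}\, [X_2(u), X_{21}(a)]$ and (St5) as $X_1(\phi(ab)) = [X_{-1, 2}(a), X_{2, 1}(b)]$; every ingredient is in the image, and lemma~\ref{form-generation} assembles any element of $\Delta_1^{0, (\infty, S)}$ as a pro-set combination of such pieces. The $X_{-\alpha}$ side is symmetric.

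For the second statement I factor $F_{\{\alpha, \beta\}} = F_\alpha \circ F_{[\beta]}$, where $F_{[\beta]}$ is the $[\beta]$-elimination inside $\Phi/\alpha$; a composition of epimorphisms is an epimorphism. The first statement handles $F_\alpha$, and for $F_{[\beta]}$ I run the same case analysis in the reduced rank $n - 1$. The auxiliary-index step for a short $[\beta]$ usually needs rank $n - 1 \geq 3$, and this is the only place where $n = 3$ would cause trouble. The additional hypotheses remove the obstruction: if $\beta$ is ultrashort only one auxiliary index is needed (rank $\geq 2$ suffices); if the orthogonal hyperbolic family is strong, the one-sided versions of lemmas~\ref{ring-generation} and~\ref{form-generation} let the same commutator argument go through in the smaller system. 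The main obstacle is the pro-set bookkeeping of the eliminated-root case: the commutator identities produce unwanted side factors that must be cancelled against other image elements, and the split sections from the generation lemmas must be composed coherently so that the resulting morphism factors through $F_\alpha$ at each pro-level rather than just as an abstract group homomorphism.
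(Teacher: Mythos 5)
Your treatment of the first claim coincides with the paper's: check that every generator $X_\gamma$ with $\gamma \notin \mathbb{R}\alpha$ factors through $F_\alpha$ by specializing parameters in the defining formula, and recover $X_{\pm\alpha}$ from the commutator identities (St4), (St5), (St8) combined with Lemmas~\ref{steinberg-presentation}, \ref{ring-generation}, \ref{form-generation}. That part is fine.

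For the second claim, however, you take a genuinely different route — factoring $F_{\{\alpha,\beta\}} = F_\alpha \circ F_{[\beta]}$ and applying the first claim to $F_{[\beta]}$ in rank $n-1$ — and this has a gap precisely in the case the extra hypotheses are designed for: $n = 3$ with $\alpha, \beta$ both short spanning a subsystem of type $\mathsf A_2$ (strong family). There $\Phi/\alpha$ has rank $2$, with indices $\pm\infty, \pm 3$ only, and the short root subgroup $X_{[\beta]}(*) = X_{\infty,3}(*)$ admits \emph{no} auxiliary index $j$ with $j \neq 0, \pm\infty, \pm 3$; the one-sided version of Lemma~\ref{ring-generation} does not rescue the commutator $[X_{\infty j}, X_{j3}] = X_{\infty 3}$, because the only candidates $j = -\infty$ or $j = -3$ force one factor to be a long-root element ($X_{\infty,-\infty}$ and $X_{-3,3}$ are not generators, as $i = \pm j$ there). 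One would have to substitute the mixed relation (St8) with $u = \phi(c)$, $c \in R_{\infty,-\infty}$, and then separately argue that elements $(c - \inv{c})b$ additively generate $R_{\infty,3}$ — an argument that secretly uses the finer rank-$3$ decomposition $R_{\infty,-\infty} = \bigoplus_{i,j\in\{1,2\}} R_{i,-j}$. The paper avoids all of this by never descending to the rank-$2$ system: it verifies directly that each generator of $\stunit^{(\infty,S)}(R,\Delta;\Phi)$ factors through the composite $F_{\{\alpha,\beta\}}$, using for the $\mathsf A_2$ case the identity $[X_{n-1,-1}(a), X_{-1,n}(b)] = X_{n-1,n}(ab)$, whose two factors correspond to the roots $-\mathrm e_1 - \mathrm e_{n-1}$ and $\mathrm e_1 + \mathrm e_n$; these lie \emph{outside} $\mathbb R\alpha + \mathbb R\beta$ even though every index $1, \dots, n$ is already consumed by $\alpha$ and $\beta$, and the strong hypothesis enters only to permit the single auxiliary index $-1$ in Lemma~\ref{ring-generation}. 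Either repair your rank-$2$ argument along these lines or switch to the paper's direct verification for this case; the remaining cases ($\mathsf A_1 \times \mathsf A_1$, which forces $n \geq 4$, and the two cases with $\beta$ ultrashort) go through as you describe.
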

\begin{proof}
To prove the first claim, note that any generator \(X_\gamma\) of \(\stunit^{(\infty, S)}(R, \Delta; \Phi)\) factors through \(F_\alpha\) unless \(\gamma\) and \(\alpha\) are collinear. By symmetry, it suffices to consider only the case \(\gamma = \alpha\). If \(\alpha\) is short, then without loss of generality \(\alpha = \mathrm e_n - \mathrm e_{n - 1}\). Then
\[\bigl[X_{n - 1, \pm 1}\bigl(a^{(\infty)}\bigr), X_{\pm 1, n}\bigl(b^{(\infty)}\bigr)\bigr] = X_{n - 1, n}\bigl(a^{(\infty)} b^{(\infty)}\bigr),\]
hence we are done by lemmas \ref{steinberg-presentation} and \ref{ring-generation}. If \(\alpha\) is ultrashort, then without loss of generality \(\alpha = \mathrm e_1\). Then
\begin{align*}
X_{\mp 2, 1}\bigl(-\inv{\rho(u^{(\infty)})} b^{(\infty)}\bigr)\, \bigl[X_{\pm 2}\bigl(\dotminus u^{(\infty)}\bigr), X_{\pm 2, 1}\bigl(-b^{(\infty)}\bigr)\bigr] &= X_1\bigl(u^{(\infty)} \cdot b^{(\infty)}\bigr),\\ 
\bigl[X_{-1, \pm 2}\bigl(a^{(\infty)}\bigr), X_{\pm 2, 1}\bigl(b^{(\infty)}\bigr)\bigr] &= X_1\bigl(\phi(a^{(\infty)} b^{(\infty)})\bigr),
\end{align*}
so we are done by lemmas \ref{steinberg-presentation}, \ref{ring-generation}, and \ref{form-generation}.

Now suppose that \(\Phi / \{\alpha, \beta\}\) is defined. Any generator \(X_\gamma\) of \(\stunit^{(\infty, S)}(R, \Delta; \Phi)\) factors through \(\stunit^{(\infty, S)}(R, \Delta; \Phi / \{\alpha, \beta\})\) unless \(\gamma\) lies in the span of \(\alpha\) and \(\beta\). We again apply lemmas \ref{steinberg-presentation}, \ref{ring-generation}, and \ref{form-generation}. If \(\Phi \cap (\mathbb R \alpha + \mathbb R\beta)\) is of type \(\mathsf A_2\), then without loss of generality \(\alpha = \mathrm e_{n - 1} - \mathrm e_{n - 2}\) and \(\beta = \gamma = \mathrm e_n - \mathrm e_{n - 1}\), so we use the identity
\[\bigl[X_{n - 1, -1}\bigl(a^{(\infty)}\bigr), X_{-1, n}\bigl(b^{(\infty)}\bigr)\bigr] = X_{n - 1, n}\bigl(a^{(\infty)} b^{(\infty)}\bigr)\]
(here we need that the orthogonal hyperbolic family is strong). If \(\Phi \cap (\mathbb R\alpha + \mathbb R \beta)\) is of type \(\mathsf A_1 \times \mathsf A_1\), then \(n \geq 4\) and we may apply the first claim twice.

Suppose that \(\Phi \cap (\mathbb R \alpha + \mathbb R\beta)\) is of type \(\mathsf A_1 \times \mathsf{BC}_1\). Then without loss of generality \(\alpha = \mathrm e_n - \mathrm e_{n - 1}\), \(\beta = \mathrm e_1\), and \(\gamma\) coincide with one of them. If \(\gamma = \alpha\), then we have the identity
\[\bigl[X_{n - 1, \pm 1}\bigl(a^{(\infty)}\bigr), X_{\pm 1, n}\bigl(b^{(\infty)}\bigr)\bigr] = X_{n - 1, n}\bigl(a^{(\infty)} b^{(\infty)}\bigr).\]
If \(\gamma = \beta\), then there are the identities \begin{align*}
X_{\mp 2, 1}\bigl(-\inv{\rho(u^{(\infty)})} b^{(\infty)}\bigr)\, \bigl[X_{\pm 2}(\dotminus u^{(\infty)}\bigr), X_{\pm 2, 1}\bigl(-b^{(\infty)}\bigr)\bigr] &= X_1\bigl(u^{(\infty)} \cdot b^{(\infty)}\bigr),\\
\bigl[X_{-1, \pm 2}\bigl(a^{(\infty)}\bigr), X_{\pm 2, 1}\bigl(b^{(\infty)}\bigr)\bigr] &= X_1\bigl(\phi(a^{(\infty)} b^{(\infty)})\bigr).
\end{align*}

Finally, suppose that \(\Phi \cap (\mathbb R\alpha + \mathbb R\beta)\) is of type \(\mathsf{BC}_2\). Without loss of generality, \(\alpha = \mathrm e_n - \mathrm e_{n - 1}\), \(\beta = \mathrm e_{n - 1}\), and \(\gamma\) coincides with one of them. If \(\gamma = \alpha\), then
\[\bigl[X_{n - 1, \pm 1}\bigl(a^{(\infty)}\bigr), X_{\pm 1, n}\bigl(b^{(\infty)}\bigr)] = X_{n - 1, n}\bigl(a^{(\infty)} b^{(\infty)}\bigr).\]
If \(\gamma = \beta\), then we apply the identities
\begin{align*}
X_{\mp 1, n - 1}\bigl(-\inv{\rho(u^{(\infty)})} b^{(\infty)}\bigr)\, \bigl[X_{\pm 1}\bigl(\dotminus u^{(\infty)}\bigr), X_{\pm 1, n - 1}\bigl(-b^{(\infty)}\bigr)\bigr] &= X_{n - 1}\bigl(u^{(\infty)} \cdot b^{(\infty)}\bigr),\\
\bigl[X_{1 - n, \pm 1}\bigl(a^{(\infty)}\bigr), X_{\pm 1, n - 1}\bigl(b^{(\infty)}\bigr)\bigr] &= X_{n - 1}\bigl(\phi(a^{(\infty)} b^{(\infty)})\bigr).\qedhere
\end{align*}
\end{proof}

The proof that \(F_\alpha\) is an isomophism is divided in to cases: when \(\alpha\) is short and when \(\alpha\) is ultrashort. The case of short \(\alpha\) is harder, this is done in the next section. To handle this case for \(n = 3\) and strong orthogonal hyperbolic family (say, for the Chevalley groups of types \(\mathsf B_3\) and \(\mathsf C_3\)) we need the following technical lemma. Its proof significantly simplifies if the orthogonal hyperbolic family is also free. Also, the short root case for \(n \geq 4\) follows from the ultrashort root case applied twice.

\begin{lemma}\label{associator}
Suppose that \(n \geq 3\) and the orthogonal hyperbolic family is strong. Let \(G^{(\infty)}\) be a pro-group and
\[\{-, =, \equiv\}_{ij} \colon R^{(\infty, S)}_{2i} \times R^{(\infty, S)}_{ij} \times R^{(\infty, S)}_{j3} \to G^{(\infty)}\]
be morphisms of pro-sets for \(i, j \in \{-1, 1\}\) (``associators''). Suppose that these morphisms satisfy
\begin{enumerate}[label = (A\arabic*)]
\item \(\bigl[\{a^{(\infty)}, b^{(\infty)}, c^{(\infty)}\}_{ij}, \{{a'}^{(\infty)}, {b'}^{(\infty)}, {c'}^{(\infty)}\}_{kl}\bigr] = 1\);
\item \(\{-, =, \equiv\}_{ij}\) are triadditive;
\item \(\{a^{(\infty)}, b^{(\infty)} c^{(\infty)}, d^{(\infty)}\}_{ik} = \{a^{(\infty)} b^{(\infty)}, c^{(\infty)}, d^{(\infty)}\}_{jk}\, \{a^{(\infty)}, b^{(\infty)}, c^{(\infty)} d^{(\infty)}\}_{ij}\);
\item \(\{a^{(\infty)}, b^{(\infty)}, c^{(\infty)}\}_{-i, i} = \{a^{(\infty)}, \inv{b^{(\infty)}}, c^{(\infty)}\}_{-i, i}\);
\item \(\{a^{(\infty)}, b^{(\infty)}, \inv{a^{(\infty)}} c^{(\infty)}\}_{-i, i} = 1\);
\item \(\{a^{(\infty)}, b^{(\infty)} c^{(\infty)}, d^{(\infty)}\}_{ij} = \{\inv{b^{(\infty)}}, \inv{a^{(\infty)}} c^{(\infty)}, d^{(\infty)}\}_{-i, j}\);
\item \(\{a^{(\infty)}, b^{(\infty)}, c^{(\infty)} d^{(\infty)}\}_{ij} = \{\inv{c^{(\infty)}}, \inv{b^{(\infty)}}, \inv{a^{(\infty)}} d^{(\infty)}\}_{-j, -i}^{-1}\)
\end{enumerate}
when both sides are defined. Then \(\{a^{(\infty)}, b^{(\infty)}, c^{(\infty)}\}_{ij} = 1\) for all \(i, j \in \{-1, 1\}\).
\end{lemma}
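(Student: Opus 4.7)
By (A1) all associators commute pairwise, so their images lie in an abelian pro-subgroup of $G^{(\infty)}$; combined with (A2), this lets me treat each $\{-,=,\equiv\}_{ij}$ as a triadditive morphism into an abelian pro-group, written additively and denoted $F_{ij}$. The plan is to reduce to a single subscript pair via (A6) and (A7), then kill that subscript using (A5) and its linearization.

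\textbf{Reduction to one subscript.} Using strong Morita equivalence of $e_i$ with $e_{-2}$, I fix $e_i = \sum_p x_p y_p$ with $x_p \in R_{i,-2}$ and $y_p \in R_{-2,i}$. Any $b \in R_{ij}$ then splits as $b = \sum_p x_p(y_p b)$ with $x_p \in R_{i,-2}$ and $y_p b \in R_{-2,j}$, and (A6) rewrites $F_{ij}(a,b,c) = \sum_p F_{-i,j}(\inv{x_p}, \inv a (y_p b), c)$. A symmetric application of (A7) on the third argument rewrites any $F_{ij}$ as a sum of $F_{-j,-i}$-values. These two moves act transitively on the four subscripts $(i,j) \in \{-1,1\}^2$, so it suffices to show $F_{-i,i} \equiv 0$ for one choice of $i$; the other three associators then vanish by propagation.

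\textbf{Killing $F_{-i,i}$.} First I linearize (A5): substituting $a+a'$ for $a$ and using triadditivity gives the antisymmetry
\[ F_{-i,i}(a, b, \inv{a'} c) + F_{-i,i}(a', b, \inv a c) = 0, \]
and applying (A7) together with (A4) produces the same relation, confirming consistency. Next, using strong Morita of $e_i$ with $e_{-2}$, I fix a decomposition $e_i = \sum_p \inv{a_p} s_p$ with $a_p \in R_{2,-i}$ and $s_p \in R_{-2,i}$, so every $c \in R_{i,3}$ expands as $c = \sum_p \inv{a_p}(s_p c)$. Triadditivity plus the antisymmetry yields
\[ F_{-i,i}(a,b,c) = -\sum_p F_{-i,i}(a_p, b, \inv a (s_p c)), \]
trading the generic first argument for elements from the fixed Morita decomposition. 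To finish, I apply (A3) to split products inside each summand, always choosing splittings whose intermediate index stays in $\{-1,1\}$ so that the axioms apply, and combine with (A4), (A6), (A7) to rewrite every term as either an (A5)-annihilated $F_{-k,k}(x, b', \inv x y)$ or as a pair cancelling under the antisymmetry.

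\textbf{Main obstacle.} The hardest part will be the combinatorial bookkeeping in the final step: iterated Morita decompositions produce nested double and triple sums whose terms must be matched up precisely for the cancellation to close. Every intermediate application of (A3) must be set up so the middle index lies in $\{-1,1\}$, since that is the only range in which the axioms are assumed. The strong Morita hypothesis is essential throughout, as (A5), (A6), and (A7) all require factorizations through $e_{-2}$; without it the transitive action on subscripts breaks down and the reduction to $F_{-i,i}$ is lost.
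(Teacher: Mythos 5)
Your opening moves reproduce the first part of the paper's argument: abelianizing via (A1), polarizing (A5) into the antisymmetry relation \(F_{-i,i}(a,b,\inv{a'}c) + F_{-i,i}(a',b,\inv a c) = 0\), and using (A6), (A7) together with Morita decompositions to connect all four subscript pairs, so that it suffices to kill a single \(F_{-1,1}\) — this is essentially the paper's derivation of its auxiliary relations (A8)--(A10). But the proposal stops exactly where the real difficulty begins. After your expansion \(F_{-i,i}(a,b,c) = -\sum_p F_{-i,i}(a_p, b, \inv a(s_p c))\) the individual terms have a first argument \(a_p\) and a third argument \(\inv a(s_p c)\) that share no common factor \(x\) versus \(\inv x\), so (A5) does not annihilate them, and the antisymmetry applied again merely swaps \(a_p\) back with \(a\), returning you to the starting point. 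A generic Morita decomposition \(e_i = \sum_p x_p y_p\) carries no orthogonality between distinct values of \(p\), so when you split the first, second, and third arguments simultaneously you get cross terms indexed by several independent \(p\)'s, and there is no mechanism in (A1)--(A7) that makes these vanish or pair off. The claim that every term can be rewritten "as either an (A5)-annihilated term or as a pair cancelling under the antisymmetry" is precisely the assertion to be proved, and the proposal offers no argument for it.

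The paper closes this gap with an idea that is absent from your plan. It first shows, from (A8), (A9) and lemma \ref{ring-generation}, that the associators are unchanged when an additive commutator \(xy - yx\) of \(R_{22}\) is inserted inside any of the three arguments; this permits passing to the commutative quotient \(\widetilde R_{22} = R_{22}/I\). Over that commutative ring \(\widetilde R_{21}\) is a finite projective module which is Zariski-locally free, and this yields elements \(f_t, g_t \in R_{22}\), \(u_{tp} \in R_{21}\), \(v_{tp} \in R_{12}\) with \(u_{tp} v_{tq} \equiv \delta_{pq} f_t \pmod I\) and \(\sum_t f_t g_t \equiv 1 \pmod I\). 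It is exactly this approximate orthogonal dual basis that forces every term of the final expansion \(\{a\inv{v_{tp}}, \inv{u_{tq}}\, b\, u_{tq'}, v_{tp'} c\}_{-1,1}\) into one of four cases, each killed by (A3), (A4), or (A10). Without the commutator-quotient and local-freeness step your cancellation scheme does not close, so the "combinatorial bookkeeping" you defer is not bookkeeping but the missing core of the proof.
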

\begin{proof}
Without loss of generality, \(G\) is an ordinary group. We always use (A1) and (A2) without explicit references. First of all, (A5)--(A7) together with lemma \ref{ring-generation} imply that
\begin{align}
\{(ax)^{(s)}, (\inv y b)^{(s)}, c^{(s)}\}_{ij} &= \{(ay)^{(s)}, (\inv x b)^{(s)}, c^{(s)}\}_{-i, j},  \tag{A8}\\
\{a^{(s)}, (bx)^{(s)}, (\inv y c)^{(s)}\}_{ji} &= \{a^{(s)}, (by)^{(s)}, (\inv x c)^{(s)}\}_{j, -i}, \tag{A9}\\
\{(ax)^{(s)}, b^{(s)}, (\inv xc)^{(s)}\}_{i, -i} &= 1 \tag{A10}
\end{align}
for all \(x, y \in R_{2i}\) and sufficiently large \(s \in S\). From (A8), (A9), and lemma \ref{ring-generation} we get
\begin{align*}
\{(axyb)^{(s)}, c^{(s)}, d^{(s)}\}_{ij} &= \{(ayxb)^{(s)}, c^{(s)}, d^{(s)}\}_{ij},\\
\{a^{(s)}, (bxyc)^{(s)}, d^{(s)}\}_{ij} &= \{a^{(s)}, (byxc)^{(s)}, d^{(s)}\}_{ij},\\
\{a^{(s)}, b^{(s)}, (cxyd)^{(s)}\}_{ij} &= \{a^{(s)}, b^{(s)}, (cyxd)^{(s)}\}_{ij}
\end{align*}
for \(x, y \in R_{22}\) or \(x, y \in R_{-2, -2}\) and sufficiently large \(s \in S\).

Let \(\widetilde R_{22}\) be the factor-ring of \(R_{22}\) by the ideal \(I\) generated by all additive commutators \([x, y] = xy - yx\). Then \(\widetilde R_{22}\) is a unital commutative ring and \(\widetilde R_{21} = R_{21} / IR_{21}\) is a finite projective \(\widetilde R_{22}\)-module with the dual \(\widetilde R_{12} = R_{12} / R_{12} I\). Locally in the Zarisky topology on \(\widetilde R_{22}\) the module \(\widetilde R_{21}\) is free. Hence there are elements \(f_t \in R_{22}\), \(g_t \in R_{22}\), \(u_{tp} \in R_{21}\), \(v_{tp} \in R_{12}\) for \(1 \leq t \leq M\), \(1 \leq p \leq N_t\) such that
\begin{align*}
u_{tp} v_{tq} \equiv 0 &\pmod I \text{ for }p \neq q, &
\sum_p v_{tp} u_{tp} \equiv \sum_r x_r f_t y_r &\pmod{R_{12}IR_{21}},\\
u_{tp} v_{tp} \equiv f_t &\pmod I, &
\sum_t f_t g_t \equiv \sum_t g_t f_t \equiv 1 &\pmod I,
\end{align*}
where \(x_r \in R_{12}\) and \(y_r \in R_{21}\) are fixed elements with \(\sum_r x_r y_r = 1\). Note that \(ab \equiv \bigl(\sum_r x_r b y_r\bigr) a \pmod{R_{12} I}\) and \(bc \equiv c \bigl(\sum_r x_r b y_r \bigr) \pmod{I R_{21}}\) for all \(a \in R_{12}\), \(b \in R_{22}\), \(c \in R_{21}\).

We have to prove that \(\{a^{(\infty)}, b^{(\infty)}, c^{(\infty)}\}_{-1, 1} = 1\), then the other cases follow from (A8), (A9), and lemma \ref{ring-generation}. Inserting \(\sum_r x_r y_r\) or its inverse near the commas and further inserting \(f_t\) between \(x_r\) and \(y_r\) (using \(\sum_t f_t g_t \equiv 1\) and that we may ``swap'' \(g_t\) with \(x_r\) or \(y_r\)), we reduce to the case \(\{a^{(\infty)} \inv{v_{tp}}, \inv{u_{tq}} b^{(\infty)} u_{tq'}, v_{tp'} c^{(\infty)}\}_{-1, 1}\) for various \(t, p, p', q, q'\). If \(p \neq q\) and \(p' \neq q'\), then this morphism is trivial by lemma \ref{ring-generation} and (A3). In the case \(p \neq q'\) and \(p \neq q'\) the morphism is also trivial by (A4). The case \(p = p'\) follows form (A10), and the last case \(q = q'\) follows from (A8)--(A10).
\end{proof}

\section{Elimination of roots: the short root case}

\begin{prop}\label{short-presentation}
Suppose that \(n \geq 4\) or \(n \geq 3\) and the orthogonal hyperbolic family is strong. Then for every short root \(\alpha\) the morphism \(F_\alpha \colon \stunit^{(\infty, S)}(R, \Delta; \Phi / \alpha) \to \stunit^{(\infty, S)}(R, \Delta; \Phi)\) is an isomorphism of pro-groups.
\end{prop}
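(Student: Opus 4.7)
\noindent\emph{Proof plan.} By Lemma \ref{root-generation}, $F_\alpha$ is already an epimorphism of pro-groups, so the plan is to construct a section $G \colon \stunit^{(\infty, S)}(R, \Delta; \Phi) \to \stunit^{(\infty, S)}(R, \Delta; \Phi / \alpha)$ with $G \circ F_\alpha = \id$. By the Weyl-group action on root subgroups one reduces to $\alpha = \mathrm e_n - \mathrm e_{n - 1}$, so that $\Phi / \alpha$ corresponds to the family $\eta_1, \ldots, \eta_{n - 2}, \eta_\infty$ with $\eta_\infty = \eta_{n - 1} \oplus \eta_n$. By Lemma \ref{steinberg-presentation} defining $G$ amounts to specifying, for each root $\gamma \in \Phi$, a morphism of pro-sets $g_\gamma \colon X_\gamma(*) \to \stunit^{(\infty, S)}(R, \Delta; \Phi / \alpha)$ and checking that the images satisfy (St0)--(St8).

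For most generators the definition will be natural. Send $X_{ij}(a)$ with $i, j \notin \{\pm(n - 1), \pm n\}$ to the evident element of the target; send $X_{i, n - 1}(a), X_{i, n}(a)$ and their transposes to $X_{i, \infty}(a)$ or $X_{\infty, i}(a)$ via the inclusions $R_{i, n - 1}, R_{i, n} \subseteq R_{i, \infty}$; send $X_{n - 1}(u), X_n(u)$ to $X_\infty(u)$ via $\Delta^0_{n - 1}, \Delta^0_n \subseteq \Delta^0_\infty$; and send $X_{-n, n - 1}(a)$, whose root becomes the long root $2 \mathrm e_\infty$ in $\Phi / \alpha$, to $X_\infty(\phi(a))$, using $\phi(a) \cdot e_{n - 1} = \phi(a) \cdot e_n = \dot 0$ and $e_{-n} \rho(\phi(a)) e_{n - 1} = a$. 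The defining formulas of $F_\alpha$ make $F_\alpha \circ G$ the identity on each such generator. The real obstacle is defining $G$ on the eliminated root subgroups $X_{\pm\alpha}(*)$: no single generator of the target maps under $F_\alpha$ onto $X_{n - 1, n}(a)$, so one must lift via commutators.

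Guided by $[X_{n - 1, l}(b), X_{l, n}(c)] = X_{n - 1, n}(bc)$ in $\stunit^{(\infty, S)}(R, \Delta; \Phi)$, the plan is to set $g_\alpha^{(l)}(b, c) = [X_{\infty, l}(b), X_{l, \infty}(c)]$ for $b \in R_{n - 1, l}^{(\infty, S)}$, $c \in R_{l, n}^{(\infty, S)}$ and to glue these via Lemma \ref{ring-presentation} into $g_\alpha \colon R_{n - 1, n}^{(\infty, S)} \to \stunit^{(\infty, S)}(R, \Delta; \Phi / \alpha)$. Because $[X_{\infty, l}(\cdot), X_{l, \infty}(\cdot)]$ is a commutator of opposite root subgroups, governed by no single relation from (St0)--(St8), each of the hypotheses of Lemma \ref{ring-presentation} --- commutation, biadditivity in each variable, and associativity across the middle index --- requires nontrivial verification, and all of the discrepancies reduce to trilinear maps on products of three small $R_{ij}$'s.

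For $n \geq 4$ I would sidestep this calculation by invoking the ultrashort-root case, proved in the next section. Taking $\beta = \mathrm e_{n - 1}$, one checks that $\Phi / \{\alpha, \beta\}$ is defined, that $[\beta] = \mathrm e_\infty$ is ultrashort in $\Phi / \alpha$, and that $[\alpha] = \mathrm e_n$ is ultrashort in $\Phi / \beta$. The commuting square
\[
\xymatrix{
\stunit^{(\infty, S)}(R, \Delta; \Phi / \{\alpha, \beta\}) \ar[r]^-{F_{[\beta]}} \ar[d]_-{F_{[\alpha]}} & \stunit^{(\infty, S)}(R, \Delta; \Phi / \alpha) \ar[d]^-{F_\alpha} \\
\stunit^{(\infty, S)}(R, \Delta; \Phi / \beta) \ar[r]_-{F_\beta} & \stunit^{(\infty, S)}(R, \Delta; \Phi)
}
\]
will then have three sides equal to ultrashort eliminations at ranks $n$ and $n - 1 \geq 3$, forcing $F_\alpha$ to be an isomorphism. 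For $n = 3$ with the orthogonal hyperbolic family strong the reduction is unavailable, and I would instead invoke Lemma \ref{associator}: the discrepancies of $g_\alpha^{(l)}$ from the axioms of Lemma \ref{ring-presentation} assemble into trilinear associators $\{b, c, d\}_{ij} \colon R_{2 i}^{(\infty, S)} \times R_{ij}^{(\infty, S)} \times R_{j 3}^{(\infty, S)} \to \stunit^{(\infty, S)}(R, \Delta; \Phi / \alpha)$ for $i, j \in \{\pm 1\}$ (where $n - 1 = 2$ and $n = 3$). Verifying axioms (A1)--(A7) by direct manipulation of the Steinberg relations together with (St0) and the defining formula of $F_\alpha$ is the most substantive step of the whole proof; Lemma \ref{associator} then gives $\{b, c, d\}_{ij} = 1$, yielding a well-defined $g_\alpha$. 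The remaining checks that the assembled $G$ satisfies (St0)--(St8) and $G \circ F_\alpha = \id$ are routine, so the main obstacle is precisely the associator verification in the strong $n = 3$ case.
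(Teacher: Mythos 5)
Your overall architecture --- build a section \(G_\alpha\) generator by generator, with the only real work being the lift of \(X_{n-1,n}\) via commutators glued by Lemma \ref{ring-presentation} --- is the paper's architecture. For \(n \geq 4\) you deviate: the paper's written proof does \emph{not} reduce to the ultrashort case, but instead verifies the balancing hypothesis of Lemma \ref{ring-presentation} directly, using a fourth index \(\pm 2\) to show \(\widetilde X^i_{n-1,n}\bigl(a^{(\infty)}b^{(\infty)}c^{(\infty)}, d^{(\infty)}\bigr) = \widetilde X^j_{n-1,n}\bigl(a^{(\infty)}, b^{(\infty)}c^{(\infty)}d^{(\infty)}\bigr)\). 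Your commuting-square reduction is nevertheless legitimate (the paper itself remarks at the end of Section 7 that ``the short root case for \(n \geq 4\) follows from the ultrashort root case applied twice''), is not circular since Proposition \ref{ultrashort-presentation} does not use Proposition \ref{short-presentation}, and buys you a genuinely shorter argument at the cost of relying on the compatibility \(F_\alpha \circ F_{[\beta]} = F_\beta \circ F_{[\alpha]}\), which you should at least note needs checking on generators.

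For \(n = 3\) strong there is a concrete gap. Your lifting data consists only of the short-root commutators \(g_\alpha^{(l)}(b,c) = [X_{\infty, l}(b), X_{l,\infty}(c)]\) for \(l = \pm 1\), and you propose to verify (A1)--(A7) for the resulting associators ``by direct manipulation of the Steinberg relations.'' But axioms (A4)--(A7) all involve the involution, and they are not accessible from the relations among the \(X_{ij}\) alone: the paper verifies them through the \emph{additional} partial lifts \(\widetilde X^\pi_{n-1,n}(u,v) = [\widetilde X_{1-n}(\dotminus u), \widetilde X_n(v)]\) and \(\widetilde X^{1-n}_{n-1,n}(u,b)\), i.e.\ commutators involving the ultrashort root subgroups \(X_{\pm\infty}(\Delta^0)\), together with balancing identities such as \(\widetilde X^{1-n}_{n-1,n}\bigl(\phi(a^{(\infty)}b^{(\infty)}), c^{(\infty)}\bigr) = \widetilde X^i_{n-1,n}\bigl(a^{(\infty)}, b^{(\infty)}c^{(\infty)}\bigr)\, \widetilde X^{-i}_{n-1,n}\bigl(\inv{b^{(\infty)}}, \inv{a^{(\infty)}}c^{(\infty)}\bigr)^{-1}\) and \(\widetilde X^i_{n-1,n}\bigl(a^{(\infty)}\rho(u^{(\infty)}), c^{(\infty)}\bigr) = \widetilde X^{-i}_{n-1,n}\bigl(a^{(\infty)}, \rho(u^{(\infty)})c^{(\infty)}\bigr)\). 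These cross-identities between \(g_\alpha^{(1)}\) and \(g_\alpha^{(-1)}\) are exactly the content of (A4)--(A7), so your plan needs the \(\Delta\)-valued partial maps as part of the data, not as an afterthought. Relatedly, calling the remaining Steinberg relations ``routine'' undersells them: the two sub-lemmas establishing (St3)--(St5) and (St7) for the partial maps must come \emph{before} the gluing (they are used in the associator verification), and the case of (St8) pairing \(\widetilde X_{n-1}(u)\) with \(\widetilde X_{n-1,n}\) is a nontrivial computation of its own.
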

\begin{proof}
Without loss of generality, \(\alpha = \mathrm e_n - \mathrm e_{n - 1}\). We denote the generators of \(\stunit^{(\infty, S)}(R, \Delta; \Phi / \alpha)\) and \(\stunit^{(\infty, S)}(R, \Delta; \Phi)\) by \(X_{ij}\) and \(X_j\), in the first case \(i, j \in \{\pm 1, \ldots, \pm(n - 2), \pm \infty\}\) and in the second case \(i, j \in \{\pm 1, \ldots, \pm n\}\) (where \(\eta_\infty = \eta_{n - 1} \oplus \eta_n\)). We have to construct morphisms \(\widetilde X_{ij} \colon R^{(\infty, S)}_{i, j} \to \stunit^{(\infty, S)}(R, \Delta; \Phi / \alpha)\), \(\widetilde X_j \colon \Delta^{0, (\infty, S)}_j \to \stunit^{(\infty, S)}(R, \Delta; \Phi / \alpha)\) for \(i, j \in \{\pm 1, \ldots, \pm n\}\) and to show that they satisfy (St0)--(St8).
Let
\begin{itemize}
\item \(\widetilde X_{ij}\bigl(a^{(\infty)}\bigr) = X_{ij}\bigl(a^{(\infty)}\bigr)\) and \(\widetilde X_j\bigl(u^{(\infty)}\bigr) = X_j\bigl(u^{(\infty)}\bigr)\) for \(0 < |i|, |j| < n - 1\);
\item \(\widetilde X_{i, \pm j}\bigl(a^{(\infty)}\bigr) = \widetilde X_{\mp j, -i}\bigl(-\inv{a^{(\infty)}}\bigr) = X_{i, \pm\infty}\bigl(a^{(\infty)}\bigr)\) for \(0 < |i| < n - 1\), \(j \in \{n - 1, n\}\);
\item \(\widetilde X_{\pm j}\bigl(u^{(\infty)}\bigr) = X_{\pm \infty}\bigl(u^{(\infty)}\bigr)\) for \(j \in \{n - 1, n\}\);
\item \(\widetilde X_{\mp(n - 1), \pm n}\bigl(a^{(\infty)}\bigr) = \widetilde X_{\mp n, \pm(n - 1)}\bigl(-\inv{a^{(\infty)}}\bigr) = X_{\pm \infty}\bigl(\phi(a^{(\infty)})\bigr)\);
\item \(\widetilde X^i_{n - 1, n}\bigl(a^{(\infty)}, b^{(\infty)}\bigr) = \bigl[\widetilde X_{n - 1, i}\bigl(a^{(\infty)}\bigr), \widetilde X_{i n}\bigl(b^{(\infty)}\bigr)\bigr]\) for \(0 < |i| < n - 1\), \(a^{(\infty)} \in R^{(\infty, S)}_{n - 1, i}\), \(b^{(\infty)} \in R^{(\infty, S)}_{in}\);
\item \(\widetilde X^\pi_{n - 1, n}\bigl(u^{(\infty)}, v^{(\infty)}\bigr) = \bigl[\widetilde X_{1 - n}\bigl(\dotminus u^{(\infty)}\bigr), \widetilde X_n\bigl(v^{(\infty)}\bigr)\bigr]\) for \(u^{(\infty)} \in \Delta^{0, (\infty, S)}_{1 - n}\), \(v^{(\infty)} \in \Delta^{0, (\infty, S)}_n\);
\item \(\widetilde X^{1 - n}_{n - 1, n}\bigl(u^{(\infty)}, b^{(\infty)}\bigr) = \bigl[\widetilde X_{1 - n}\bigl(u^{(\infty)}\bigr), \widetilde X_{1 - n, n}\bigl(b^{(\infty)}\bigr)\bigr]\, \widetilde X_n\bigl(u^{(\infty)} \cdot (-b^{(\infty)})\bigr)\) for \(u^{(\infty)} \in \Delta^{0, (\infty, S)}_{1 - n}\), \(b^{(\infty)} \in R^{(\infty, S)}_{1 - n, n}\).
\end{itemize}
Now we have \(\widetilde X_\beta\) for all \(\beta \in \Phi \setminus \mathbb R \alpha\), and these morphisms satisfy the Steinberg relations not involving \(\pm \alpha\). Since no Steinberg relation involves \(\alpha\) and \(-\alpha\) simultaneously, it suffices to construct \(\widetilde X_{n - 1, n}\) and prove the Steinberg relations for it. The idea is to find a morphism \(\widetilde X_{n - 1, n}\) such that \(\widetilde X^i_{n - 1, n}\bigl(a^{(\infty)}, b^{(\infty)}\bigr) = \widetilde X_{n - 1, n}\bigl(a^{(\infty)}, b^{(\infty)}\bigr)\), \(\widetilde X^\pi_{n - 1, n}\bigl(u^{(\infty)}, v^{(\infty)}\bigr) = \widetilde X_{n - 1, n}\bigl(\inv{\pi(u^{(\infty)})} \pi(v^{(\infty)})\bigr)\), \(\widetilde X^{1 - n}_{n - 1, n}\bigl(u^{(\infty)}, b^{(\infty)}\bigr) = \widetilde X_{n - 1, n}\bigl(\rho(u^{(\infty)}) b^{(\infty)}\bigr)\) using lemma \ref{ring-presentation}. Note that there is an element \(\sigma\) of the Weyl group with the properties \(\sigma^2 = 1\) and \(\up \sigma \eta_{n - 1} = -\eta_n\), it stabilizes \(\alpha\).

\begin{lemma*}
The Steinberg relations (St3) and (St7) hold for \(\widetilde X^i_{n - 1, n}\), \(\widetilde X^\pi_{n - 1, n}\), \(\widetilde X^{1 - n}_{n - 1, n}\), and \(\widetilde X_\beta\) for \(\beta \in \Phi \setminus \mathbb R\alpha\).
\end{lemma*}
\begin{proof}
We begin with (St3) and (St7). Obviously, \(\widetilde X^i_{n - 1, n}\bigl(a^{(\infty)}, b^{(\infty)}\bigr)\) commutes with \(\widetilde X_{jk}\bigl(c^{(\infty)}\bigr)\) and \(\widetilde X_k\bigl(u^{(\infty)}\bigr)\) for \(0 < |i| < n - 1\) and \(j, -k \notin \{0, 1 - n, n\}\) if in addition \(j, k \neq \pm i\). Also, \(\widetilde X^{1 - n}_{n - 1, n}\bigl(u^{(\infty)}, b^{(\infty)}\bigr)\) and \(\widetilde X^\pi_{n - 1, n}\bigl(u^{(\infty)}, v^{(\infty)}\bigr)\) commute with \(\widetilde X_{ij}\bigl(c^{(\infty)}\bigr)\) for \(0 < |i|, |j| < n - 1\) and \(i \neq \pm j\).

For \(j \neq 0, \pm i, \pm (n - 1), \pm n\) it is easy to see that \(\widetilde X^i_{n - 1, n}\bigl(a^{(\infty)}, b^{(\infty)}\bigr)\) commutes with \(\widetilde X_{j, \pm i}(c^{(\infty)})\), \(\widetilde X_{\pm i}(u^{(\infty)})\), \(\widetilde X_{-i, n}(c^{(\infty)})\), and \(\widetilde X_{n - 1, -i}(c^{(\infty)})\) applying the group-theoretic identities and the known Steinberg relations. A similar computation shows that \(\widetilde X^\pi_{n - 1, n}\bigl(u^{(\infty)}, v^{(\infty)}\bigr)\) and \(\widetilde X^{1 - n}_{n - 1, n}\bigl(u^{(\infty)}, b^{(\infty)}\bigr)\) commute with \(\widetilde X_{n - 1, i}(c^{(\infty)})\), \(\widetilde X_{i n}(c^{(\infty)})\), and \(\widetilde X_i(w^{(\infty)})\) for \(0 < |i| < n - 1\).

Clearly, both \(\widetilde X^\pi\bigl(u^{(\infty)}, v^{(\infty)}\bigr)\) and \(\widetilde X^{1 - n}\bigl(u^{(\infty)}, b^{(\infty)}\bigr)\) commute with
\begin{align*}
\widetilde X_{-i, j}\bigl(\inv{\rho(w^{(\infty)})} d^{(\infty)}\bigr)\, \bigl[\widetilde X_i\bigl(\dotminus w^{(\infty)}\bigr), \widetilde X_{ij}\bigl(-d^{(\infty)}\bigr)\bigr] &= \widetilde X_j\bigl(w^{(\infty)} \cdot d^{(\infty)}\bigr),\\
\bigl[\widetilde X_{-j, i}\bigl(c^{(\infty)}\bigr), \widetilde X_{ij}\bigl(d^{(\infty)}\bigr)\bigr] &= \widetilde X_j\bigl(\phi(c^{(\infty)} d^{(\infty)})\bigr)
\end{align*}
for \(0 < |i| < n - 1\) and \(j \in \{1 - n, n\}\). Hence by lemma \ref{form-generation}, they commute with \(\widetilde X_n\bigl(w^{(\infty)}\bigr)\) and \(\widetilde X_{1 - n}\bigl(w^{(\infty)}\bigr)\).

To finish (St3), we need one simple case of (St5) for \(\widetilde X^i_{n - 1, n}\):
\begin{align*}
\bigl[\widetilde X_{-n, n - 1}(c^{(\infty)}), \widetilde X^i_{n - 1, n}\bigl(a^{(\infty)}, b^{(\infty)}\bigr)\bigr] &= \widetilde X_n\bigl(\phi(c^{(\infty)} a^{(\infty)} b^{(\infty)})\bigr),\\
\bigl[\widetilde X^i_{n - 1, n}\bigl(a^{(\infty)}, b^{(\infty)}\bigr), \widetilde X_{n, 1 - n}(c^{(\infty)})\bigr] &= \widetilde X_{1 - n}\bigl(\phi(a^{(\infty)} b^{(\infty)} c^{(\infty)})\bigr).
\end{align*}

If \(n \geq 4\), then \(\widetilde X^i_{n - 1, n}\bigl(a^{(\infty)}, b^{(\infty)}\bigr)\) commutes with
\begin{align*}
\bigl[\widetilde X_{i, \pm j}\bigl(c^{(\infty)}\bigr), \widetilde X_{\pm j, n}\bigl(d^{(\infty)}\bigr)\bigr] &= \widetilde X_{in}\bigl(c^{(\infty)} d^{(\infty)}\bigr),\\
\bigl[\widetilde X_{n - 1, \pm j}\bigl(c^{(\infty)}\bigr), \widetilde X_{\pm j, i}\bigl(d^{(\infty)}\bigr)\bigr] &= \widetilde X_{n - 1, i}\bigl(c^{(\infty)} d^{(\infty)}\bigr)
\end{align*}
for \(j \neq 0, \pm i, \pm (n - 1), \pm n\). If \(n = 3\) and the orthogonal hyperbolic family is strong, then it commutes with
\begin{align*}
\bigl[\widetilde X_{i, 1 - n}\bigl(c^{(\infty)}\bigr), \widetilde X_{1 - n, n}\bigl(d^{(\infty)}\bigr)\bigr] &= \widetilde X_{in}\bigl(c^{(\infty)} d^{(\infty)}\bigr),\\
\bigl[\widetilde X_{n - 1, -n}\bigl(c^{(\infty)}\bigr), \widetilde X_{-n, i}\bigl(d^{(\infty)}\bigr)\bigr] &= \widetilde X_{n - 1, i}\bigl(c^{(\infty)} d^{(\infty)}\bigr)
\end{align*}
(here we need the proved instance of (St5)). Hence in any case \(\widetilde X^i_{n - 1, n}\bigl(a^{(\infty)}, b^{(\infty)}\bigr)\) commutes with \(\widetilde X_{in}\bigl(c^{(\infty)}\bigr)\) and \(\widetilde X_{n - 1, i}\bigl(c^{(\infty)}\bigr)\) by lemma \ref{ring-generation}. It follows that \(\widetilde X^i_{n - 1, n}\), \(\widetilde X^\pi_{n - 1, n}\), \(\widetilde X^{1 - n}_{n - 1, n}\) commute with each other.
\end{proof}

\begin{lemma*}
The Steinberg relations (St4) and (St5) hold for \(\widetilde X^i_{n - 1, n}\), \(\widetilde X^\pi_{n - 1, n}\), \(\widetilde X^{1 - n}_{n - 1, n}\), and \(\widetilde X_\beta\) for \(\beta \in \Phi \setminus \mathbb R\alpha\).
\end{lemma*}
\begin{proof}
Recall that (St5) is proved for \(\widetilde X^i_{n - 1, n}\) in the previous lemma. Direct calculations show that
\begin{align*}
\bigl[\widetilde X^i_{n - 1, n}\bigl(a^{(\infty)}, b^{(\infty)}\bigr), \widetilde X_{nj}(c^{(\infty)})\bigr] &= \widetilde X_{n - 1, j}\bigl(a^{(\infty)} b^{(\infty)} c^{(\infty)}\bigr),\\
\bigl[\widetilde X_{j, n - 1}(c^{(\infty)}), \widetilde X^i_{n - 1, n}\bigl(a^{(\infty)}, b^{(\infty)}\bigr)\bigr] &= \widetilde X_{jn}\bigl(c^{(\infty)} a^{(\infty)} b^{(\infty)}\bigr)
\end{align*}
for \(j \notin \{0, i, 1 - n, n - 1, -n, n\}\). It follows that
\begin{align*}
\bigl[\widetilde X^i_{n - 1, n}\bigl(a^{(\infty)}, b^{(\infty)}\bigr), \bigl[\widetilde X_{nj}\bigl(c^{(\infty)}\bigr), \widetilde X_{ji}\bigl(d^{(\infty)}\bigr)\bigr]\bigr] &= \widetilde X_{n - 1, i}\bigl(a^{(\infty)} b^{(\infty)} c^{(\infty)} d^{(\infty)}\bigr),\\
\bigl[\bigl[\widetilde X_{ik}\bigl(c^{(\infty)}\bigr), \widetilde X_{k, n - 1}\bigl(d^{(\infty)}\bigr)\bigr], \widetilde X^i_{n - 1, n}\bigl(a^{(\infty)}, b^{(\infty)}\bigr)\bigr] &= \widetilde X_{in}\bigl(c^{(\infty)} d^{(\infty)} a^{(\infty)} b^{(\infty)}\bigr)
\end{align*}
for \(j \notin \{0, \pm i, n - 1, \pm n\}\) and \(k \notin \{0, \pm i, \pm (n - 1), n\}\). Hence we have (St4) for \(\widetilde X^i_{n - 1, n}\) by lemma \ref{ring-generation}.

For \(\widetilde X^\pi_{n - 1, n}\) and \(\widetilde X^{1 - n}_{n - 1, n}\) the proofs of (St4) and (St5) are almost the same. In the case of \(\widetilde X^{1 - n}_{n - 1, n}\) they go as follows. For \(0 < |i| < n - 1\) we have (St4)
\begin{align*}
\bigl[\widetilde X^{1 - n}_{n - 1, n}\bigl(u^{(\infty)}, b^{(\infty)}\bigr), \widetilde X_{ni}(c^{(\infty)})\bigr] &= \widetilde X_{n - 1, i}\bigl(\rho(u^{(\infty)}) b^{(\infty)} c^{(\infty)}\bigr),\\
\bigl[\widetilde X_{i, n - 1}(c^{(\infty)}), \widetilde X^{1 - n}_{n - 1, n}\bigl(u^{(\infty)}, b^{(\infty)}\bigr)\bigr] &= \widetilde X_{in}\bigl(c^{(\infty)} \rho(u^{(\infty)}) b^{(\infty)}\bigr)
\end{align*}
applying the group-theoretic identities and the previous lemma.

Using the proved relations, it is easy to show that
\begin{align*}
\bigl[\widetilde X^{1 - n}_{n - 1, n}\bigl(u^{(\infty)}, b^{(\infty)}\bigr), \bigl[\widetilde X_{ni}
\bigl(c^{(\infty)}\bigr), \widetilde X_{i, 1 - n}\bigl(d^{(\infty)}\bigr)\bigr]\bigr] &= \widetilde X_{1 - n}\bigl(\phi(\rho(u^{(\infty)}) b^{(\infty)} c^{(\infty)} d^{(\infty)})\bigr),\\
\bigl[\bigl[\widetilde X_{-n, i}\bigl(c^{(\infty)}\bigr), \widetilde X_{i, n - 1}\bigl(d^{(\infty)}\bigr)\bigr], \widetilde X^{1 - n}_{n - 1, n}\bigl(u^{(\infty)}, b^{(\infty)}\bigr)\bigr] &= \widetilde X_n\bigl(\phi(c^{(\infty)} d^{(\infty)} \rho(u^{(\infty)}) b^{(\infty)})\bigr).
\end{align*}
for \(0 < |i| < n - 1\). Hence we have (St5) by lemma \ref{ring-generation}.
\end{proof}

\begin{lemma*}
There is a unique morphism \(\widetilde X_{n - 1, n} \colon R_{n - 1, n}^{(\infty, S)} \to \stunit^{(\infty, S)}(R, \Delta; \Phi / \alpha)\) of pro-groups restricting to \(\widetilde X^i_{n - 1, n}\), \(\widetilde X^\pi_{n - 1, n}\), and \(\widetilde X^{1 - n}_{n - 1, n}\).
\end{lemma*}
\begin{proof}
Using the known Steinberg relations, it follows that \(\widetilde X^i_{n - 1, n}\) and \(\widetilde X^\pi_{n - 1, n}\) are biadditive. For \(\widetilde X^{1 - n}_{n - 1, n}\) a similar property is
\begin{align*}
\widetilde X^{1 - n}_{n - 1, n}\bigl(u^{(\infty)}, b^{(\infty)} + {b'}^{(\infty)}\bigr) &= \widetilde X^{1 - n}_{n - 1, n}\bigl(u^{(\infty)}, b^{(\infty)}\bigr)\, \widetilde X^{1 - n}_{n - 1, n}\bigl(u^{(\infty)}, {b'}^{(\infty)}\bigr),\\
\widetilde X^{1 - n}_{n - 1, n}\bigl(u^{(\infty)} \dotplus {u'}^{(\infty)}, b^{(\infty)}\bigr) &= \widetilde X^{1 - n}_{n - 1, n}\bigl({u'}^{(\infty)}, b^{(\infty)}\bigr)\, \widetilde X^\pi_{n - 1, n}\bigl(\dotminus u^{(\infty)}, {u'}^{(\infty)} \cdot b^{(\infty)}\bigr)\, \widetilde X^{1 - n}_{n - 1, n}\bigl(u^{(\infty)}, b^{(\infty)}\bigr).
\end{align*}
Biadditivity implies that
\[\bigl[\widetilde X_{-n, i}\bigl(a^{(\infty)}\bigr), \widetilde X_{i, 1 - n}\bigl(b^{(\infty)}\bigr)\bigr] = \widetilde X^{-i}_{n - 1, n}\bigl(-\inv{b^{(\infty)}}, \inv{a^{(\infty)}}\bigr),\]
so we may apply the element \(\sigma\) from the Weyl group to every identity involving \(X^i_{n - 1, n}\) and \(X^\pi_{n - 1, n}\).

Now we show that the morphisms \(\widetilde X^i_{n - 1, n}\), \(\widetilde X^\pi_{n - 1, n}\), \(\widetilde X^{1 - n}_{n - 1, n}\) are balanced. It is easy to see that
\[\widetilde X^i_{n - 1, n}\bigl(a^{(\infty)}, b^{(\infty)} c^{(\infty)}\bigr) = \widetilde X^j_{n - 1, n}\bigl(a^{(\infty)} b^{(\infty)}, c^{(\infty)}\bigr)\]
for \(0 < |i|, |j| < n - 1\), \(i \neq \pm j\), \(a^{(\infty)} \in R^{(\infty, S)}_{n - 1, i}\), \(b^{(\infty)} \in R^{(\infty, S)}_{ij}\), \(c^{(\infty)} \in R^{(\infty, S)}_{jn}\);
\[\widetilde X^{1 - n}_{n - 1, n}\bigl(u^{(\infty)}, b^{(\infty)} c^{(\infty)}\bigr) = \widetilde X^i_{n - 1, n}\bigl(\rho(u^{(\infty)}) b^{(\infty)}, c^{(\infty)}\bigr)\]
for \(0 < |i| < n - 1\), \(u^{(\infty)} \in \Delta^{0, (\infty, S)}_{1 - n}\), \(b^{(\infty)} \in R^{(\infty, S)}_{1 - n, i}\), \(c^{(\infty)} \in R^{(\infty, S)}_{in}\);
\[\widetilde X^i_{n - 1, n}\bigl(a^{(\infty)} \rho(u^{(\infty)}), c^{(\infty)}\bigr) = \widetilde X^{-i}_{n - 1, n}\bigl(a^{(\infty)}, \rho(u^{(\infty)}) c^{(\infty)}\bigr)\]
for \(a^{(\infty)} \in R^{(\infty, S)}_{n - 1, -i}\), \(u^{(\infty)} \in \Delta^{0, (\infty, S)}_i\), \(c^{(\infty)} \in R^{(\infty, S)}_{in}\);
\[\widetilde X^\pi_{n - 1, n}\bigl(u^{(\infty)}, v^{(\infty)} \cdot b^{(\infty)}\bigr) = \widetilde X^i_{n - 1, n}\bigl(\inv{\pi(u^{(\infty)})} \pi(v^{(\infty)}), b^{(\infty)}\bigr)\]
for \(0 < |i| < n - 1\), \(u^{(\infty)} \in \Delta^{0, (\infty, S)}_{1 - n}\), \(v^{(\infty)} \in \Delta^{0, (\infty, S)}_i\), \(b \in R^{(\infty, S)}_{in}\);
\[\widetilde X^\pi_{n - 1, n}\bigl(u^{(\infty)}, \phi(a^{(\infty)} b^{(\infty)})\bigr) = 1\]
for \(0 < |i| < n - 1\), \(u^{(\infty)} \in \Delta^{0, (\infty, S)}_{1 - n}\), \(a^{(\infty)} \in R^{(\infty, S)}_{-n, i}\), \(b^{(\infty)} \in R^{(\infty, S)}_{in}\).

Hence by lemma \ref{ring-generation} it follows that \(\widetilde X^\pi_{n - 1, n}\bigl(u^{(\infty)}, \phi(a^{(\infty)})\bigr) = 1\).
Using that \(\widetilde X^\pi_{n - 1, n}\) is balanced (and the symmetry \(\sigma\)), it is easy to prove that
\[\widetilde X^{1 - n}_{n - 1, n}\bigl(u^{(\infty)} \cdot a^{(\infty)}, b^{(\infty)}\bigr) = \widetilde X^i_{n - 1, n}\bigl(\inv{a^{(\infty)}} \rho(u^{(\infty)}), a^{(\infty)} b^{(\infty)}\bigr)\]
for \(0 < |i| < n - 1\), \(u^{(\infty)} \in \Delta^{0, (\infty, S)}_i\), \(a^{(\infty)} \in R^{(\infty, S)}_{i, 1 - n}\), \(b^{(\infty)} \in R^{(\infty, S)}_{1 - n, n}\). Finally,
\[\widetilde X^{1 - n}_{n - 1, n}\bigl(\phi(a^{(\infty)} b^{(\infty)}), c^{(\infty)}\bigr) = \widetilde X^i_{n - 1, n}\bigl(a^{(\infty)}, b^{(\infty)} c^{(\infty)}\bigr)\, \widetilde X^{-i}_{n - 1, n}\bigl(\inv{b^{(\infty)}}, \inv{a^{(\infty)}} c^{(\infty)}\bigr)^{-1}\]
for \(0 < |i| < n - 1\), \(a^{(\infty)} \in R^{(\infty, S)}_{n - 1, i}\), \(b^{(\infty)} \in R^{(\infty, S)}_{i, 1 - n}\), \(c^{(\infty)} \in R^{(\infty, S)}_{1 - n, n}\).

If \(n \geq 4\), then
\[\widetilde X^i_{n - 1, n}\bigl(a^{(\infty)} b^{(\infty)} c^{(\infty)}, d^{(\infty)}\bigr) = \widetilde X^j_{n - 1, n}\bigl(a^{(\infty)}, b^{(\infty)} c^{(\infty)} d^{(\infty)}\bigr)\]
for \(i, j \in \{-1, 1\}\), \(a^{(\infty)} \in R^{(\infty, S)}_{n - 1, j}\), \(b^{(\infty)} \in R^{(\infty, S)}_{j, \pm 2}\), \(c^{(\infty)} \in R^{(\infty, S)}_{\pm 2, i}\), \(d^{(\infty)} \in R^{(\infty, S)}_{in}\). Hence by lemma \ref{ring-presentation} there is a unique morphism \(\widetilde X_{n - 1, n} \colon R^{(\infty, S)}_{n - 1, n} \to \stunit^{(\infty, S)}(R, \Delta; \Phi / \alpha)\) of pro-groups restricting to \(\widetilde X_{n - 1, n}^{\pm 1}\).

Now suppose that \(n = 3\) and the orthogonal hyperbolic family is strong. Let
\[\{a^{(\infty)}, b^{(\infty)}, c^{(\infty)}\}_{ij} = \widetilde X^j_{n - 1, n}\bigl(a^{(\infty)} b^{(\infty)}, c^{(\infty)}\bigr)\, \widetilde X^i_{n - 1, n}\bigl(a^{(\infty)}, b^{(\infty)} c^{(\infty)}\bigr)^{-1}\]
for \(i, j \in \{-1, 1\}\), \(a^{(\infty)} \in R^{(\infty, S)}_{n - 1, i}\), \(b^{(\infty)} \in R^{(\infty, S)}_{ij}\), \(c^{(\infty)} \in R^{(\infty, S)}_{jn}\). This morphisms satisfy all the axioms from lemma \ref{associator}: the first three are trivial, and the rest follows from the balancing properties of \(\widetilde X^i_{n - 1, n}\bigl(a^{(\infty)} \rho(\phi(b^{(\infty)})), c^{(\infty)}\bigr)\), \(\widetilde X^{1 - n}_{n - 1, n}\bigl(\phi(a^{(\infty)}) \cdot b^{(\infty)}, c^{(\infty)}\bigr)\), \(\widetilde X^{1 - n}_{n - 1, n}(\phi(a^{(\infty)} b^{(\infty)}), c^{(\infty)} d^{(\infty)})\), and \(\widetilde X^{1 - n}_{n - 1, n}\bigl(\phi(a^{(\infty)} b^{(\infty)} c^{(\infty)}), d^{(\infty)}\bigr)\). Hence \(\{a^{(\infty)}, b^{(\infty)}, c^{(\infty)}\}_{ij} = 1\) and by lemma \ref{ring-presentation} there is a unique morphism \(\widetilde X_{n - 1, n} \colon R^{(\infty, S)}_{n - 1, n} \to \stunit^{(\infty, S)}(R, \Delta; \Phi / \alpha)\) of pro-groups restricting to \(\widetilde X_{n - 1, n}^{\pm 1}\).

In any case, lemmas \ref{ring-generation} and \ref{form-generation} together with the balancing properties imply that \(\widetilde X_{n - 1, n}\) restricts to all \(\widetilde X^i_{n - 1, n}\), \(\widetilde X^\pi_{n - 1, n}\), and \(\widetilde X^{1 - n}_{n - 1, n}\).
\end{proof}

Hence \(\widetilde X_{n - 1, n}\) exists and satisfies (St0)--(St7) by lemma \ref{ring-generation}. Also (St8) holds by definition if \(\alpha\) appears in the right hand side (there is another such case of (St8) obtained by using the symmetry \(\sigma\)). The remaining case of (St8) follows from a direct calculation
\begin{align*}
\bigl[\widetilde X_{n - 1}(u^{(\infty)}), \widetilde X^i_{n - 1, n}\bigl(a^{(\infty)}, b^{(\infty)}\bigr)\bigr] &= \widetilde X_{1 - n, n}\bigl(\rho(u^{(\infty)}) a^{(\infty)} b^{(\infty)}\bigr)\, \widetilde X_n\bigl(\dotminus u^{(\infty)} \cdot (-a^{(\infty)} b^{(\infty)})\bigr)
\end{align*}
and lemma \ref{ring-generation}. By lemma \ref{steinberg-presentation}, there is a morphism \(G_\alpha \colon \stunit^{(\infty, S)}(R, \Delta; \Phi) \to \stunit^{(\infty, S)}(R, \Delta; \Phi / \alpha)\) of pro-groups such that \(G_\alpha \circ X_{ij} = \widetilde X_{ij}\) and \(G_\alpha \circ X_j = \widetilde X_j\). Clearly, \(G_\alpha \circ F_\alpha = \id\). Hence by lemma \ref{root-generation} also \(F_\alpha \circ G_\alpha = \id\).
\end{proof}

\section{Elimination of roots: the ultrashort root case}

\begin{prop}\label{ultrashort-presentation}
Suppose that \(n \geq 3\). Then for every ultrashort root \(\alpha\) the morphism \(F_\alpha \colon \stunit^{(\infty, S)}(R, \Delta; \Phi / \alpha) \to \stunit^{(\infty, S)}(R, \Delta; \Phi)\) is an isomorphism of pro-groups.
\end{prop}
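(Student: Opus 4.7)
The plan is to follow the blueprint of proposition~\ref{short-presentation}, which should actually be simpler in the ultrashort case. Take $\alpha = \mathrm e_1$, so $\stunit^{(\infty, S)}(R, \Delta; \Phi/\alpha)$ is built from the orthogonal hyperbolic family $\eta_2, \ldots, \eta_n$. I will construct an inverse $G_\alpha$ of $F_\alpha$ by defining candidate pro-group generators $\widetilde X_\beta$ in the small group for each root $\beta \in \Phi$, verifying the Steinberg relations (St0)--(St8), and applying lemma~\ref{steinberg-presentation}. Surjectivity of $F_\alpha$ from lemma~\ref{root-generation} will then close the argument.

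For roots $\beta$ whose indices avoid $\pm 1$, set $\widetilde X_\beta := X_\beta$. For the mixed short roots involving index $\pm 1$, the element $q_{\pm 1} \cdot a$ lies in $\Delta^0_i$ with respect to the \emph{small} family (since $\pi(q_{\pm 1} \cdot a) = e_{\pm 1} a$ has no $e_{|j|}$ component for $j \geq 2$), so define $\widetilde X_{\pm 1, i}(a) := X_i(q_{\pm 1} \cdot a)$ and $\widetilde X_{i, \pm 1}(a) := \widetilde X_{\mp 1, -i}(-\inv a)$ in accordance with (St0); a direct check using the definition of $F_\alpha$ shows that $F_\alpha \circ \widetilde X_{\pm 1, i} = X_{\pm 1, i}$.

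The essential step is to construct $\widetilde X_1 \colon \Delta^{0, (\infty, S)}_1 \to \stunit^{(\infty, S)}(R, \Delta; \Phi/\alpha)$ (and $\widetilde X_{-1}$ by the Weyl reflection exchanging $1$ and $-1$). Invoke lemma~\ref{form-presentation} with $i = 1$ and $j = 2$. Guided by the commutator identities used in the surjectivity argument of lemma~\ref{root-generation}, define the partial morphisms
\[f_{\pm 2}(u^{(\infty)}, b^{(\infty)}) := \widetilde X_{\mp 2, 1}\bigl(-\inv{\rho(u^{(\infty)})}\, b^{(\infty)}\bigr)\, \bigl[\widetilde X_{\pm 2}(\dotminus u^{(\infty)}), \widetilde X_{\pm 2, 1}(-b^{(\infty)})\bigr]\]
on $\Delta^{0, (\infty, S)}_{\pm 2} \times R^{(\infty, S)}_{\pm 2, 1}$, and construct $g \colon R^{(\infty, S)}_{-1, 1} \to \stunit^{(\infty, S)}(R, \Delta; \Phi/\alpha)$ by applying lemma~\ref{ring-presentation} to the bilinear morphisms $(c^{(\infty)}, d^{(\infty)}) \mapsto [\widetilde X_{-1, \pm 2}(c^{(\infty)}), \widetilde X_{\pm 2, 1}(d^{(\infty)})]$. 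Verifying the hypotheses of lemmas~\ref{ring-presentation} and~\ref{form-presentation} reduces to routine commutator manipulations in the small Steinberg group using the group-theoretic identities $[xy, z] = \up x{[y, z]}\,[x, z]$, $[x, yz] = [x, y]\,\up y{[x, z]}$, and the relations (St1)--(St8) already available there.

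Once $\widetilde X_{\pm 1}$ are in hand, check (St0)--(St8) for the full family $\{\widetilde X_\beta\}_{\beta \in \Phi}$. Relations not coupling $\widetilde X_{\pm 1}$ with anything else are immediate; the remaining ones, in particular the instances of (St5), (St6), (St8) involving $\widetilde X_{\pm 1}$, reduce after invoking lemmas~\ref{ring-generation} and~\ref{form-generation} to identities that already hold in $\stunit^{(\infty, S)}(R, \Delta; \Phi)$. Lemma~\ref{steinberg-presentation} then yields a pro-group morphism $G_\alpha$ with $G_\alpha \circ F_\alpha = \id$ on generators (hence everywhere), and $F_\alpha \circ G_\alpha = \id$ follows from the fact that $F_\alpha$ is an epimorphism by lemma~\ref{root-generation}. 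The main obstacle is the bookkeeping for the balancing axioms of lemma~\ref{form-presentation} (showing in particular that $f_{+2}$ and $f_{-2}$ agree on the overlap) and the (St5), (St8) checks for $\widetilde X_{\pm 1}$; however, unlike the short root case, no analogue of lemma~\ref{associator} is needed, because the ultrashort root $\mathrm e_1$ corresponds to a single idempotent $e_1$, so the trilinear obstruction that forced the associator argument in proposition~\ref{short-presentation} does not arise.
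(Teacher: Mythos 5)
Your proposal follows essentially the same route as the paper's proof: identify $\widetilde X_{\pm 1, i}(a)$ with $X_i(q_{\pm 1}\cdot a)$ in the small Steinberg group, build $\widetilde X_{-1,1}$ from the commutators $[\widetilde X_{-1,\pm 2}(c), \widetilde X_{\pm 2,1}(d)]$ via lemma~\ref{ring-presentation}, assemble $\widetilde X_1$ from the partial morphisms $f_{\pm 2}$ via lemma~\ref{form-presentation}, verify (St0)--(St8), and close with lemmas~\ref{steinberg-presentation} and~\ref{root-generation}; your observation that no analogue of lemma~\ref{associator} is needed (because a spare index $i \neq 0, \pm 1, \pm 2$ is always available when $n \geq 3$) also matches the paper. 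The only discrepancy is the sign in front of $\inv{\rho(u^{(\infty)})}\,b^{(\infty)}$ in the definition of $f_{\pm 2}$, on which the paper itself is not internally consistent; this does not affect the argument.
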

\begin{proof}
Without loss of generality, \(\alpha = \mathrm e_1\). We denote the generators of \(\stunit^{(\infty, S)}(R, \Delta; \Phi / \alpha)\) and \(\stunit^{(\infty, S)}(R, \Delta; \Phi)\) by \(X_{ij}\) and \(X_j\), in the first case \(i, j \in \{\pm 2, \ldots, \pm n\}\) and in the second case \(i, j \in \{\pm 1, \ldots, \pm n\}\). We have to construct morphisms \(\widetilde X_{ij} \colon R^{(\infty, S)}_{i, j} \to \stunit^{(\infty, S)}(R, \Delta; \Phi / \alpha)\), \(\widetilde X_j \colon \Delta^{0, (\infty, S)}_j \to \stunit^{(\infty, S)}(R, \Delta; \Phi / \alpha)\) for \(i, j \in \{\pm 1, \ldots, \pm n\}\) and to show that they satisfy (St0)--(St8).
Let
\begin{itemize}
\item \(\widetilde X_{ij}\bigl(a^{(\infty)}\bigr) = X_{ij}\bigl(a^{(\infty)}\bigr)\) and \(\widetilde X_j\bigl(u^{(\infty)}\bigr) = X_j\bigl(u^{(\infty)}\bigr)\) for \(i, j \neq 0, \pm 1\);
\item \(\widetilde X_{\pm 1, j}\bigl(a^{(\infty)}\bigr) = \widetilde X_{-j, \mp 1}\bigl(-\inv{a^{(\infty)}}\bigr) = X_j\bigl(q_{\pm 1} \cdot a^{(\infty)}\bigr)\) for \(j \neq 0, \pm 1\);
\item \(\widetilde X^i_{-1, 1}\bigl(a^{(\infty)}, b^{(\infty)}\bigr) = \bigl[\widetilde X_{-1, i}
\bigl(a^{(\infty)}\bigr), \widetilde X_{i1}\bigl(b^{(\infty)}\bigr)\bigr]\) for \(i \neq 0, \pm 1\), \(a^{(\infty)} \in R^{(\infty, S)}_{-1, i}\), \(b^{(\infty)} \in R^{(\infty, S)}_{i1}\);
\item \(\widetilde X^i_1\bigl(u^{(\infty)}, b^{(\infty)}\bigr) = \widetilde X_{-i, 1}\bigl(\inv{\rho(u^{(\infty)})} b^{(\infty)}\bigr)\, \bigl[\widetilde X_i\bigl(\dotminus u^{(\infty)}\bigr), \widetilde X_{i1}\bigl(-b^{(\infty)}\bigr)\bigr]\) for \(i \neq 0, \pm 1\), \(u^{(\infty)} \in \widetilde \Delta^{0, (\infty, S)}_i\), \(b^{(\infty)} \in \widetilde R^{(\infty, S)}_{i1}\).
\end{itemize}
Now we have \(\widetilde X_\beta\) for all \(\beta \in \Phi \setminus \mathbb R \alpha\), and these morphisms satisfy the Steinberg relations not involving \(\mathbb R \alpha\). Since no Steinberg relation involves \(\alpha\) and \(-\alpha\) simultaneously, it suffices to construct \(\widetilde X_1\) and prove the Steinberg relations for it. The idea is to find a morphism \(\widetilde X_1\) such that \(\widetilde X^i_{-1, 1}\bigl(a^{(\infty)}, b^{(\infty)}\bigr) = \widetilde X_1\bigl(\phi(a^{(\infty)} b^{(\infty)})\bigr)\) and \(\widetilde X^i_1\bigl(u^{(\infty)}, b^{(\infty)}\bigr) = \widetilde X_1\bigl(u^{(\infty)} \cdot b^{(\infty)}\bigr)\) using lemma \ref{form-presentation}.

\begin{lemma*}
Morphisms \(\widetilde X^i_{-1, 1}\), \(\widetilde X^i_1\), and \(\widetilde X_\beta\) for \(\beta \in \Phi \setminus \mathbb R\alpha\) satisfy (St6) and (St7).
\end{lemma*}
\begin{proof}
We begin with (St7). Clearly, \(\widetilde X_{-1, 1}^i\bigl(a^{(\infty)}, b^{(\infty)}\bigr)\) and \(\widetilde X_1^i\bigl(u^{(\infty)}, b^{(\infty)}\bigr)\) commute with \(\widetilde X_{jk}\bigl(c^{(\infty)}\bigr)\) if \(j\) and \(k\) are different from \(0, \pm i\) and \(j \neq 1 \neq -k\). It is easy to check that they commute with \(\widetilde X_{\pm i, k}(c^{(\infty)})\) by direct calculations for \(k \neq 0, \pm 1, \pm i\). Since they commute with
\[\bigl[\widetilde X_{\pm i, j}\bigl(c^{(\infty)}\bigr), \widetilde X_{j1}\bigl(d^{(\infty)}\bigr)\bigr] = \widetilde X_{\pm i, 1}\bigl(c^{(\infty)} d^{(\infty)}\bigr)\]
for \(j \neq 0, \pm 1, \pm i\), they also commute with \(\widetilde X_{\pm i, 1}\bigl(c^{(\infty)}\bigr)\) by lemma \ref{ring-generation}. By (St0), they also commute with \(\widetilde X_{j, \pm i}\bigl(c^{(\infty)}\bigr)\) for \(j \notin \{0, 1, -i, i\}\).

Now we prove (St6). If \(j \neq 0, \pm 1, \pm i\), then clearly
\begin{align*}
\bigl[\widetilde X_j(v^{(\infty)}), \widetilde X^i_{-1, 1}\bigl(a^{(\infty)}, b^{(\infty)}\bigr)\bigr] &= 1,\\
\bigl[\widetilde X_j(v^{(\infty)}), \widetilde X^i_1\bigl(u^{(\infty)}, b^{(\infty)}\bigr)\bigr] &= \widetilde X_{-j, 1}\bigl(-\inv{\pi(v^{(\infty)})} \pi(u^{(\infty)}) b^{(\infty)}\bigr).
\end{align*}
Since \(\bigl[\widetilde X_{\mp i, j}\bigl(c^{(\infty)}\bigr), \widetilde X_{j, \pm i}\bigl(d^{(\infty)}\bigr)\bigr] = \widetilde X_{\pm i}\bigl(\phi(c^{(\infty)} d^{(\infty)})\bigr)\) commutes with \(\widetilde X^i_{-1, 1}\bigl(a^{(\infty)}, b^{(\infty)}\bigr)\) and \(\widetilde X^i_1\bigl(u^{(\infty)}, b^{(\infty)}\bigr)\) for \(j \neq 0, \pm 1, \pm i\), it follows by lemma \ref{ring-generation} that \(\widetilde X_{\pm i}\bigl(\phi(c^{(\infty)})\bigr)\) commutes with these morphisms. Since
\[\bigl[\widetilde X_j\bigl(\dotminus v^{(\infty)}\bigr), \widetilde X_{j, \pm i}\bigl(-d^{(\infty)}\bigr)\bigr] = \widetilde X_{-j, \pm i}\bigl(-\inv{\rho(v^{(\infty)})} d^{(\infty)}\bigr)\, \widetilde X_{\pm i}\bigl(v^{(\infty)} \cdot d^{(\infty)}\bigr),\]
we have
\begin{align*}
\bigl[\widetilde X^i_{-1, 1}(a^{(\infty)}, b^{(\infty)}), \widetilde X_{\pm i}\bigl(v^{(\infty)} \cdot d^{(\infty)}\bigr)\bigr] &= \widetilde X_{-1, \pm i}\bigl(-\inv{b^{(\infty)}} \inv{\pi(u^{(\infty)})} \pi(v^{(\infty)}) d^{(\infty)}\bigr)\\
\bigl[\widetilde X^i_1(u^{(\infty)}, b^{(\infty)}), \widetilde X_{\pm i}\bigl(v^{(\infty)} \cdot d^{(\infty)}\bigr)\bigr] &= \widetilde X_{-1, \pm i}\bigl(-\inv{b^{(\infty)}} \inv{\pi(u^{(\infty)})} \pi(v^{(\infty)}) d^{(\infty)}\bigr).
\end{align*}
for \(j \neq 0, \pm 1, \pm i\), \(v^{(\infty)} \in \Delta^{0, (\infty, S)}_j\), \(d^{(\infty)} \in R^{(\infty, S)}_{j, \pm i}\). Lemma \ref{form-generation} now implies that \begin{align*}
\bigl[\widetilde X^i_{-1, 1}\bigl(a^{(\infty)}, b^{(\infty)}\bigr), \widetilde X_{\pm i}\bigl(v^{(\infty)}\bigr)\bigr] &= 1,\\
\bigl[\widetilde X^i_1\bigl(u^{(\infty)}, b^{(\infty)}\bigr), X_{\pm i}\bigl(v^{(\infty)}\bigr)\bigr] &= \widetilde X_{-1, \pm i}\bigl(-\inv{b^{(\infty)}} \inv{\pi(u^{(\infty)})} \pi(v^{(\infty)})\bigr).\qedhere
\end{align*}
\end{proof}

\begin{lemma*}
There exists a unique morphism \(\widetilde X_{-1, 1} \colon R^{(\infty, S)}_{-1, 1} \to \stunit^{(\infty, S)}(R, \Delta; \Phi / \alpha)\) of pro-groups such that \(\widetilde X^i_{-1, 1}\bigl(a^{(\infty)}, b^{(\infty)}\bigr) = \widetilde X_{-1, 1}\bigl(a^{(\infty)} b^{(\infty)}\bigr)\). It commutes with \(\widetilde X^j_1\) and satisfies \(\widetilde X_{-1, 1}\bigl(a^{(\infty)}\bigr) = \widetilde X_{-1, 1}\bigl(-\inv{a^{(\infty)}}\bigr)\).
\end{lemma*}
\begin{proof}
Note that \(\widetilde X^i_{-1, 1}\bigl(a^{(\infty)}, b^{(\infty)}\bigr)\) commutes with \(\widetilde X^j_{-1, 1}\bigl(c^{(\infty)}, d^{(\infty)}\bigr)\) and \(\widetilde X^j_1\bigl(u^{(\infty)}, d^{(\infty)}\bigr)\) for all possible \(i, j\). By the previous lemma,
\begin{align*}
\widetilde X^i_{-1, 1}\bigl(a^{(\infty)}, b^{(\infty)} + {b'}^{(\infty)}\bigr) &= \widetilde X^i_{-1, 1}\bigl(a^{(\infty)}, b^{(\infty)}\bigr)\, \widetilde X_{i1}\bigl(a^{(\infty)}, {b'}^{(\infty)}\bigr),\\
\widetilde X^i_{-1, 1}\bigl(a^{(\infty)}, b^{(\infty)}\bigr) &= \widetilde X^{-i}_{-1, 1}\bigl(-\inv{b^{(\infty)}}, \inv{a^{(\infty)}}\bigr).
\end{align*}
If \(\pm 1, \pm i, \pm j\) are all distinct and non-zero, the it is easy to see that
\[\widetilde X^i_{-1, 1}\bigl(a^{(\infty)}, b^{(\infty)} c^{(\infty)}\bigr) = \widetilde X^j_{-1, 1}\bigl(a^{(\infty)} b^{(\infty)}, c^{(\infty)}\bigr)\]
for \(a^{(\infty)} \in R^{(\infty, S)}_{-1, i}\), \(b^{(\infty)} \in R^{(\infty, S)}_{ij}\), \(c^{(\infty)} \in R^{(\infty, S)}_{j1}\). Let \(i \neq 0, \pm 1, \pm 2\). Since
\[\widetilde X^l_{-1, 1}\bigl(a^{(\infty)}, b^{(\infty)} c^{(\infty)} d^{(\infty)}\bigr) = \widetilde X^{l'}_{-1, 1}\bigl(a^{(\infty)} b^{(\infty)} c^{(\infty)}, d^{(\infty)}\bigr)\]
for \(l, l' \in \{-2, 2\}\), \(a^{(\infty)} \in R^{(\infty, S)}_{-1, l}\), \(b^{(\infty)} \in R^{(\infty, S)}_{l, \pm i}\), \(c^{(\infty)} \in R^{(\infty, S)}_{\pm i, l'}\), \(d^{(\infty)} \in R^{(\infty, S)}_{l1}\), it follows from lemma \ref{ring-generation} that
\[\widetilde X^l_{-1, 1}\bigl(a^{(\infty)}, b^{(\infty)} c^{(\infty)}\bigr) = \widetilde X^{l'}_{-1, 1}\bigl(a^{(\infty)} b^{(\infty)}, c^{(\infty)}\bigr)\]
for \(l, l' \in \{-2, 2\}\), \(a^{(\infty)} \in R^{(\infty, S)}_{-1, l}\), \(b^{(\infty)} \in R^{(\infty, S)}_{ll'}\), \(c^{(\infty)} \in R^{(\infty, S)}_{l'1}\). By the same lemma, there is a morphism \(\widetilde X_{-1, 1} \colon R^{(\infty, S)}_{-1, 1} \to \stunit^{(\infty, S)}(R, \Delta; \Phi / \alpha)\) of pro-groups such that \(\widetilde X^{\pm 2}_{-1, 1}\bigl(a^{(\infty)}, b^{(\infty)}\bigr) = \widetilde X_{-1, 1}\bigl(a^{(\infty)} b^{(\infty)}\bigr)\). It satisfies all claims from the statement by lemma \ref{ring-generation}.
\end{proof}

\begin{lemma*}
There exists a unique morphism \(\widetilde X_1 \colon \Delta^{0, (\infty, S)}_i \to \stunit^{(\infty, S)}(R, \Delta; \Phi / \alpha)\) of pro-groups restricting to \(\widetilde X_{-1, 1}\) and \(\widetilde X^i_1\).
\end{lemma*}
\begin{proof}
There is a commutation relation
\begin{align*}
\bigl[\widetilde X^i_1(u^{(\infty)}, b^{(\infty)}), \widetilde X^j_1\bigl(v^{(\infty)}, c^{(\infty)}\bigr)\bigr] &= \widetilde X_{-1, 1}\bigl(-\inv{b^{(\infty)}} \inv{\pi(u^{(\infty)})} \pi(v^{(\infty)}) c^{(\infty)}\bigr)
\end{align*}
for all \(i, j \notin \{0, -1, 1\}\). Also,
\begin{align*}
\widetilde X^i_1\bigl(u^{(\infty)} \dotplus {u'}^{(\infty)}, b^{(\infty)}\bigr) &= \widetilde X_1^i\bigl(u^{(\infty)}, b^{(\infty)}\bigr)\, \widetilde X_1^i\bigl({u'}^{(\infty)}, b^{(\infty)}\bigr),\\
\widetilde X^i_1\bigl(u^{(\infty)}, b^{(\infty)} + {b'}^{(\infty)}\bigr) &= \widetilde X^i_1\bigl(u^{(\infty)}, b^{(\infty)}\bigr)\, \widetilde X_{-1, 1}\bigl(\inv{{b'}^{(\infty)}} \rho(u^{(\infty)}) b^{(\infty)}\bigr)\, \widetilde X^i_1\bigl(u^{(\infty)}, {b'}^{(\infty)}\bigr).
\end{align*}
If \(\pm 1, \pm i, \pm j\) are all distinct and non-zero, then it is easy to check that
\begin{align*}
\widetilde X^i_1\bigl(u^{(\infty)}, b^{(\infty)} c^{(\infty)}\bigr) &= \widetilde X^j_1\bigl(u^{(\infty)} \cdot b^{(\infty)}, c^{(\infty)}\bigr),\\
\widetilde X^j_1\bigl(\phi(a^{(\infty)} b^{(\infty)}), c^{(\infty)}\bigr) &= \widetilde X_{-1, 1}\bigl(\inv{c^{(\infty)}} a^{(\infty)} b^{(\infty)} c^{(\infty)}\bigr)
\end{align*}
for \(u^{(\infty)} \in \Delta^{0, (\infty, S)}_i\), \(a^{(\infty)} \in R^{(\infty, S)}_{-j, i}\), \(b^{(\infty)} \in R^{(\infty, S)}_{ij}\), \(c^{(\infty)} \in R^{(\infty, S)}_{j1}\). Let \(i \neq 0, \pm 1, \pm 2\). Since
\[\widetilde X^l_1\bigl(u^{(\infty)}, b^{(\infty)} c^{(\infty)} d^{(\infty)}\bigr) = \widetilde X^{l'}_1\bigl(u^{(\infty)} \cdot b^{(\infty)} c^{(\infty)}, d^{(\infty)}\bigr)\]
for \(l, l' \in \{-2, 2\}\), \(u^{(\infty)} \in \Delta^{0, (\infty, S)}_l\), \(b^{(\infty)} \in R^{(\infty, S)}_{l, \pm i}\), \(c^{(\infty)} \in R^{(\infty, S)}_{\pm i, l'}\), \(d^{(\infty)} \in R^{(\infty, S)}_{l'1}\), it follows from lemma \ref{ring-generation} that
\[\widetilde X^l\bigl(u^{(\infty)}, b^{(\infty)} c^{(\infty)}\bigr) = \widetilde X^{l'}\bigl(u^{(\infty)} \cdot b^{(\infty)}, c^{(\infty)}\bigr)\]
for \(l, l' \in \{-2, 2\}\), \(u^{(\infty)} \in \Delta^{0, (\infty, S)}_l\), \(b^{(\infty)} \in R^{(\infty, S)}_{ll'}\), \(c^{(\infty)} \in R^{(\infty, S)}_{l'1}\). Similarly,
\[\widetilde X^{\pm 2}_1\bigl(\phi(a^{(\infty)}), b^{(\infty)}\bigr) = \widetilde X_{-1, 1}\bigl(\inv{b^{(\infty)}} a^{(\infty)} b^{(\infty)}\bigr).\]
Hence by lemma \ref{form-presentation} there is a morphism \(\widetilde X_1 \colon \Delta^{0, (\infty, S)}_1 \to \stunit^{(\infty, S)}(R, \Delta; \Phi / \alpha)\) of pro-groups such that \(\widetilde X^{\pm 2}_1\bigl(u^{(\infty)}, b^{(\infty)}\bigr) = \widetilde X_1\bigl(u^{(\infty)} \cdot b^{(\infty)}\bigr)\) and \(\widetilde X_{-1, 1}\bigl(a^{(\infty)}\bigr) = \widetilde X_1\bigl(\phi(a^{(\infty)})\bigr)\). Moreover, \(\widetilde X^i_1\bigl(u^{(\infty)}, b^{(\infty)}\bigr) = \widetilde X_1\bigl(u^{(\infty)} \cdot b^{(\infty)}\bigr)\) for all \(i \neq 0, \pm 1\).
\end{proof}

Clearly, \(\widetilde X_1\) satisfies (St2), (St5), (St6), (St7), and the case of (St8) where \(\widetilde X_1\) appears in the right hand side. For any \(i \neq 0, \pm 1\) choose some \(j \neq 0, \pm 1, \pm i\). The remaining case of (St8) follows from the calculations
\begin{align*}
\bigl[\widetilde X^j_{-1, 1}\bigl(a^{(\infty)}, b^{(\infty)}\bigr), \widetilde X_{1i}(c^{(\infty)})\bigr] &= \widetilde X_{-1, i}\bigl((a^{(\infty)} b^{(\infty)} - \inv{a^{(\infty)} b^{(\infty)}}) c^{(\infty)}\bigr)\, \widetilde X_i\bigl(\dotminus \phi(a^{(\infty)} b^{(\infty)}) \cdot c^{(\infty)}\bigr),\\
\bigl[\widetilde X^j_1\bigl(u^{(\infty)}, b^{(\infty)}\bigr), \widetilde X_{1i}(c^{(\infty)})\bigr] &= \widetilde X_{-1, i}\bigl(\rho(u^{(\infty)} \cdot b^{(\infty)}) c^{(\infty)}\bigr)\, \widetilde X_i\bigl(\dotminus u^{(\infty)} \cdot (-b^{(\infty)} c^{(\infty)})\bigr)
\end{align*}
and lemma \ref{form-generation}.

From lemma \ref{steinberg-presentation} if follows that there is a unique morphism \(G_\alpha \colon \stunit(R, \Delta; \Phi)^{(\infty)} \to \stunit(R, \Delta; \Phi / \alpha)^{(\infty)}\) of pro-groups such that \(G_\alpha \circ X_{ij} = \widetilde X_{ij}\) and \(G_\alpha \circ X_j = \widetilde X_j\). Moreover, \(G_\alpha \circ F_\alpha = \id\). Since \(F_\alpha\) is an epimorphism of pro-groups, it is an isomorphism with the inverse \(G_\alpha\).
\end{proof}

\section{Actions of local groups}

By lemma \ref{ofa-homotopes}, the group \(\diag(S^{-1} R, S^{-1} \Delta; \Phi)\) acts on \(\stunit^{(\infty, S)}(R, \Delta; \Phi)\) and on \((R^{(\infty, S)}, \Delta^{(\infty, S)})\) by automorphisms making \(\stmap\) equivariant. By propositions \ref{short-presentation} and \ref{ultrashort-presentation}, for each \(\alpha \in \Phi\) the group \(\diag(S^{-1} R, S^{-1} \Delta; \Phi / \alpha)\) also acts on \(\stunit^{(\infty, S)}(R, \Delta; \Phi)\) making \(\stmap\) equivariant under an assumption on \(n\). Moreover, under an assumption on \(n\) every element from \(\diag(S^{-1} R, S^{-1} \Delta; \Phi / \alpha) * \diag(S^{-1} R, S^{-1} \Delta; \Phi / \beta)\) with trivial image in \(\unit(S^{-1} R, S^{-1} \Delta)\) acts trivially on the Steinberg pro-group by lemma \ref{root-generation} if \(\Phi / \{\alpha, \beta\}\) is defined. In particular, \(\stunit(S^{-1} R, S^{-1} \Delta) \rtimes \diag(S^{-1} R, S^{-1} \Delta)\) acts on \(\stunit^{(\infty, S)}(R, \Delta)\) making \(\stmap\) equivariant if \(n \geq 4\) or \(n \geq 3\) and the orthogonal hyperbolic family is strong. Such actions are compatible for various \(S\): if \(S \subseteq S'\) are two multiplicative subsets, then the morphism \(\stunit^{(\infty, S')}(R, \Delta; \Phi) \to \stunit^{(\infty, S)}(R, \Delta; \Phi)\) of pro-groups is equivariant under the action of each \(\diag(S^{-1} R, S^{-1} \Delta; \Phi / \alpha)\) under an assumption on \(n\).

Recall that a sequence \(a_1, \ldots, a_k\) in a unital ring \(A\) is called left unimodular if there is a sequence \(b_1, \ldots, b_k\) in \(A\) such that \(\sum_i b_i a_i = 1\). The ring \(A\) satisfies \(\mathrm{sr}(A) \leq k - 1\) if for every left unimodular sequence \(a_1, \ldots, a_k\) there are elements \(c_1, \ldots, c_{k - 1} \in A\) such that \(a_1 + c_1 a_k, \ldots, a_{k - 1} + c_{k - 1} a_k\) is also unimodular. More generally, let \(M_i\) be right \(A\)-modules for \(1 \leq i \leq k\). A sequence \(m_1, \ldots, m_k\) for \(m_i \in M_i\) is called left unimodular if there are homomorphisms \(f_i \colon M_i \to A\) such that \(\sum_i f_i(m_i) = 1\).

Now let \((R, \Delta)\) be an odd form \(K\)-algebra with a free orthogonal hyperbolic family \(\eta_1, \ldots, \eta_n\). Let \(\Lambda = \{\rho(u) \mid u \in \Delta^0_{-1}, \pi(u) = 0\}\), this is an even form parameter in the sense that \(\{a - \inv a \mid a \in R_{1, -1}\} \leq \Lambda \leq \{a \in R_{1, -1} \mid a + \inv a = 0\}\) and \(\inv a \Lambda a \leq \Lambda\) for any \(a \in R_{11}\). We say that \(\Lambda\mathrm{sr}(\eta_1; R, \Delta) \leq k - 1\) if \(\mathrm{sr}(R_{11}) \leq k - 1\) and for every unimodular sequence \(a_{-k}, \ldots, a_{-1}, b_1, \ldots, b_k\) with \(a_k \in R_{-1, 1}\) and \(b_k \in R_{11}\) there is a matrix \(\{c_{ij} \in R_{1, -1}\}_{i, -j = 1}^k\) such that \(c_{ij} = -\inv{c_{-j, -i}}\), \(c_{i, -i} \in \Lambda\), and the sequence \(a_1 + \sum_i c_{1i} b_i, \ldots, a_k + \sum_i c_{ki} b_i\) is left unimodular in \(R_{11}\). For example, if there are elements \(e_{1, -1} \in R_{1, -1}\) and \(e_{-1, 1} \in R_{-1, 1}\) such that \(e_{1, -1} e_{-1, 1} = e_1\) and \(e_{-1, 1} e_{1, -1} = e_{-1}\), then \(\Lambda\mathrm{sr}(\eta_1; R, \Delta) \leq k\) is equivalent to the condition \(\Lambda\mathrm{sr}(R_{11}, \Lambda e_{-1, 1}) \leq k\) from \cite{UnitStabBak1, UnitStabBak2}.

We say that \(\Lambda\mathrm{lsr}(\eta_1; R, \Delta) \leq k\) if for every maximal ideal \(\mathfrak m\) of \(K\) the inequality \(\Lambda\mathrm{sr}(\eta_1; R_{\mathfrak m}, \Delta_{\mathfrak m}) \leq k\) holds. For example, it is easy to prove that \(\Lambda\mathrm{lsr}(R, \Delta) \leq 1\) if \(R\) is quasi-finite over \(K\) (i.e. it is a direct limit of finite \(K\)-algebras).

The next proposition shows surjective stability for \(\kunit_1(R, \Delta)\) in our generality. If there are \(e_{ij} \in R_{ij}\) for all \(i, j \neq 0\) such that \(e_{ij} e_{jk} = e_{ik}\) and \(e_{ii} = e_i\), then this result already appears in \cite{OddDefPetrov}.

\begin{prop}\label{surjective-k1}
Suppose that the orthogonal hyperbolic family is free and \(\Lambda\mathrm{sr}(\eta_1; R, \Delta) \leq n - 1\). Then \(\unit(R, \Delta)\) is generated by \(\eunit(R, \Delta)\) and \(\unit_{|\eta_n|'}\).
\end{prop}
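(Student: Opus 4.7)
The plan is to adapt Bass' classical surjective $K_1$-stability argument to the odd unitary setting. Given $g \in \unit(R, \Delta)$, I will multiply on the left by elementary transvections to push it into the subgroup generated by $\eunit(R, \Delta)$ and $\unit_{|\eta_n|'}$. The basic data is the ``last column'' of $g$, consisting of the linear entries $b_i = e_i \beta(g) e_n \in R_{in}$ for $i \neq 0$, the residual $e_0 \alpha(g) e_n$, and the form refinement $\gamma(g) \cdot e_n \in \Delta_n$. The relation $\rho(\gamma(g)) = \inv{\beta(g)}$ ties the $\rho$-part of this column to the linear part, and the identity $\alpha(g)^{-1} = \inv{\alpha(g)}$ forces the whole package to be left unimodular in precisely the ``$\Lambda$-decorated'' sense encoded by the definition of $\Lambda\mathrm{sr}(\eta_1; R, \Delta)$.

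Because the orthogonal hyperbolic family is free, the Weyl group $(\mathbb{Z}/2)^n \rtimes \mathrm{S}_n$ acts on it by elements realized inside $\eunit(R, \Delta)$, so I may replace $\eta_n$ by $\eta_1$ throughout and apply the hypothesis $\Lambda\mathrm{sr}(\eta_1; R, \Delta) \leq n - 1$ with $k = n$. The $\Lambda$-clause of the definition produces a hermitian matrix $\{c_{ij}\}$ whose entries I realize as Steinberg transvections: long-root entries $c_{ij}$ with $i \neq -j$ become $T_{ij}(c_{ij})$, while the $\Lambda$-valued entries $c_{i, -i}$ correspond to short-root generators $T_i(u)$ with $\rho(u) = c_{i, -i}$. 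The resulting product kills the form part of the column and turns its linear part into a left-unimodular tuple in $R_{11}$; the ordinary $\mathrm{sr}(R_{11}) \leq n - 1$ clause of the hypothesis then produces a further string of long-root transvections $T_{1i}$ bringing this tuple to the standard basis vector.

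At this point the modified $e g$ lies in the opposite parabolic $P_{-\eta_1}$, so lemma \ref{parabolic-extraction} furnishes an additional $T^{\eta_1}(u) \in \eunit(R, \Delta)$ that places it in the Levi subgroup $L_{\eta_1} = D^{\eta_1}(*) \times \unit_{|\eta_1|'}$ of lemma \ref{maximal-parabolic}. Writing the result as $D^{\eta_1}(a) \cdot h$ with $h \in \unit_{|\eta_1|'}$, the proof is finished by the Whitehead-style identity $D^{\eta_1}(a) \cdot D^{\eta_2}(a^{-1}) \in \eunit(R, \Delta)$, a consequence of the Morita equivalence of $\eta_1$ and $\eta_2$ together with the standard presentation of $\mathrm{diag}(a, a^{-1})$ inside two hyperbolic pairs as a product of elementary transvections. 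Since $D^{\eta_2}(a^{-1}) \in \unit_{|\eta_1|'}$, this exhibits $D^{\eta_1}(a)$, and hence $g$ itself, in $\eunit(R, \Delta) \cdot \unit_{|\eta_1|'}$; conjugating back by the Weyl element that swaps $\eta_1$ and $\eta_n$ gives the stated form.

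The main obstacle is the second step: translating the abstract matrix $\{c_{ij}\}$ provided by the stable range axiom into an explicit product of Steinberg generators while tracking the simultaneous effect on the linear column and on the form datum $\gamma(g) \cdot e_n$. The hermitian constraint $c_{ij} = -\inv{c_{-j, -i}}$ means that individual Steinberg transvections already move pairs of coordinates at once, and the $\Lambda$-valued entries couple the linear reduction to genuinely form-level operations via the short-root generators $T_i(u)$. Once this bookkeeping is under control, the parabolic extraction and Whitehead calculations of the third step are essentially formal manipulations inside $L_{\eta_1}$.
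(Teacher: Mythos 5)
Your overall strategy --- reduce the last column of \(g\) by elementary transvections using the two clauses of the stable range hypothesis, then finish with parabolic extraction into \(P_{-\eta_1}\) --- is the same as the paper's, and your endgame, while different, is sound: the paper instead arranges \(e_n\alpha(\cdot)e_n = e_n\) exactly (using \(n \geq 2\) and freeness to adjust an invertible corner entry to the identity), so that lemma \ref{parabolic-extraction} lands directly in \(T^{-\eta_n}(*) \rtimes \unit_{|\eta_n|'}\) with no dilation factor; your Whitehead-style identity \(D^{\eta_1}(a)\,D^{\eta_2}(a^{-1}) \in \eunit(R,\Delta)\) disposes of the dilation equally well once the family is free.

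The genuine gap is in your second step, before the \(\Lambda\)-clause can be invoked. The column of \(g\) has, besides the entries \(e_i\alpha(g)e_n\) for \(i \neq 0\), a component supported on \(e_0 = 1 - \sum_{i\neq 0} e_i\), and the unimodularity certificate coming from \(\alpha(g)^{-1} = \inv{\alpha(g)}\) is \(e_n = \sum_{i=-n}^{n} e_n\inv{\alpha(g)}\,e_i\,\alpha(g)e_n\), which involves the term with \(i = 0\). The definition of \(\Lambda\mathrm{sr}(\eta_1; R, \Delta)\) only accepts sequences with entries in \(R_{-1,1}\) and \(R_{11}\) (equivalently, indexed by \(\pm 1, \dots, \pm n\) after transport along the free basis); there is no slot for the \(R_{0n}\)-component, so your assertion that the column is unimodular ``in precisely the sense encoded by the definition'' fails whenever \(e_0 \neq 0\), i.e.\ whenever the hyperbolic family is not complete. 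The paper first eliminates this component by a separate application of \(\mathrm{sr}(R_{11}) \leq n-1\) and general properties of the stable rank, producing \(b_i \in R_{in}\) such that \(\{e_i\alpha(g)e_n\}_{i=-n}^{-1} \sqcup \{e_i\alpha(g)e_n + b_i\inv{\alpha(g)}e_0\alpha(g)e_n\}_{i=1}^{n}\) is left unimodular; realizing these corrections requires the ultrashort transvections \(T_{\mp i}(u)\) with \(\pi(u) \in R_{0,\mp i}\), which are the only elementary generators whose \(\beta\)-component mixes the \(e_0\)-row into an \(e_{\pm i}\)-row. Your proposal never explains how the \(e_0\)-residual is absorbed, and without that step the reduction to a left unimodular tuple in \(R_{11}\) cannot start. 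The rest of your bookkeeping (realizing the hermitian matrix \(\{c_{ij}\}\) by \(T_{i,-j}\)-type transvections off the diagonal and by \(T_{-i}(u)\) with \(\pi(u)=0\), \(\rho(u) = c_{i,-i} \in \Lambda\) on the diagonal) matches the intended argument.
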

\begin{proof}
Recall that \(\unit_{|\eta_n|'}\) normalizes \(\eunit(R, \Delta) = \langle T^{\eta_n}(*), T^{-\eta_n}(*) \rangle\). Let \(g \in \unit(R, \Delta)\) be any element. First of all, suppose that \(e_n \alpha(g) e_n = e_n\) (we may consider \(e_i \alpha(g) e_i\) as \(e_i \beta(g) e_i + e_i \in R_{ii}\)). In this case there is a unique \(u \in \Delta^{|n|'}_n\) such that \(T^{\eta_n}(\dotminus u)\, g \in P_{\eta_{-n}}\) by lemma \ref{parabolic-extraction}. For such elements of the parabolic subgroup \(P_{\eta_{-n}}\) (i.e. with trivial component from \(D^{\eta_n}(*)\)) the claim is clear.

In the general case we multiply \(g\) by elementary transvections from the left until \(e_n \beta(g) e_n\) becomes \(0\). Note that the sequence \(\{e_n \inv{\alpha(g)} e_i \alpha(g) e_n\}_{i = -n}^0 \sqcup \{e_i \alpha(g) e_n\}_{i = 1}^n\) is left unimodular (where \(e_0 = 1 - \sum_{i \neq 0} e_i \in R \rtimes K\)). By \(\mathrm{sr}(R_{11}) \leq n - 1\) and properties of the stable rank there are \(b_i \in R_{in}\) for \(1 \leq i \leq n\) such that \(\{e_i \alpha(g) e_n\}_{i = -n}^{-1} \sqcup \{e_i \alpha(g) e_n + b_i \inv{\alpha(g)} e_0 \alpha(g) e_n\}_{i = 1}^n\) is left unimodular. Hence there is \(h \in \eunit(R, \Delta)\) such that \(\{e_i \alpha(hg) e_n\}_{i \neq 0}\) is left unimodular. By \(\Lambda\mathrm{sr}(R, \Delta) \leq n - 1\) there is \(h' \in \eunit(R, \Delta)\) such that \(e_n \alpha(h'hg) e_n\) is invertible in \(R_{nn}\). Then clearly there is \(h'' \in \eunit(R, \Delta)\) such that \(e_n \alpha(h''h'hg) e_n = e_n\), since \(n \geq 2\) (if \(n = 1\), then the stable rank condition implies that \(R_{nn} = 0\) and there is nothing to prove).
\end{proof}

By the main results of \cite{StabVor, Weibo}, \(\eunit(R, \Delta) \cap \unit_{|\eta_n|'}\) is generated by \(T_{ij}(a)\) and \(T_j(u)\) for \(|i|, |j| < n\) if \(\mathrm{sr}(R_{11}) \leq n - 2\). The proof actually implies surjective stability for \(\kunit_2(R, \Delta)\).

\begin{prop}\label{surjective-k2}
Suppose that the orthogonal hyperbolic family is free, \(n \geq 4\) or \(n \geq 3\) and the orthogonal hyperbolic family is strong, and \(\mathrm{sr}(R_{11}) \leq n - 2\). Then every element from \(\kunit_2(R, \Delta)\) may be generated by \(X_{ij}(a)\) and \(X_j(u)\) for \(|i|, |j| < n\).
\end{prop}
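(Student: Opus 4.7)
Let $H\subseteq\stunit(R,\Delta)$ be the subgroup generated by $X_{ij}(a)$ and $X_j(u)$ with $|i|,|j|<n$; viewing $(R_{|\eta_n|'|\eta_n|'},\Delta^{|\eta_n|'}_{|\eta_n|'})$ as an augmented odd form subalgebra with orthogonal hyperbolic family $\eta_1,\ldots,\eta_{n-1}$, $H$ is canonically identified with the image of $\stunit(R_{|\eta_n|'|\eta_n|'},\Delta^{|\eta_n|'}_{|\eta_n|'})$. Set $V_+=\langle X_{in}(a),X_n(u):|i|<n\rangle$ and $V_-=\langle X_{i,-n}(a),X_{-n}(u):|i|<n\rangle$. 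Ordering the indices as $-n<\cdots<n$, both $V_\pm$ lie inside the full unipotent subgroups $U^\pm(R,\Delta;\Phi)$, so $\stmap$ is injective on each $V_\pm$ with image the unipotent radical $T^{\pm\eta_n}(*)$.

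My plan is to establish the Gauss-type normal form $g=v_-\cdot h\cdot v_+$ ($v_\pm\in V_\pm$, $h\in H$) for every $g\in\stunit(R,\Delta)$. Granting this, for $g\in\kunit_2(R,\Delta)$ one has $\stmap(v_-)\,\stmap(h)\,\stmap(v_+)=1$ in $\unit(R,\Delta)$ with $\stmap(v_\pm)\in T^{\pm\eta_n}(*)$ and $\stmap(h)\in\unit_{|\eta_n|'}\subseteq L_{\eta_n}$. By Lemma~\ref{maximal-parabolic} and the Pierce decomposition, $T^{-\eta_n}(*)\cap P_{\eta_n}=1=L_{\eta_n}\cap T^{\eta_n}(*)$, so this factorization is unique, forcing $\stmap(v_\pm)=1=\stmap(h)$; injectivity of $\stmap$ on $V_\pm$ then gives $v_\pm=1$, whence $g=h\in H$. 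The normal form itself is proved by induction on the length of a word representing $g$, by showing that $V_-\cdot H\cdot V_+$ is closed under right multiplication by every Steinberg generator. Multiplication by a generator of $H$ or of $V_+$ is routine because $H$ normalizes each $V_\pm$ via (St3), (St4), (St7), (St8); right multiplication by a $V_-$-generator requires commuting past $V_+$, and for non-antiparallel root pairs the Chevalley relations (St4)--(St6), (St8) place the commutators in $H$ --- for example $[X_{i,-n}(a),X_{kn}(b)]=X_{k,-i}(b\inv a)\in H$ when $k\neq\pm i$, and $[X_{i,-n}(a),X_{in}(b)]=X_{-i}(\phi(b\inv a))\in H$ --- so the rearrangement $v_+v_-=h'\cdot v_-v_+$ keeps us inside $V_-HV_+$.

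The main obstacle is the antiparallel-root pairs $(\mathrm e_n-\mathrm e_i,-\mathrm e_n+\mathrm e_i)$, $(\pm\mathrm e_n,\mp\mathrm e_n)$, and $(\pm2\mathrm e_n,\mp2\mathrm e_n)$: their commutators $[X_\alpha,X_{-\alpha}]$ map in $\unit(R,\Delta)$ to Whitehead/dilation elements of $L_{\eta_n}$ that are not governed by any Chevalley identity in $\stunit(R,\Delta)$. To handle them I follow the injective $\kunit_1$-stability arguments of \cite{StabVor,Weibo}: the hypothesis $\mathrm{sr}(R_{11})\le n-2$ lets the underlying dilation in $\unit(R,\Delta)$ be factored as a product of elementary transvections from $\eunit\cap\unit_{|\eta_n|'}$, and Lemmas~\ref{ring-generation} and~\ref{form-generation}, combined with $n\ge 4$ (or $n\ge 3$ with the strong orthogonal hyperbolic family), permit lifting this factorization to an explicit word in $H$. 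The resulting identity rewrites each antiparallel commutator as an element of $V_+\cdot H\cdot V_-$, closing the induction. This Whitehead-lifting is the sole place where the stable rank hypothesis enters, and it is the technical heart of the proof.
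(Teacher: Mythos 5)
Your proposed normal form \(\stunit(R, \Delta) = V_-\, H\, V_+\) is false, and this sinks the whole argument. Every element \(g\) of the set \(T^{-\eta_n}(*)\cdot \unit_{|\eta_n|'} \cdot T^{\eta_n}(*)\) satisfies the rigid identity \(e_{-n}\, \alpha(g)\, e_{-n} = e_{-n}\): indeed \(e_{-n}\alpha(t_-) = e_{-n}\), \(\alpha(t_+)e_{-n} = e_{-n}\), and \(\alpha(h)\) fixes \(e_{-n}\) on both sides for \(h \in \unit_{|\eta_n|'}\). Since \(\stmap(V_\pm) \subseteq T^{\pm\eta_n}(*)\) and \(\stmap(H) \subseteq \unit_{|\eta_n|'}\), the image of \(V_- H V_+\) under \(\stmap\) lies in this "big cell". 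But the Weyl element \(w = X_{1n}(e_{1n})\, X_{n1}(-e_{n1})\, X_{1n}(e_{1n})\) (available because the family is free) has \(e_{-n}\,\alpha(\stmap(w))\, e_{-n} = 0\), since its image interchanges \(\eta_1\) and \(\eta_n\). So \(w \notin V_- H V_+\), and no induction on word length can close the set \(V_- H V_+\) under right multiplication by generators. The failure occurs exactly at the antiparallel pairs you flag as "the main obstacle": the product \(X_{in}(a)\, X_{ni}(b)\) generates Weyl-type elements that leave the big cell, and no stable rank hypothesis can repair an identity that already fails in \(\unit(R, \Delta)\). Moreover, even where your plan is salvageable in spirit, the proposed mechanism — "lifting" a factorization of the dilation, valid in \(\eunit(R,\Delta)\), to a word in \(H \subseteq \stunit(R,\Delta)\) — is circular: identities in \(\eunit\) lift to \(\stunit\) only modulo \(\kunit_2\), which is precisely what is not yet under control at this stage.

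The paper's proof proceeds quite differently. It imports the stability lemmas 7 and 8 of \cite{StabVor} at the level of the Steinberg group to obtain a three-factor decomposition \(\stunit(R, \Delta) = PLQ\), where \(P\) is the full Steinberg parabolic \(\langle X_{ij}(*), X_j(*) \mid i \neq n,\ j \neq -n\rangle\) (strictly larger than your \(H V_+\)), \(L\) is a specific piece of the opposite unipotent radical, and \(Q\) mixes positive and negative generators; the hypothesis \(\mathrm{sr}(R_{11}) \leq n - 2\) is spent on establishing this decomposition, not on rewriting commutators. Given \(plq \in \kunit_2(R, \Delta)\) one gets \(l = 1\) by uniqueness in \(\unit(R,\Delta)\), and the residual factor \(d \in \langle X_{ij}(*) \mid 2 \leq i, j \leq n\rangle\) is moved into the range \(|i|, |j| < n\) by conjugating with the very Weyl element \(w\) above — using that \(d\) commutes with \(w\) because \(\stmap(d) \in \unit_{|\eta_n|'}\) acts trivially on \(X_{1n}(*)\) and \(X_{n1}(*)\). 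If you want to repair your approach, you would need to enlarge your normal form to accommodate Weyl elements (i.e.\ work with something like the paper's \(PLQ\) rather than a single big cell), at which point you have essentially reconstructed the paper's argument.
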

\begin{proof}
The proofs of lemmas 7 and 8 from \cite{StabVor} actually work for the Steinberg group. This means that \(\stunit(R, \Delta) = PLQ\), where \(P = \langle X_{ij}(*), X_j(*) \mid -i, j \neq -n \rangle\) is a Steinberg parabolic subgroup, \(L = \langle X_{i, -n}(*), X_{-n}(*) \mid i \geq -1 \rangle\), and \(Q = \langle X_{ij}(*), X_j(*) \mid 2 \leq j \leq n \rangle\). Now let \(plq \in \kunit_2(R, \Delta)\) for some \(p \in P\), \(l \in L\), and \(q \in Q\). It follows that \(l = 1\), hence we may assume that \(p \in \langle X_{ij}(*), X_j(*) \mid |i|, |j| < n \rangle\) (the factor from \(\langle X_{in}(*), X_n(*) \rangle\) may be pushed into \(q\)). Let \(w = X_{1n}(e_{1n})\, X_{n1}(-e_{n1})\, X_{1n}(e_{1n}) \in \stunit(R, \Delta)\) be an element swapping \(\eta_1\) with \(\eta_n\), \(q = q'd\) for \(q' \in \langle X_{ij}(*), X_j(*) \mid i \leq 1 \text{ and } 2 \leq j \rangle\) and \(d \in \langle X_{ij}(*) \mid 2 \leq i, j \leq n \rangle\). Obviously, \(q' \in \langle X_{ij}(*), X_j(*) \mid -n < i \leq 1 \text{ and } 2 \leq j < n \rangle\). It remains to prove that \(d \in \langle X_{ij}(*) \mid 1 \leq i, j \leq n \rangle\). But \(d\) commutes with \(w\) since \(d\) trivially acts on \(X_{1n}(*)\) and \(X_{n1}(*)\) (recall that \(\stmap(d) \in \unit_{|n|'}(R, \Delta)\)), and \(\up wd \in \langle X_{ij}(*) \mid 1 \leq i, j \leq n \rangle\).
\end{proof}

Now we deal with the semi-local case. Recall that a non-unital ring \(A\) is called semi-local if its factor-ring by the Jacobson radical \(\mathrm J(A)\) is unital semi-simple, where \(\mathrm J(A)\) is the set of elements generating ideals of quasi-invertible elements. For example, if \(A\) is a finite algebra over a semi-local commutative ring \(K\), then \(\mathrm J(K) A \leq \mathrm J(A)\) and \(A\) is semi-local (in this case \(A / \mathrm J(A)\) is a product of finite-dimensional semisimple algebras over the residue fields of \(K\)).
\begin{prop}\label{gauss}
Suppose that \(R\) is semi-local and \(n \geq 1\). Then there is Gauss decomposition
\[\unit(R, \Delta) = \stmap(U^+(R, \Delta; \Phi / \mathrm e_1))\, \stmap(U^-(R, \Delta; \Phi / \mathrm e_1))\, \stmap(U^+(R, \Delta; \Phi / \mathrm e_1))\, \diag(R, \Delta; \Phi / \mathrm e_1).\]
Moreover, \(\unit(R, \Delta)\) is isomorphic to the group \(G\) generated by \(T_{ij}(*)\) and \(\diag(R, \Delta; \Phi / \mathrm e_j)\) with the Steinberg relations (the ultrashort transvections are considered as elements of \(\diag(R, \Delta; \Phi / \mathrm e_j)\)) and the relations of type \(g = 1\) for every \(g\) in the free product of \(T_{ij}(*)\), \(T_{-i, j}(*)\), \(T_{i, -j}(*)\), \(T_{-i, -j}(*)\), \(\diag(R, \Delta; \Phi / \mathrm e_i)\), \(\diag(R, \Delta; \Phi / \mathrm e_j)\) for \(1 \leq i < j \leq n\) with trivial image in \(\unit(R, \Delta)\).
\end{prop}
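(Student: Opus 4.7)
The plan is to prove the Gauss decomposition first and then derive the abstract presentation from it.

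For the Gauss decomposition, I would reduce to the semisimple case by passing modulo the Jacobson radical $\Jac(R)$. Because $R$ is semi-local, $R / \Jac(R)$ is unital semisimple and decomposes as a finite product of simple Artinian rings; the induced odd form algebra factors accordingly, and over each simple factor the classical Bruhat big-cell decomposition applies. To lift back to $R$, given $g \in \unit(R, \Delta)$ I would use semi-locality to find $u^+ \in \stmap(U^+(R, \Delta; \Phi / \mathrm e_1))$ with $e_{-1} \alpha(u^+ g) e_{-1}$ invertible in $R_{-1,-1}$; this is a unimodularity-type adjustment on the Pierce ``first column'' of $g$ relative to $\eta_1$ and is possible because the stable rank of $R / \Jac(R)$ is one and units lift through the Jacobson radical. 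Once invertibility is achieved, lemma \ref{parabolic-extraction} applied to the hyperbolic pair $-\eta_1$ produces an element that places the residual factor inside the parabolic opposite to $P_{\eta_1}$, and iterated use of the Pierce decomposition from section 4 together with lemma \ref{maximal-parabolic} splits the remaining pieces into the $U^+ U^- U^+ D$ form. The final rearrangement of factors uses that $\diag(R, \Delta; \Phi / \mathrm e_1)$ normalises each $\stmap(U^\pm(R, \Delta; \Phi / \mathrm e_1))$.

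For the presentation, let $G$ denote the abstract group from the statement. The canonical homomorphism $\tau \colon G \to \unit(R, \Delta)$ is well-defined: the Steinberg relations are satisfied by the elementary transvections by lemma \ref{maximal-parabolic}, and the rank-two relations hold tautologically in $\unit(R, \Delta)$. Surjectivity of $\tau$ follows from the Gauss decomposition of the previous paragraph, since $\stmap(U^\pm(R, \Delta; \Phi / \mathrm e_1))$ is generated by the appropriate images of $T_{ij}$ (via the homomorphism $F_{\mathrm e_1}$ from section 5) and $\diag(R, \Delta; \Phi / \mathrm e_1)$ appears among the prescribed generators. For injectivity, I would show that the Gauss decomposition already holds in $G$: every word in the generators can be reduced, using only the Steinberg and rank-two relations, to the canonical form $u^+_1 u^-_1 u^+_2 d$. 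Every algorithmic step in this reduction (commuting two $T_{ij}$, conjugating a $T_{ij}$ by a Levi element, or absorbing a Levi factor into a neighbouring one) involves at most two hyperbolic-pair indices and is therefore among the imposed rank-two relations. Combined with the uniqueness of the Gauss decomposition in $\unit(R, \Delta)$, which comes from the transversality of the unipotent radicals and the Levi, this gives injectivity of $\tau$.

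The main obstacle is the semi-local unimodularity argument establishing invertibility of $e_{-1} \alpha(u^+ g) e_{-1}$, which must respect the unitary constraint $\alpha(g)^{-1} = \inv{\alpha(g)}$ and the odd-form compatibility: one must choose elementary transvections that clear Pierce components while preserving the unitary relations. In the semisimple quotient, $R / \Jac(R)$ is a product of matrix rings over division rings and classical stable-rank-one manipulation applies; the lift uses that $1 + \Jac(R) \subseteq R^*$ and that the transvection maps surject modulo $\Jac(R)$. A secondary subtlety is that the normal-form reduction inside $G$ must proceed purely via rank-two steps; this is handled because every Steinberg-type commutation among the generators involves at most two hyperbolic-pair indices at a time.
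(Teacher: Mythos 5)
Your overall strategy coincides with the paper's: multiply \(g\) by permitted elements until a Pierce corner of \(\alpha(g)\) becomes invertible, invoke Lemma \ref{parabolic-extraction}, and then obtain the presentation by establishing the normal form inside the abstract group \(G\) and using the transversality \(\stmap(U^+) \cap (\stmap(U^-) \rtimes \diag) = 1\). However, there is a genuine gap at the technical core. The step you describe as ``a unimodularity-type adjustment \(\dots\) possible because the stable rank of \(R/\Jac(R)\) is one'' is exactly the part that does not follow from linear stable rank, and ``the classical Bruhat big-cell decomposition'' does not apply off the shelf either: the permitted operations are not arbitrary row operations on the column \(\alpha(g)e_{-1}\) but transvections and dilations preserving the hermitian and quadratic forms, and the column has a component \(e_0\,\alpha(g)\,e_{-1}\) landing in the anisotropic part outside the hyperbolic family, which can only be cleared through the odd form parameter \(\Delta\). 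The paper's proof of this step (done for \(n = 2\), the general case following by root elimination and induction) passes to the semisimple \emph{special} quotient (killing both \(\Jac(R)\) and the kernel of \((\pi,\rho)\)), splits the hyperbolic pair into primitive idempotents \(\widetilde\eta_1, \dots, \widetilde\eta_N\), and runs an induction on \(k\) through a four-case analysis (\(v_0, \dots, v_3\)); the final case uses an ultrashort transvection \(T_2(u \cdot e_{\widetilde\eta_{k+1}})\) with \(u \in \Delta\), and one intermediate case needs separate treatment when the residue division ring is \(\mathbb F_2\). None of this is ``classical stable-rank-one manipulation,'' and your proposal names the obstacle without supplying the argument.

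A secondary gap concerns injectivity of \(\tau\): you assert that every word in the generators can be rewritten into the form \(u_1^+ u_1^- u_2^+ d\) by steps each of which is a rank-two relation. The paper instead shows that the candidate normal-form set \(G'\) is closed under right multiplication by \(f(U^+(R,\Delta;\Phi/\mathrm e_1))\) and by \(\diag(R,\Delta;\Phi/\mathrm e_1)\), and is stable under the Weyl group; the latter uses the Gauss decomposition of the rank-two unitary subgroup \(\unit_{|\eta_3 \oplus \dots \oplus \eta_n|'}\) for the transposition of \(\mathrm e_1\) and \(\mathrm e_2\), and the linear Gauss decomposition for \((e_i + e_{i+1}) R (e_i + e_{i+1})\) for the remaining transpositions. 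Your observation that every step involves at most two hyperbolic-pair indices is true in spirit precisely because these auxiliary decompositions are rank-two statements, but it is not by itself a proof that a terminating rewriting procedure exists; what has to be exhibited is the closure of \(G'\) under multiplication by the negative generators, and that is where the work lies.
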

\begin{proof}
We prove Gauss decomposition for \(n = 2\), the general case then follows by elimination of \(\mathrm e_1\) and induction. Let \(g \in \unit(R, \Delta)\) be any element. At first suppose that \(e_{-2} \alpha(g) e_{-2}\) is invertible as an element of \(R_{-2, -2}\). Then there is \(u \in \Delta^{|2|'}_{-2}\) such that \(T^{-\eta_2}(\dotminus u)\, g \in P_{\eta_2}\) by lemma \ref{parabolic-extraction}, so we are done.

In the general case we have to find \(h \in T^{\eta_2}(*) \rtimes D_2(*)\) and \(h' \in D_2(*)\) such that \(e_{-2} \alpha(hgh') e_{-2} \in R_{-2, -2}^*\). This may be done modulo the Jacobson radical of \(R\) and the kernel of \((\pi, \rho) \colon \Delta \to R \times R\), hence we may assume that \(R\) is semi-simple and \((R, \Delta)\) is special unital. Now it is possible split the second hyperbolic pair: \(\eta_2 = \bigoplus_{p = 1}^N \widetilde \eta_p\) such that all the idempotents \(e_{\pm \widetilde \eta_p}\) are primitive. Suppose that there are \(h_k \in T^{\eta_2}(*) \rtimes D_2(*)\) and \(h'_k \in D_2(*)\) such that \(\eps_{-k} \alpha(h_k g h'_k) \eps_{-k}\) is invertible in \(\eps_{-k} R \eps_{-k}\) for some \(k\), where \(\eps_{\pm k} = e_{\pm \widetilde \eta_1} + \ldots + e_{\pm \widetilde \eta_k}\) (for \(k = 0\) we may take \(h_0 = 1\)). We want to show that there is \(h_{k + 1}\) with the same property if \(k < N\), then by induction such \(h_k\) and \(h'_k\) exist for all \(k\) and we may take \(h = h_N\), \(h' = h'_N\).

Without loss of generality, \(e_{-2} \alpha(h_k g h'_k) \eps_{-k} = \eps_{-k} = \eps_{-k} \alpha(h_k g h'_k) e_{-2}\). We consider several cases, in all of them we take \(h'_{k + 1} = h'_k\). If \(v_0 = e_{-\widetilde \eta_{k + 1}} \alpha(h_k g h'_k) e_{-\widetilde \eta_{k + 1}} \neq 0\), we may take \(h_{k + 1} = h_k\). If \(v_0 = 0\) and \(v_1 = (1 - e_0 - \eps_{-k} - \eps_{k + 1}) \alpha(h_k g h'_k) e_{-\widetilde \eta_{k + 1}} \neq 0\), then we may take \(h_{k + 1} = T_{-2, \pm 1}(x)\, h_k\) or \(h_{k + 1} = D_2(x)\, h_k\) for an appropriate \(x\). If \(v_1 = 0\) and \(v_2 = (1 - e_0 - \eps_{-k} - \eps_k) \alpha(h_k g h'_k) e_{-\widetilde \eta_{k + 1}} \neq 0\), then take \(h_{k + 1} = T_{-2, \pm 1}(e_{-\widetilde \eta_{k + 1}} x)\, T_{\pm 1, 2}(y e_{\widetilde \eta_{k + 1}})\, h_k\) for appropriate \(x, y\). Assume that \(v_2 = 0\) and \(v_3 = (1 - e_0 - \eps_{-k}) \alpha(h_k g h'_k) e_{-\widetilde \eta_{k + 1}} \neq 0\). In this case we take \(h_{k + 1} = T_2(\phi(e_{-\widetilde \eta_{k + 1}} x e_{\widetilde \eta_p}))\, T_{-2, \pm 1}(e_{-\widetilde \eta_{k + 1}} y)\, T_{\pm 1, 2}(z e_{\widetilde \eta_p})\, T_{\mp 1, 2}(w e_{\widetilde \eta_{k + 1}})\, h_k\) for appropriate \(x, y, z, w\) and some \(p \leq k\) (if the division ring \(R_{\widetilde \eta_{k + 1} \widetilde \eta_{k + 1}}\) contains more than two elements, then we may take \(y = z = w = 0\)). Finally, if \(v_3 = 0\), then \(e_{-\widetilde \eta_{k + 1}} \inv{\pi(\gamma(h_k g h'_k))} e_0 \alpha(h_k g h'_k) e_{-\widetilde \eta_{k + 1}} \neq 0\) and we take \(h_{k + 1} = T_2(u \cdot e_{\widetilde \eta_{k + 1}})\, h_k\) for some \(u\).

Now we prove the second claim. Note that there is a natural map \(f \colon \stunit(R, \Delta) \to G\). We show that
\[G = f(U^+(R, \Delta; \Phi / \mathrm e_1))\, f(U^-(R, \Delta; \Phi / \mathrm e_1))\, f(U^+(R, \Delta; \Phi / \mathrm e_1))\, \diag(R, \Delta; \Phi / \mathrm e_1).\]
Denote the right hand side by \(G'\), it is a subset of \(G\). Clearly, \(G'\) is closed under multiplication by \(f(U^+(R, \Delta; \Phi / \mathrm e_1))\) and \(\diag(R, \Delta; \Phi / \mathrm e_1)\) from the right: the conjugation relations for \(T_{ij}(*)\) and \(\diag(R, \Delta; \Phi / \mathrm e_k)\) easily follow from the other relations. We show that \(G'\) is stable under the action of the Weyl group (i.e. it is preserved under permutations of \(\eta_i\) and sign changes), then obviously \(G = G'\). By definition, \(G'\) is stable under the reflection \(\mathrm e_1 \mapsto -\mathrm e_1\). By Gauss decomposition for \(\unit_{|\eta_3 \oplus \ldots \oplus \eta_n|'}\), \(G'\) is stable under the transposition \((e_1, e_2) \mapsto (e_2, e_1)\).

Recall that Gauss decomposition holds for isotropic linear groups (for example, see lemma \(6\) in \cite{LinK2}) even without Morita equivalence of the idempotents, unlike the unitary version. Using this decomposition for \((e_i + e_{i + 1}) R (e_i + e_{i + 1})\), it is easy to see that \(G'\) is stable under all transpositions \((\mathrm e_i, \mathrm e_{i + 1}) \mapsto (\mathrm e_{i + 1}, \mathrm e_i)\) for \(i > 1\). The whole Weyl group is generated by transpositions of adjacent indices and the reflection \(\mathrm e_1 \mapsto -\mathrm e_1\).

From lemma \ref{maximal-parabolic} it follows by induction that
\[\stmap(U^-(R, \Delta)) \cap \diag(R, \Delta) = \stmap(U^+(R, \Delta)) \cap \bigl(\stmap(U^-(R, \Delta)) \rtimes \diag(R, \Delta)\bigr) = 1\]
for any orthogonal hyperbolic system (even without Morita equivalence). Now if \(g \in G\) has trivial image in \(\unit(R, \Delta)\), then the factor from \(f(U^-(R, \Delta; \Phi / \mathrm e_1))\) is trivial. Hence \(g \in f(U^+(R, \Delta; \Phi / \mathrm e_1)) \diag(R, \Delta; \Phi / \mathrm e_1)\), and this group is clearly isomorphic to its image in \(\unit(R, \Delta)\).
\end{proof}

We are ready to construct an action of \(\unit(S^{-1} R, S^{-1} \Delta)\) on \(\stunit^{(\infty, S)}(R, \Delta)\) in the interesting cases.

\begin{theorem}\label{local-action}
Let \(K\) be a commutative ring, \((R, \Delta)\) be an odd form \(K\)-algebra with an orthogonal hyperbolic family of rank \(n\), \(S \subseteq K\) be a multiplicative subset. Suppose that \(n \geq 4\) or \(n \geq 3\) and the orthogonal hyperbolic family is strong. Suppose also that \(S^{-1} R\) is semi-local or the orthogonal hyperbolic family is free and \(\Lambda\mathrm{sr}(\eta_1; S^{-1} R, S^{-1} \Delta) \leq n - 2\). Then \(\unit(S^{-1} R, S^{-1} \Delta)\) acts on \(\stunit^{(\infty, S)}(R, \Delta)\) making \(\stmap\) equivariant, this action is the usual one on every \(\diag(S^{-1} R, S^{-1} \Delta; \Phi / \alpha)\).
\end{theorem}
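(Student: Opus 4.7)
The paragraph preceding the theorem has already supplied most of the infrastructure: the semidirect product $\stunit(S^{-1}R, S^{-1}\Delta) \rtimes \diag(S^{-1}R, S^{-1}\Delta)$ acts on $\stunit^{(\infty,S)}(R, \Delta)$, every $\diag(S^{-1}R, S^{-1}\Delta; \Phi/\alpha)$ acts as well, all making $\stmap$ equivariant, and whenever $\Phi/\{\alpha, \beta\}$ is defined, the elements of $\diag(\cdot; \Phi/\alpha) \ast \diag(\cdot; \Phi/\beta)$ with trivial image in $\unit(S^{-1}R, S^{-1}\Delta)$ act trivially. The remaining task is to package these partial actions into a single action of $\unit(S^{-1}R, S^{-1}\Delta)$.

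In the semi-local case, the plan is to invoke Proposition \ref{gauss} applied to $(S^{-1}R, S^{-1}\Delta)$: it presents $\unit(S^{-1}R, S^{-1}\Delta)$ by the generators $T_{ij}(*)$ together with the diagonal subgroups $\diag(S^{-1}R, S^{-1}\Delta; \Phi/\mathrm e_k)$, modulo the Steinberg relations and the trivial-image relations of two-group free products of these diagonals. Every generator already acts on $\stunit^{(\infty,S)}(R, \Delta)$ by the preceding setup; the Steinberg relations hold by $\stmap$-equivariance and the universal construction of $\stunit^{(\infty,S)}$; and the trivial-image relations are handled by the two-group criterion recalled above, noting that $\Phi/\{\mathrm e_i, \mathrm e_j\}$ is defined for any pair of distinct ultrashort roots (they are non-long, non-collinear, not orthogonal short), and that each $T_{\pm i, \pm j}(*)$ of short-root type embeds in $\diag(\cdot; \Phi/\alpha)$ for the corresponding short $\alpha = \mathrm e_j - \mathrm e_i$. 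Thus the rule descends to a well-defined homomorphism $\unit(S^{-1}R, S^{-1}\Delta) \to \Aut(\stunit^{(\infty,S)}(R, \Delta))$, which is clearly $\stmap$-equivariant and restricts to the preassigned action on each $\diag(S^{-1}R, S^{-1}\Delta; \Phi/\alpha)$.

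In the stable rank case, surjective stability provides the missing handle. Proposition \ref{surjective-k1} gives a decomposition $\unit(S^{-1}R, S^{-1}\Delta) = \eunit(S^{-1}R, S^{-1}\Delta) \cdot \unit_{|\eta_n|'}(S^{-1}R, S^{-1}\Delta)$, and Proposition \ref{surjective-k2} identifies $\eunit \cap \unit_{|\eta_n|'}$ with $\eunit_{|\eta_n|'}$ (both hypotheses hold since $\Lambda\mathrm{sr}(\eta_1; S^{-1}R, S^{-1}\Delta) \leq n-2$ forces $\mathrm{sr}((S^{-1}R)_{11}) \leq n-2$). The $\eunit$-factor acts via the established $\stunit$-action; for $h \in \unit_{|\eta_n|'}$ the plan is to define the action on each Steinberg generator $X_{ij}, X_j$ of $\stunit^{(\infty,S)}(R, \Delta)$ by the natural automorphism formulas generalising (Ad1)--(Ad6) to arbitrary elements of $\unit_{|\eta_n|'}$ (well-defined because $h$ fixes $e_{\pm n}$ and $q_{\pm n}$), then invoke lemma \ref{steinberg-presentation} to conclude (St0)--(St8) are preserved, since $h$ is an automorphism of the odd form algebra $(S^{-1}R, S^{-1}\Delta)$. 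Compatibility on the overlap $\eunit_{|\eta_n|'}$ is checked directly on the generators $X_{ij}, X_j$ with $|i|, |j| < n$, whereupon well-definedness of the rule $eh \mapsto \up{e}{(\up{h}{(\cdot)})}$ is routine: any two factorisations of the same element differ by a factor in $\eunit_{|\eta_n|'}$, on which the two actions coincide. The hardest step will be writing down the explicit formulas for the action of $h \in \unit_{|\eta_n|'}$ on every root subgroup of $\stunit^{(\infty,S)}(R, \Delta; \Phi)$, particularly on those parametrised by roots involving $\pm n$, and verifying they respect (St0)--(St8) at the pro-group level; this is analogous to the conjugation calculations behind lemma \ref{maximal-parabolic} but must be carried out for an arbitrary $h$ rather than for a single transvection.
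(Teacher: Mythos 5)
Your semi-local argument is the paper's own (proposition \ref{gauss} plus lemma \ref{root-generation}), so that half is fine. The stable-rank half, however, has a genuine gap at its central step. You propose to let an arbitrary \(h \in \unit_{|\eta_n|'}\) act on \emph{every} Steinberg generator ``by the natural automorphism formulas generalising (Ad1)--(Ad6)''. No such formulas exist for the generators \(X_{ij}(a)\) and \(X_j(u)\) with \(|i|, |j| < n\): such an \(h\) has arbitrary components \(e_j \beta(h) e_k\) for \(|j|, |k| \leq n - 1\), so it does not lie in \(P_{\eta_j}\) for \(|j| < n\), and \(\up h{T_{ij}(a)}\) is not a product of root elements in any canonical way. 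Lemma \ref{maximal-parabolic} only gives \(\up g{T^\eta(u)} = T^\eta(\gamma(g) \cdot \pi(u) \dotplus u)\) for \(g \in \unit_{|\eta|'}\), i.e.\ it handles exactly the root subgroups involving \(\pm n\) --- which are therefore the \emph{easy} ones, not, as you write, the hard ones. The roots not involving \(\pm n\) are where the plan breaks down, and producing an action of a non-elementary unitary element on those generators is precisely the difficulty the whole localization machinery is built to circumvent.

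The paper's route is structural rather than formulaic: it applies proposition \ref{surjective-k1} \emph{twice} (this is what the hypothesis \(\Lambda\mathrm{sr} \leq n - 2\), rather than \(n - 1\), is for) to generate \(\unit(S^{-1}R, S^{-1}\Delta)\) by \(\stmap(\stunit(S^{-1}R, S^{-1}\Delta))\) and \(\unit_{|S^{-1}\eta_{n-1} \oplus S^{-1}\eta_n|'}\). The latter group is the \(D_0\)-component of \(\diag(S^{-1}R, S^{-1}\Delta; \Phi/\{\mathrm e_1, \ldots, \mathrm e_{n-2}\})\), the diagonal subgroup of the rank-two residual family \(\{\eta_{n-1}, \eta_n\}\), where its action on the remaining generators \emph{is} given by (Ad2) and (Ad5); this action is then transported to \(\stunit^{(\infty, S)}(R, \Delta; \Phi)\) through the iterated root-elimination isomorphisms of propositions \ref{short-presentation} and \ref{ultrashort-presentation}. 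One cannot eliminate down to rank one (each elimination step requires the ambient rank to be at least \(3\)), which is why removing a single hyperbolic pair, as in your decomposition, is not enough. Finally, the overlap check is not ``routine'': the paper proves that every \((g, h)\) in the semidirect product with trivial image in \(\unit(S^{-1}R, S^{-1}\Delta)\) acts trivially, using injective stability of \(\kunit_1\) (a cited external result, not proposition \ref{surjective-k2}, which is the \(\kunit_2\)-statement you conflate with it) to put \(\stmap(g)\) in the lower elementary group, surjective stability of \(\kunit_2\) to lift \(g\), and surjectivity of the elimination morphisms (lemma \ref{root-generation}) to propagate triviality of the action; your sketch omits the \(\kunit_1\)-input and the \(\kunit_2\)-ambiguity in the factorisation entirely.
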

\begin{proof}
In the semi-local case this follows from proposition \ref{gauss} and lemma \ref{root-generation}. In the stable rank case note that \(\unit(S^{-1} R, S^{-1} \Delta)\) is generated by \(\unit_{|S^{-1} \eta_{n - 1} \oplus S^{-1} \eta_n|'} \leq \unit(S^{-1} R, S^{-1} \Delta)\) and \(\stunit(S^{-1} R, S^{-1} \Delta)\) by proposition \ref{surjective-k1}. The semi-direct product of these groups acts on \(\stunit^{(\infty, S)}(R, \Delta)\) by lemma \ref{root-generation} and propositions \ref{short-presentation}, \ref{ultrashort-presentation}. We have to show that every element \((g, h) \in \stunit(S^{-1} R, S^{-1} \Delta) \rtimes \unit_{|S^{-1} \eta_{n - 1} \oplus S^{-1} \eta_n|'}\) with trivial image in \(\unit(R, \Delta)\) acts trivially. By injective stability of \(\kunit_1\) and surjective stability of \(\kunit_2\) (proposition \ref{surjective-k2}) we may assume that \(g\) lies in the image of \(\stunit(S^{-1} R_{|\eta_n|', |\eta_n|'}, S^{-1} \Delta_{|\eta_n|'}^{|\eta_n|'})\). But then \((g, h)\) acts trivially on \(\stunit^{(\infty, S)}(R, \Delta; \Phi / \{\mathrm e_1, \ldots, \mathrm e_{n - 1}\})\), hence on \(\stunit^{(\infty, S)}(R, \Delta; \Phi)\) by lemma \ref{root-generation}.
\end{proof}

\section{Unitary Steinberg crossed module}

Now we prove the main results. Recall that the Steinberg group is perfect for \(n \geq 3\) by lemma \ref{perfect}.

\begin{theorem}\label{semilocal-crossed-module}
Let \((R, \Delta)\) be an odd form ring with an orthogonal hyperbolic family of rank \(n \geq 3\). Suppose that \(R\) is semi-local. Then there is a unique action of \(\unit(R, \Delta)\) on \(\stunit(R, \Delta)\) making \(\stmap\) a crossed module, it is consistent with the action of \(\stunit(R, \Delta) \rtimes \diag(R, \Delta)\).
\end{theorem}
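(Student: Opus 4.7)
The plan is to apply Theorem~\ref{local-action} with the trivial multiplicative subset $S = \{1\}$: since $S^{-1}R = R$ is semi-local by hypothesis, the theorem immediately produces an action of $\unit(R, \Delta)$ on $\stunit^{(\infty, \{1\})}(R, \Delta) = \stunit(R, \Delta)$ such that $\stmap$ is equivariant and each $\diag(R, \Delta; \Phi/\alpha)$ acts in the usual way. In particular the action restricts to the standard action of $\diag(R, \Delta) = \diag(R, \Delta; \Phi)$, which settles one half of consistency with the $\stunit(R, \Delta) \rtimes \diag(R, \Delta)$-action.

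What remains is the crossed module axiom $\up{\stmap(x)}{y} = \up{x}{y}$. By Lemma~\ref{steinberg-presentation} it is enough to verify this when $x$ is a Steinberg generator $X_\alpha(t)$. For $\alpha$ short or ultrashort, $T_\alpha(t) = \stmap(X_\alpha(t))$ already lies in $\diag(R, \Delta; \Phi/\alpha)$, and its action on $\stunit(R, \Delta; \Phi)$ is, by construction in Theorem~\ref{local-action}, the one transported along the root-elimination isomorphism $F_\alpha$ of Proposition~\ref{short-presentation} or~\ref{ultrashort-presentation} from the ordinary diagonal action on the smaller Steinberg group $\stunit(R, \Delta; \Phi/\alpha)$. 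I would compare this transported action with inner conjugation by $X_\alpha(t)$ on each root subgroup: using the explicit formulas for $F_\alpha$ to write down a Steinberg preimage of $T_\alpha(t)$ inside $\stunit(R, \Delta; \Phi/\alpha)$, and checking that conjugation by that preimage reproduces, through $F_\alpha$, the diagonal action formulas (Ad1)--(Ad6). For a long root $\alpha$, relations (St4) or (St5) let one write $X_\alpha(t)$ as a commutator $[X_\gamma(a), X_\delta(b)]$ with $\gamma, \delta$ non-long, so that the crossed module identity follows from the two already established cases together with the general identity $\up{g}{[y_1, y_2]} = [\up{g}{y_1}, \up{g}{y_2}]$.

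Uniqueness is the cleanest step. By Gauss decomposition (Proposition~\ref{gauss}), $\unit(R, \Delta)$ is generated by $\eunit(R, \Delta) = \stmap(\stunit(R, \Delta))$ together with $\diag(R, \Delta)$. Any crossed-module action consistent with the $\diag$-action is forced on $\diag(R, \Delta)$ by hypothesis and on $\eunit(R, \Delta)$ by the crossed module axiom, which prescribes $\up{\stmap(x)}{y} = \up{x}{y}$; the right-hand side is independent of the chosen Steinberg lift of $\stmap(x)$ because, in any crossed module, $\ker \stmap$ is central in $\stunit(R, \Delta)$, so two lifts differing by a central element induce the same inner automorphism.

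The main obstacle I anticipate is the root-by-root Steinberg computation for short and ultrashort $\alpha$: one has to pick a convenient preimage of $T_\alpha(t)$ inside $\stunit(R, \Delta; \Phi/\alpha)$ coming from the definition of $F_\alpha$, and then reconcile the resulting inner-conjugation formulas with (Ad1)--(Ad6) case by case. The analysis is finite thanks to the transitivity of the Weyl group on roots of a given length, but it is the most laborious step and it is what glues the construction of Theorem~\ref{local-action} to the crossed module conclusion.
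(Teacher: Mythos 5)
Your plan has two genuine gaps. First, a hypothesis mismatch: Theorem~\ref{local-action}, your main tool, assumes \(n \geq 4\) or (\(n \geq 3\) and the orthogonal hyperbolic family is strong), whereas Theorem~\ref{semilocal-crossed-module} is asserted for every family of rank \(n \geq 3\), strong or not. So invoking Theorem~\ref{local-action} with \(S = \{1\}\) simply does not cover the case \(n = 3\) with a non-strong family. The paper's proof is arranged precisely to avoid this: it uses only the ultrashort root eliminations (Proposition~\ref{ultrashort-presentation}, which needs just \(n \geq 3\)) to get actions of \(\stunit(R,\Delta)\) and of \(\diag(R,\Delta;\Phi/\alpha)\) for ultrashort \(\alpha\), and then glues them by means of the presentation of \(\unit(R,\Delta)\) in the second half of Proposition~\ref{gauss}, whose defining relations involve only the groups \(\diag(R,\Delta;\Phi/\mathrm e_i)\) and are killed by the second claim of Lemma~\ref{root-generation} with \(\beta\) ultrashort (again no strongness needed). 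The crossed-module identity \(\up{\stmap(x)}y = \up xy\) then comes out of the gluing itself rather than from a separate root-by-root verification, since the generators \(T_{ij}(a)\) and \(T_j(u)\) of that presentation are declared to act as conjugation by their Steinberg lifts and the relations guarantee this is consistent.

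Second, your uniqueness argument rests on the claim that \(\unit(R,\Delta)\) is generated by \(\eunit(R,\Delta)\) and \(\diag(R,\Delta)\). That is not what Proposition~\ref{gauss} gives: the Gauss decomposition produces \(\diag(R,\Delta;\Phi/\mathrm e_1)\) as the last factor, and this group contains the full unitary group \(\unit_{|\eta_2\oplus\cdots\oplus\eta_n|'}\) of a rank-one subalgebra, which in general is strictly larger than what \(\eunit(R,\Delta)\) and \(\diag(R,\Delta;\Phi)\) generate (there is no Gauss decomposition left to perform at rank one). So your argument pins the action down only on a subgroup. The correct and simpler argument, which is the one the paper uses, needs no generation statement at all: in any crossed module the kernel of \(\stmap\) is central in \(\stunit(R,\Delta)\), and \(\stunit(R,\Delta)\) is perfect for \(n \geq 3\) by Lemma~\ref{perfect}, so \(\stunit(R,\Delta) \to \eunit(R,\Delta)\) is a perfect central extension and, for each \(g \in \unit(R,\Delta)\), there is at most one automorphism of \(\stunit(R,\Delta)\) covering conjugation by \(g\) on \(\eunit(R,\Delta)\); equivariance of \(\stmap\) forces every crossed-module action to consist of such automorphisms.
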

\begin{proof}
Clearly, \(\stunit(R, \Delta)\) and \(\diag(R, \Delta; \Phi / \alpha)\) for all ultrashort \(\alpha\) act on \(\stunit(R, \Delta)\) making \(\stmap\) equivariant by proposition \ref{ultrashort-presentation}. By lemma \ref{root-generation} and proposition \ref{gauss}, these actions glue together to the required action of \(\unit(R, \Delta)\). Now \(\stunit(R, \Delta) \to \eunit(R, \Delta)\) is a central perfect extension, so the uniqueness follows from abstract group theory.
\end{proof}

The following lemma simplifies the arguments in the proof of our main result. For a maximal ideal \(\mathfrak m\) of \(K\) we use the index \((\infty, \mathfrak m)\) instead of \((\infty, K \setminus \mathfrak m)\).

\begin{lemma}\label{costalks}
Let \(K\) be a commutative ring, \((M, M_0)\) be a \(2\)-step nilpotent \(K\)-module, \(S \subseteq K\) be a multiplicative subset, and \(G^{(\infty)}\) be a pro-group. Then any morphism of pro-groups \(M^{(\infty, S)} \to G^{(\infty)}\) is uniquely determined by its compositions with \(M^{(\infty, \mathfrak m)} \to R^{(\infty, S)}\) for all maximal ideals \(\mathfrak m\) disjoint with \(S\).
\end{lemma}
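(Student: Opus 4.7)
The plan is to check equality of two morphisms $f_1, f_2 \colon M^{(\infty, S)} \to G^{(\infty)}$ of pro-groups under the assumption that their compositions with each costalk map $M^{(\infty, \mathfrak m)} \to M^{(\infty, S)}$ agree. A morphism into a pro-group is determined by its compositions with the projections onto each object $G_j$ of $G^{(\infty)}$, so I would fix such $j$ and pick $s \in S$ large enough that both $f_1$ and $f_2$ factor as group homomorphisms $\bar f_1, \bar f_2 \colon M^{(s)} \to G_j$. Equality of $f_1$ and $f_2$ at index $j$ then amounts to finding some $s' \in S$ such that $\bar f_1$ and $\bar f_2$ coincide after precomposition with the structure map $M^{(ss')} \to M^{(s)}$.

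The central object is the set
\[I = \bigl\{t \in K \bigm| \bar f_1 \text{ and } \bar f_2 \text{ agree on the image of the structure map } M^{(st)} \to M^{(s)}\bigr\}.\]
The first main step is to verify that $I$ is an ideal of $K$. Closure under scaling by $K$ is immediate from $m \cdot (kt) = (m \cdot k) \cdot t$ and $(kt) m' = t (km')$ applied to the two families of generators $m^{(st)}$ and $\iota({m'}^{(st)})$. Additive closure is where the 2-step nilpotent structure enters: the identity $m \cdot (t_1 + t_2) = m \cdot t_1 \dotplus t_1 t_2 \tau(m) \dotplus m \cdot t_2$ produces a cross term $(t_1 t_2 \tau(m))^{(s)}$ in $M_0^{(s)}$, which can be rewritten as $\iota\bigl((t_1 \cdot s t_2 \tau(m))^{(s)}\bigr)$ using the relation $m^{(s)} = \iota((sm)^{(s)})$ for $m \in M_0$; this element then lies in the equalizer of $\bar f_1$ and $\bar f_2$ by applying $t_1 \in I$ to the element $m' = s t_2 \tau(m) \in M_0$.

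The hypothesis that $f_1$ and $f_2$ agree on every costalk unwinds, via the formal projective limit $M^{(\infty, \mathfrak m)} = \varprojlim_{t \in K \setminus \mathfrak m}^{\Pro} M^{(t)}$, to the statement $I \not\subseteq \mathfrak m$ for every maximal ideal $\mathfrak m$ of $K$ disjoint from $S$. By the standard correspondence between such $\mathfrak m$ and the maximal ideals of $S^{-1}K$, the image of $I$ in $S^{-1}K$ is contained in no maximal ideal of $S^{-1}K$, so $S^{-1}I = S^{-1}K$. Writing $1 = i/r$ in $S^{-1}K$ with $i \in I$ and $r \in S$ yields some $r' \in S$ with $r' r = r' i \in I$, so $r'r \in I \cap S$ supplies the required $s'$ and finishes the proof.

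The main obstacle I anticipate is the additive closure of $I$, which requires carefully interleaving the defining relations of the homotope $M^{(s)}$ between the \emph{top} generators $m^{(s)}$ and the $\iota$-generators inherited from $M_0^{(s)}$, and keeping track of the cross term produced by the $K^\bullet$-action formula. The reduction at the start and the commutative-algebra step at the end are routine manipulations of pro-limits and ideals.
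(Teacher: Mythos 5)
Your proof is correct and takes essentially the same route as the paper's: reduce to an ordinary group \(G\) and representatives \(\bar f_1, \bar f_2 \colon M^{(s)} \to G\), form an ideal of scalars \(t\) on which the two maps agree after passing to \(M^{(st)}\), observe that the costalk hypothesis says this ideal lies in no maximal ideal disjoint from \(S\), and conclude that it meets \(S\). The only difference is organizational: the paper runs the argument in two passes (first an ideal \(\mathfrak a\) for the \(M_0^{(s)}\)-part, then an ideal \(\mathfrak b\) for \(M^{(s)} \cdot k\), using the first pass to absorb the cross term \(kk'\tau(m)\)), whereas you fold both into a single ideal \(I\) and absorb the cross term inside the verification that \(I\) is additively closed.
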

\begin{proof}
Without loss of generality, \(G\) is an ordinary group. Let \(f_1, f_2 \colon M^{(s)} \to G\) be two homomorphisms for some \(s \in S\) such that \(f_1\bigl(m^{(\infty)}\bigr) = f_2\bigl(m^{(\infty)}\bigr)\) for \(m^{(\infty)} \in M^{(\infty, \mathfrak m)}\), where \(\mathfrak m\) runs over maximal ideals of \(K\) disjoint with \(S\). Then the set
\[\mathfrak a = \{k \in K \mid f_1\bigl(\iota(k M_0^{(s)})\bigr) = f_2\bigl(\iota(k M_0^{(s)})\bigr)\}\]
is an ideal of \(K\). It is not contained in any maximal ideal \(\mathfrak m\) disjoint with \(S\) by assumption, hence \(S^{-1} \mathfrak a = S^{-1} K\). It follows that \(\mathfrak a\) intersects \(S\), i.e. \(f_1\) and \(f_2\) are equal after a restriction to \(M_0^{(ss')}\) for some \(s' \in S\).

Without loss of generality, \(f_1\) and \(f_2\) coincide on \(M_0^{(s)}\). Then the set
\[\mathfrak b = \{k \in K \mid f_1\bigl(M^{(s)} \cdot k\bigr) = f_2\bigl(M^{(s)} \cdot k\bigr)\}\]
is also an ideal of \(K\). By the reason as above, it intersects \(S\), so \(f_1\) and \(f_2\) are equal after a restriction to some \(M^{(ss')}\) for some \(s' \in S\).
\end{proof}

\begin{theorem}\label{steinberg-crossed-module}
Let \(K\) be a commutative ring, \((R, \Delta)\) be an odd form \(K\)-algebra with an orthogonal hyperbolic family of rank \(n\). Suppose that \(n \geq 4\) or \(n \geq 3\) and the orthogonal hyperbolic family is strong. Suppose also that \(R\) is quasi-finite over \(K\) or \(\Lambda\mathrm{lsr}(\eta_1, R, \Delta) \leq n - 2\). Then there is a unique action of \(\unit(R, \Delta)\) on \(\stunit(R, \Delta)\) making \(\stmap\) a crossed module, it is consistent with the action of \(\stunit(R, \Delta) \rtimes \diag(R, \Delta)\) and with the actions.
\end{theorem}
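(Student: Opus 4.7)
\emph{Uniqueness} is formal. By lemma~\ref{perfect} the group \(\stunit(R,\Delta)\) is perfect since \(n\geq 3\); in any crossed module lifting \(\stmap\) the kernel \(\kunit_2(R,\Delta)\) is central, so \(\stmap\) exhibits \(\stunit(R,\Delta)\) as a perfect central extension of \(\eunit(R,\Delta)\). Two crossed-module actions of \(\unit(R,\Delta)\) on \(\stunit(R,\Delta)\) must both induce the conjugation action on \(\eunit(R,\Delta)\), and actions lift uniquely through perfect central extensions. Consistency with \(\stunit(R,\Delta)\rtimes\diag(R,\Delta)\) is automatic since on this subgroup both actions are given by the standard formulas (Ad1)--(Ad6) and conjugation.

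\emph{Existence} proceeds by localisation and gluing. First note that if \(R\) is quasi-finite over \(K\) then \(\Lambda\mathrm{lsr}(\eta_1,R,\Delta)\leq 1\leq n-2\), by the remark preceding proposition~\ref{surjective-k1}, so we may assume \(\Lambda\mathrm{lsr}(\eta_1,R,\Delta)\leq n-2\) in all cases. For each maximal ideal \(\mathfrak m\) of \(K\) put \(S_{\mathfrak m}=K\setminus\mathfrak m\); theorem~\ref{local-action} provides an action of \(\unit(R_{\mathfrak m},\Delta_{\mathfrak m})\) on the pro-group \(\stunit^{(\infty,\mathfrak m)}(R,\Delta)\) making \(\stmap\) equivariant and extending the usual action of \(\stunit(R_{\mathfrak m},\Delta_{\mathfrak m})\rtimes\diag(R_{\mathfrak m},\Delta_{\mathfrak m})\). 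Pulling back along \(\unit(R,\Delta)\to\unit(R_{\mathfrak m},\Delta_{\mathfrak m})\), \(\unit(R,\Delta)\) acts on each \(\stunit^{(\infty,\mathfrak m)}(R,\Delta)\), compatibly with the canonical morphism \(\iota_{\mathfrak m}\colon\stunit(R,\Delta)\to\stunit^{(\infty,\mathfrak m)}(R,\Delta)\) on the subgroup \(\stunit(R,\Delta)\rtimes\diag(R,\Delta)\).

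For each \(g\in\unit(R,\Delta)\) I construct an endomorphism \(\phi_g\) of \(\stunit(R,\Delta)\). By lemma~\ref{steinberg-presentation} it suffices to produce, for every root \(\beta\in\Phi\), a map \(\phi_g^{\beta}\colon M_{\beta}\to\stunit(R,\Delta)\) from the argument module \(M_{\beta}\) (equal to \(R_{ij}\) or \(\Delta_j^0\)) satisfying (St0)--(St8). The compositions \(a\mapsto\up g{X_{\beta}(a^{(1,\mathfrak m)})}\) give a coherent family of pro-set morphisms \(M_{\beta}\to\stunit^{(\infty,\mathfrak m)}(R,\Delta)\) indexed by \(\mathfrak m\). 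Uniqueness of a common abstract lift \(\phi_g^{\beta}\) follows from lemma~\ref{costalks} applied to \(M_{\beta}\). For existence I decompose \(g_{\mathfrak m}\) locally via Gauss decomposition (proposition~\ref{gauss}), using surjective \(\kunit_1\)-stability (proposition~\ref{surjective-k1}) to reduce to a finite word in elements of \(\stunit(R,\Delta)\rtimes\diag(R,\Delta;\Phi/\alpha)\) for various \(\alpha\); each factor acts on Steinberg generators by the explicit formulas (Ad1)--(Ad6) or by conjugation in \(\stunit(R,\Delta)\), all of which descend globally to \(\stunit(R,\Delta)\). Independence from the chosen factorisation is checked locally and then transferred back via lemma~\ref{costalks}.

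\emph{Main obstacle and remaining verifications.} The principal difficulty is exactly this construction step: given \(g\in\unit(R,\Delta)\) and a root generator \(X_{\beta}(a)\), one must exhibit \(\up g{X_{\beta}(a)}\) as a concrete element of \(\stunit(R,\Delta)\) rather than merely as coherent data in the pro-groups \(\stunit^{(\infty,\mathfrak m)}(R,\Delta)\); this requires careful control of denominators provided by the homotope formalism of section~3 and by lemma~\ref{costalks}. Once \(\phi_g\) is defined, the remaining verifications — multiplicativity \(\phi_{gh}=\phi_g\phi_h\), equivariance \(\stmap\circ\phi_g=\Ad_g\circ\stmap\), and the crossed-module axiom \(\phi_{\stmap(h)}=\Ad_h\) for \(h\in\stunit(R,\Delta)\) — all reduce through lemma~\ref{costalks} to their local counterparts, which follow from theorem~\ref{local-action}. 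The consistency claims with the action of \(\stunit(R,\Delta)\rtimes\diag(R,\Delta)\) and with the local actions of \(\unit(R_{\mathfrak m},\Delta_{\mathfrak m})\) hold by construction.
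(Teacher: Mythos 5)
Your uniqueness argument and the overall localisation skeleton match the paper, but the core of the existence proof is missing, and the sketch you give for it would not work as stated. The factors produced by Gauss decomposition and by surjective \(\kunit_1\)-stability for \(g_{\mathfrak m}\) live in \(\stunit(R_{\mathfrak m},\Delta_{\mathfrak m})\rtimes\diag(R_{\mathfrak m},\Delta_{\mathfrak m};\Phi/\alpha)\): their entries have denominators, so they act on the pro-group \(\stunit^{(\infty,\mathfrak m)}(R,\Delta)\) but not on \(\stunit(R,\Delta)\) itself, and their action on a generator \(X_\beta(a)\) with \(a\in M_\beta\) does not ``descend globally''. Composing with \(\pi_{\stmap}\colon\stunit^{(\infty,\mathfrak m)}(R,\Delta)\to\stunit(R,\Delta)\) only determines the would-be map \(\phi_g^\beta\) on the image of \(\pi_{\mathrm{alg}}\), i.e.\ up to denominators from \(K\setminus\mathfrak m\); lemma \ref{costalks} then gives uniqueness of a common lift but says nothing about its existence, which is exactly the ``principal difficulty'' you name and then leave unresolved.

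The paper closes this gap with a specific mechanism you would need to reproduce. First it proves (using theorem \ref{local-action} and lemma \ref{costalks}) that \(\kunit_2(R,\Delta)\) is central in \(\stunit(R,\Delta)\) and that \(\eunit(R,\Delta)\) is normal in \(\unit(R,\Delta)\) --- neither of which you establish, and the latter is needed even to make sense of \(\up g{T_\beta(a)}\) as an element of \(\eunit(R,\Delta)\). It then sets \(\mathcal Y_\beta(a)=\stmap^{-1}(\up g{T_\beta(a)})\), a coset of the central subgroup \(\kunit_2(R,\Delta)\); commutators of such cosets are single elements, so expressions like \([\mathcal Y_{i,\pm k}(a),\mathcal Y_{\pm k,j}(b)]\) are honest elements of \(\stunit(R,\Delta)\) defined for all \(a,b\), with no denominators. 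Lemmas \ref{ring-presentation} and \ref{form-presentation}, applied with \(S=\{1\}\), then glue these into genuine homomorphisms \(Y_{ij}\) and \(Y_i\), whose hypotheses and the remaining Steinberg relations are verified locally via theorem \ref{local-action} and transferred back by lemma \ref{costalks}. Without this coset/commutator construction (or an equivalent device) your \(\phi_g^\beta\) is never actually defined on all of \(M_\beta\), so the existence half of the proof is incomplete.
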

\begin{proof}
Note that the quasi-finite case easily reduces to the case when \(R\) is finite \(K\)-algebra. First of all, we show that any \(g \in \kunit_2(R, \Delta)\) lies in the center of \(\stunit(R, \Delta)\). Note that the conjugation by \(g\) and the identity are the same automorphism on \(\stunit^{(\infty, \mathfrak m)}(R, \Delta)\) for every maximal ideal \(\mathfrak m\) of \(R\) by theorem \ref{local-action}. Hence \(g\) stabilizes each root subgroup by lemma \ref{costalks}. In other words, \(\stunit(R, \Delta)\) is a perfect central extension of \(\eunit(R, \Delta)\).

Next, we show that \(\eunit(R, \Delta)\) is normalized by any \(g \in \unit(R, \Delta)\) (in the matrix case this is the main result of \cite{OddDefPetrov}). We use a variant of the argument used in the proof of lemma \ref{costalks}. For any non-zero indices \(i \neq \pm j\) let
\begin{align*}
\mathfrak a &= \{k \in K \mid \up g{T_{ij}(kR_{ij})} \leq \eunit(R, \Delta)\},
\end{align*}
it is an ideal. By theorem \ref{local-action}, it is not contained in any maximal ideal of \(K\), hence \(\mathfrak a = K\). By the same argument (but with two ideals), \(\up g{T_j(*)} \leq \eunit(R, \Delta)\).

It remains to prove that for each \(g \in \unit(R, \Delta)\) there is an endomorphism \(\stunit(R, \Delta) \to \stunit(R, \Delta), X_{ij}(a) \mapsto \up g{X_{ij}(a)}, X_j(u) \mapsto \up g{X_j(u)}\) making \(\stmap\) equivariant. Note that since \(\stunit(R, \Delta) \to \eunit(R, \Delta)\) is a central perfect extension, such endomorphisms are unique for all \(g\) (if they exist) and they are multiplicative on \(g\). We are going to show that such an endomorphism exists and, moreover, the morphisms \(\stunit^{(\infty, \mathfrak m)}(R, \Delta) \to \stunit(R, \Delta)\) of pro-groups are equivariant under the action of \(g\) for every maximal ideal \(\mathfrak m\) of \(K\).

Let \(\mathcal Y_{ij}(a) = \stmap^{-1}(\up g{T_{ij}(a)})\) and \(\mathcal Y_j(u) = \stmap^{-1}(\up g{T_j(u)})\), they are certain cosets of \(\kunit_2(R, \Delta)\). Clearly, \(\mathcal Y_{ij}(a + b) = \mathcal Y_{ij}(a)\, \mathcal Y_{ij}(b)\) and \(\mathcal Y_j(u \dotplus v) = \mathcal Y_j(u)\, \mathcal Y_j(v)\). Note that all commutators of these cosets are one-element sets, so we may consider them as elements of the Steinberg group. Also for every maximal ideal \(\mathfrak m\) of \(K\) we have
\begin{align*}
\pi_{\stmap}\bigl(\up g{X_{ij}\bigl(a^{(\infty)}\bigr)}\bigr) &\in \mathcal Y_{ij}\bigl(\pi_{\mathrm{alg}}\bigl(a^{(\infty)}\bigr)\bigr),\\
\pi_{\stmap}\bigl(\up g{X_j\bigl(u^{(\infty)}\bigr)}\bigr) &\in \mathcal Y_j\bigl(\pi_{\mathrm{alg}}\bigl(u^{(\infty)}\bigr)\bigr)
\end{align*}
in the obvious sense (the right hand sides are morphisms from pro-groups to the set of subsetes of \(\stunit(R, \Delta)\)), where \(\pi_{\stmap} \colon \stunit^{(\infty, \mathfrak m)}(R, \Delta) \to \stunit(R, \Delta)\) and \(\pi_{\mathrm{alg}} \colon (R^{(\infty, \mathfrak m)}, \Delta^{(\infty, \mathfrak m)}) \to (R, \Delta)\) are the canonical morphisms.

Consider non-zero indices \(i, j, k, l\) such that \(i \neq l \neq -j \neq -k \neq i\). For any \(a \in R_{ij}\) consider
\[\mathfrak b = \{k \in K \mid [\mathcal Y_{ij}(a), \mathcal Y_{kl}(kR_{kl})] = 1\},\]
it is an ideal of \(K\). Since
\[\bigl[\mathcal Y_{ij}(a), \mathcal Y_{kl}\bigl(\pi_{\mathrm{alg}}\bigl(b^{(\infty)}\bigr)\bigr)\bigr] = \pi_{\stmap}\bigl(\up{g X_{ij}(a)}{X_{kl}\bigl(b^{(\infty)}\bigr)}\, \up g{X_{kl}\bigl(-b^{(\infty)}\bigr)}\bigr) = 1\]
for \(b^{(\infty)} \in R_{kl}^{(\infty, \mathfrak m)}\) and a maximal ideal \(\mathfrak m\) of \(K\), by theorem \ref{local-action} this ideal equals \(K\). In other words, the maps \(\mathcal Y_\alpha\) satisfy (St3). Similarly, they satisfy (St7).

Let \(i \neq \pm j\) be non-zero indices and choose an index \(k \neq 0, \pm i, \pm j\). The maps
\[Y_{ij}^{\pm k} \colon R_{i, \pm k} \times R_{\pm k, j} \to \stunit(R, \Delta), (a, b) \mapsto [\mathcal Y_{i, \pm k}(a), \mathcal Y_{\pm k, j}(b)] \in \mathcal Y_{ij}(ab)\]
satisfy all the axioms from lemma \ref{ring-presentation}: the first three are now obvious and the last one follows from lemma \ref{costalks} if we fix any two parameters and let the third vary. Hence there is a well-defined homomorphism \(Y_{ij} \colon R_{ij} \to \stunit(R, \Delta)\) restricting to \(Y_{ij}^{\pm k}\) and such that
\[Y_{ij}\bigl(\pi_{\mathrm{alg}}\bigl(a^{(\infty)}\bigr)\bigr) = \pi_{\stmap}\bigl(\up g{X_{ij}\bigl(a^{(\infty)}\bigr)}\bigr)\]
for \(a^{(\infty)} \in R_{ij}^{(\infty, \mathfrak m)}\) and any maximal ideal \(\mathfrak m\) of \(K\) (by lemma \ref{ring-generation}). It is easy to see using lemma \ref{costalks} that they satisfy (St0) and (St4).

Now let \(i \neq 0\) and choose \(j \neq 0, \pm i\). Similarly to the short root case, there exists a homomorphism \(Y_{-i, i} \colon R_{-i, i} \to \stunit(R, \Delta)\) such that
\[Y_{-i, i}\bigl(\pi_{\mathrm{alg}}\bigl(a^{(\infty)}\bigr)\bigr) = \pi_{\stmap}\bigl(\up g{X_i\bigl(\phi(a^{(\infty)})\bigr)}\bigr)\]
for \(a^{(\infty)} \in R_{-i, i}^{(\infty, \mathfrak m)}\) and any maximal ideal \(\mathfrak m\) of \(K\), it satisfies (St5) and the identity \(Y_{-i, i}(a) = Y_{-i, i}(-\inv a)\). The maps
\[Y_i^{\pm j} \colon \Delta^0_{\pm j} \times R_{\pm j, i} \to \stunit(R, \Delta), (u, b) \mapsto Y_{\mp j, i}(\inv{\rho(u)} b)\, [\mathcal Y_{\pm j}(\dotminus u), Y_{\pm j, i}(-b)] \in \mathcal Y_i(u \cdot b)\]
together with \(Y_{-i, i}\) satisfy the axioms from lemma \ref{form-presentation} (the non-obvious axioms follow by lemma \ref{costalks} applied to any parameter). Hence there is a homomorphism \(Y_i \colon \Delta^0_i \to \stunit(R, \Delta)\) restricting to \(Y_i^{\pm j}\) and \(Y_{-i, i}\), and in addition
\[Y_i\bigl(\pi_{\mathrm{alg}}\bigl(u^{(\infty)}\bigr)\bigr) = \pi_{\stmap}\bigl(\up g{X_i\bigl(u^{(\infty)}\bigr)}\bigr)\]
for \(u^{(\infty)} \in \Delta^{0, (\infty, \mathfrak m)}_i\) and any maximal ideal \(\mathfrak m\) of \(K\) (by lemma \ref{form-generation}). By lemma \ref{costalks}, it satisfies the remaining Steinberg relations.
\end{proof}

\bibliographystyle{plain}  
\bibliography{references}

\end{document}